\documentclass[11pt]{amsart}
\usepackage{amssymb,amsmath, amsthm, mathrsfs, amscd, amssymb, enumitem}
\usepackage{thmtools}
\usepackage{hyperref}
\usepackage{cleveref,comment,xcolor}
\usepackage[margin=34mm]{geometry}

\hypersetup{hypertexnames=false}

\newtheorem{theorem}{Theorem}[section]
\newtheorem{lemma}[theorem]{Lemma}
\newtheorem{prop}[theorem]{Proposition}

\theoremstyle{definition}

\theoremstyle{remark}
\newtheorem{remark}[theorem]{Remark}

\numberwithin{equation}{section}

\hypersetup{
	colorlinks,
	linkcolor={red!25!black},
	citecolor={blue!25!black},
	urlcolor={blue!25!black}
}

\renewcommand{\hat}{\widehat}

\renewcommand{\S}{\mathbb{S}}

\newcommand{\R}{\mathbb{R}}

\newcommand{\Q}{\mathcal{Q}}
\newcommand{\bB}{\mathbb{B}}
\newcommand{\Z}{\mathbb{Z}}

\newcommand{\W}{\mathbb{W}}
\newcommand{\T}{\mathbb{T}}
\newcommand{\sq}{\square}

\newcommand{\Bb}{\mathcal{B}_{\operatorname{broad}}}
\newcommand{\Bn}{\mathcal{B}_{\operatorname{narrow}}}
\newcommand{\Yb}{Y_{\operatorname{broad}}}
\newcommand{\Yn}{Y_{\operatorname{narrow}}}

\newcommand{\les}{\lesssim}
\newcommand{\ges}{\gtrsim}
\newcommand{\less}{\lessapprox}
\newcommand{\eps}{\epsilon}
\newcommand{\la}{\lambda}

\begin{document}
	\title[Weighted refined decoupling with concentrated frequencies]{Weighted  decoupling with lower-dimensional frequency localization}	
	\author{Jongchon Kim}
	\address{Department of Mathematics, City University of Hong Kong, Hong Kong SAR, China}
	\email{jongckim@cityu.edu.hk}
	\keywords{Decoupling, weighted restriction, Falconer distance set problem}
	\subjclass{42B15, 42B37}
	\thanks{Supported in part by a grant from the Research Grants Council of the Hong Kong Administrative Region, China (Project No. CityU 11308924).}
	
	\begin{abstract} 
		We prove weighted $L^2$ and refined $L^p$ decoupling estimates for functions whose Fourier transforms are supported in a small neighborhood of the unit sphere or the truncated paraboloid  with an additional lower-dimensional frequency localization property. 	As a special case, we recover the fractal $L^2$ restriction estimate of Du and Zhang, with a sharper dependence on the density of the weight. We also derive weighted refined decoupling estimates related to the Falconer distance set problem, improving earlier results under the stronger assumption that the underlying weight is $\alpha$-dimensional at every scale.
	\end{abstract}
	
	\maketitle

\section{Introduction}
Let $S\subset\R^d$ be a compact strictly convex $C^2$ hypersurface with Gaussian curvature comparable to $1$, such as the unit sphere or the truncated paraboloid. 
We prove weighted decoupling estimates for functions whose Fourier transforms are supported in the $R^{-1}$-neighborhood $N_{R^{-1}}(S)$ of $S$, under an additional frequency localization assumption. This type of estimate was introduced by Du, Ou, Ren, and Zhang \cite{du2023weighted} in connection with the Falconer distance set problem. We obtain an improved bound under the stronger assumption that the underlying weight is $\alpha$-dimensional at every scale. Weighted variants of decoupling estimates have also appeared recently in other contexts; see, for example, \cite{GanWu, KimWeightedDec, CLPY}.

We first recall the wave packet setup. Let $\{ \theta\}$ denote a canonical covering of $N_{R^{-1}}(S)$ by parallelepipeds of dimensions $R^{-1/2} \times \cdots \times R^{-1/2} \times R^{-1}$. For each $\theta$, let $\T_\theta$ denote a collection of parallelepipeds dual to $\theta$ forming a partition of $\R^d$, each of dimensions $R^{1/2} \times \cdots \times R^{1/2} \times R$. Let $v(T)$ denote the long direction of $T\in \T_\theta$ orthogonal to $\theta$. A wave packet $f_T$, $T\in \T_\theta$, has Fourier support in a small dilate of $\theta$ and decays rapidly away from $T$. Any function with Fourier support in $N_{R^{-1}}(S)$ admits a decomposition into wave packets associated with $\T(R) = \cup_\theta \T_\theta$; see e.g. \cite[Appendix A.2]{decouplingstudy}.

Let $\W \subset \T(R)$. Following \cite{du2023weighted}, we say that a sum of wave packets $\sum_{T\in \W} f_T$  has \emph{$(r,m)$-concentrated frequencies}, for some $r\geq R^{-1/2}$ and $1\leq m\leq d$, if there exists an $m$-dimensional subspace $V\subset \R^d$ such that 
\[ \angle(v(T), V) \leq r,\;\; \text{for all } T \in \W.\]
This condition is vacuous when $m=d$ or  $r\geq \pi/2$.

For $\alpha \in [0,d]$ and a set $Y\subset \R^d$, define
\begin{align*}
	\gamma_{\alpha,R}(Y) =  \sup_{1\leq \rho \leq R} \sup_{B_\rho} \rho^{-\alpha} |Y\cap B_\rho|,
\end{align*}
where the second supremum is taken over all cubes $B_\rho\subset\R^d$ of side
length $\rho$, referred to as a $\rho$-cube. We also define
\[
 \overline{\gamma_{\alpha,R}}(Y) =  \sup_{R^{1/2} \leq \rho \leq R} \sup_{B_\rho} \rho^{-\alpha} |Y\cap B_\rho|,
\]  
which ignores scales below $R^{1/2}$. Additionally, we define 
\[ \mathcal{A}_{d,m,\alpha,\beta}(R, Y) =   \gamma_{\alpha,R^{1/2}}(Y) \overline{\gamma_{\beta,R}}(Y) R^{\beta}  
\sup_{B_{R^{1/2}}} |Y\cap B_{R^{1/2}}|^{m-1}	R^{-\frac{m(d+1)}{2}}. \]
This quantity records the density parameters of $Y$ that appear in the main estimates.
We denote by $A\les B$ an inequality $A\leq CB$ with an absolute constant $C$, and by $A\less B$ the estimate $A\leq C_\eps R^\eps B$ for every $\eps>0$. We can now state our main result.

\begin{theorem}\label{thm:LpWeightedDecouplingConcentrated}
	Let $2\leq m \leq d$. Let $f = \sum_{T \in \W} f_T$ be a sum of wave packets with $(R^{-1/2}, m)$-concentrated frequencies, and let $Y\subset B_R$ be a union of unit cubes.
	Suppose  $0\leq \alpha, \beta \leq d$ satisfy $\beta \geq \alpha +m - d$. 	Then
\[
		\| f \|_{L^2(Y)} \less \mathcal{A}_{d,m,\alpha,\beta}(R,Y)^{\frac{1}{m(m+1)}} 
		\| f \|_{L^2}.
\]
\end{theorem}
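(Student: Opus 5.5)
We argue by induction on scales, following the broad--narrow scheme of Du--Zhang's fractal $L^2$ restriction estimate, but run inside the $m$-dimensional frequency space. Since all directions $v(T)$ lie within $R^{-1/2}$ of the $m$-plane $V$, the relevant caps $\theta$ lie in an $R^{-1/2}$-neighborhood of an $(m-1)$-dimensional slice of $S$. We will therefore replace the $d$-dimensional multilinear restriction/refined Strichartz input with its $m$-dimensional analogue. The exponent $\frac{1}{m(m+1)}$ is the Du--Zhang exponent $\frac{1}{d(d+1)}$ with $d$ replaced by $m$, which supports this reduction.

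\emph{Step 1: the broad part.} We cover $B_R$ by $R^{1/2}$-cubes $B$ and decompose $f$ on each $B$ into caps $\tau$ of size $K^{-1}$. We call $B$ broad if the contributions of $m$ transverse caps are comparable, where transversality is measured inside $V$. On broad cubes we bound $\|f\|_{L^2(Y\cap B)}$ by H\"older from $L^2$ to $L^{2m/(m-1)}$-type multilinear norms, which produces the factor $|Y\cap B_{R^{1/2}}|^{\cdot}$. We then apply the $m$-linear refined Strichartz / multilinear Kakeya estimate in $m$ dimensions. Functions with concentrated frequencies are, after a rotation, essentially tensorial: they behave like $m$-dimensional solutions extended trivially in $d-m$ directions, up to the $R^{1/2}$ uncertainty scale. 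The multilinear estimate gives a gain with $R^{-m(d+1)/2}$-type normalization. We use $\gamma_{\alpha,R^{1/2}}(Y)$ to control $Y$ inside a single $R^{1/2}$-cube in the orthogonal directions. We use $\overline{\gamma_{\beta,R}}(Y)R^\beta$ to count how many $R^{1/2}$-cubes of $Y$ a single tube $T$ (or an $R$-slab in $V^\perp$) can meet. The hypothesis $\beta\ge \alpha+m-d$ ensures that the two density scales combine consistently when a tube of cross-section $R^{1/2}$ is fattened in the $d-m$ trivial directions. Interpolating these bounds with the trivial $L^2$ bound gives exactly $\mathcal{A}^{1/(m(m+1))}$, after optimizing the H\"older exponents.

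\emph{Step 2: the narrow part.} If $B$ is narrow, the dominant caps lie near an $(m-1)$-dimensional subspace of $V$. We apply lower-dimensional decoupling (Bourgain--Demeter in dimension $m-1$) to split $f$ on $B$ into $f_\tau$ at the cost of $K^{O(\epsilon)}$. We then rescale each $f_\tau$ by parabolic rescaling to scale $R/K^2$. Under this rescaling the concentrated-frequency condition is preserved, with $V$ mapped to another $m$-plane. We apply the induction hypothesis and check that $\mathcal{A}$ of the rescaled set is bounded by $K^{-c}\mathcal{A}(R,Y)$, so that summing over $\tau$ closes the induction by $L^2$ orthogonality.

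\emph{Main obstacle.} The hard part is the density bookkeeping under rescaling in the narrow case. The rescaled set $Y$ lives on anisotropic $K\times K^2$ boxes, and we must verify that the three density factors in $\mathcal{A}$ transform favorably. This is exactly where the constraint $\beta\ge\alpha+m-d$ should enter. The second, smaller difficulty is making the tensor/trivial-extension heuristic rigorous in the broad part. For that we would first try to average over translates in $V^\perp$ at scale $R^{1/2}$, then apply the $m$-dimensional multilinear restriction on each $m$-dimensional affine slice, and finally integrate with Fubini.
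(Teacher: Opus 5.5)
The narrow case has a genuine gap, in exactly the step you call bookkeeping. To reapply the hypothesis at scale $R_1=R/K^2$, the rescaled set must again be a union of unit cubes. So near a box $\square$ you must replace $Y$ by the union $Y_\square$ of the $K\times\cdots\times K\times K^2$ tubes $T_1$ (dual to $\tau$) that meet $Y$. If $|2T_1\cap Y|\sim\eta|T_1|$, this fattening inflates every density count by $\eta^{-1}$. One only has $\sup_{B_\rho}|L_\tau(Y_\square)\cap B_\rho|\lessapprox\eta^{-1}K^{-d}\sup_{B_{K\rho}}|Y\cap B_{K\rho}|$, hence $\mathcal{A}_{d,m,\alpha,\beta}(R_1,L_\tau(Y_\square))\lessapprox\eta^{-(m+1)}K^{\alpha+m-d-\beta}\mathcal{A}_{d,m,\alpha,\beta}(R,Y)$. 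Here $\eta$ can be as small as $K^{-(d+1)}$: take $Y$ to be unit cubes centred on $K^2\mathbb{Z}^d$, and the rescaled $\mathcal{A}$ genuinely exceeds $\mathcal{A}(R,Y)$ by a positive power of $K$. So your claim that the rescaled $\mathcal{A}$ is at most $K^{-c}\mathcal{A}(R,Y)$ is false. The hypothesis $\beta\ge\alpha+m-d$ only removes the power of $K$.

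The induction closes only if the narrow decoupling step supplies a compensating factor $\eta^{1/m}$. ``Bourgain--Demeter in dimension $m-1$ at cost $K^{O(\epsilon)}$'' does not do this. Decoupling on full $K^2$-cubes discards $Y$ entirely, while on $Y\cap B$ the trivial $L^2$ bound loses $\#S(B)^{1/2}$, up to $K^{(m-2)/2}$. The missing ingredient is a weighted $L^2$ estimate at scale $K^2$ for $(O(K^{-1}),m-1)$-concentrated frequencies, namely $\|\sum_{T_1}f_{T_1}\|_{L^2(Y\cap B)}\lessapprox(\max_{T_1}|Y\cap 2T_1|/|T_1|)^{1/m}(\sum_{T_1}\|f_{T_1}\|_{L^2}^2)^{1/2}$. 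This is Theorem~\ref{thm:b}, obtained from refined decoupling. You combine it with pigeonholing the $T_1$ so that $|2T_1\cap Y|\sim\eta|T_1|$. Then $\eta^{1/m}$ exactly cancels $(\eta^{-(m+1)})^{1/(m(m+1))}$, and the induction closes through $R_1^{\epsilon}=K^{-2\epsilon}R^{\epsilon}$. Note also that rescaling turns $(R^{-1/2},m)$-concentration into $(O(1)R_1^{-1/2},m)$-concentration, so you must induct on the version with a parameter $E$ (Theorem~\ref{thm:LpWeightedDecouplingConcentratedGeneral}).

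The broad case is only heuristic as written. The exponent $\frac{1}{m(m+1)}$ is asserted rather than derived. Your slicing/Fubini route would also have to confront the fact that broadness is defined on $d$-dimensional $K^2$-cubes. An $m$-linear bound on each slice therefore controls an integral over $x'\in V^\perp$ of products, not the product of integrals you actually have.

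The paper avoids this altogether:
\begin{enumerate}
\item On broad cubes, $\|f\|_{L^2(Y\cap B)}^2\lessapprox\prod_j\|f_{\tau_j}\|_{L^2(Y\cap B)}^{2/m}$.
\item After H\"older over the $R^{1/2}$-cubes $Q$, one is left with $\prod_j\|f_j\|_{L^2(Y_1)}^{1/m}$.
\item Each factor is bounded by refined decoupling for $m$-concentrated frequencies by $(|Y_1|M_j/(|T|\#\mathbb{W}_j))^{1/(m+1)}\|f_j\|_{L^2}$.
\item $m$-linear Kakeya in $\mathbb{R}^d$ gives $\prod_j M_j/\#\mathbb{W}_j\lessapprox(\#\mathcal{Q}_1)^{-(m-1)}$.
\end{enumerate}
This yields $(|Y|\max_Q|Y\cap Q|^{m-1}R^{-m(d+1)/2})^{1/(m(m+1))}$ directly (Theorem~\ref{thm:mulc_simplified}), with no interpolation against the trivial bound. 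In that step $\gamma_{\alpha,R^{1/2}}(Y)\ge 1$ and $|Y|\le\overline{\gamma_{\beta,R}}(Y)R^{\beta}$ are used only trivially, and $\beta\ge\alpha+m-d$ plays no role in the broad case.
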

\begin{remark}
	The same estimate holds more generally for any union of unit cubes in $\R^d$. Indeed, the general case follows by partitioning	$\R^d$ into $R$-cubes and summing the local estimates in \Cref{thm:LpWeightedDecouplingConcentrated}.
\end{remark}
\begin{remark}
	The bound is monotonic in $\beta$ since 
	\[ \beta \leq \beta'  \;\; \implies \;\; \overline{\gamma_{\beta,R}}(Y) R^{\beta}  \leq \overline{\gamma_{\beta',R}}(Y) R^{\beta'}.   \]
 Accordingly, \Cref{thm:LpWeightedDecouplingConcentrated} gives the strongest bound when $\beta = \max(0,\alpha + m - d)$.
\end{remark}
\begin{remark}
	The factor $\gamma_{\alpha,R^{1/2}}(Y)$ allows the induction to close for $\beta<m$ in the proof of \Cref{thm:LpWeightedDecouplingConcentrated}. It is
	not clear, however, whether this factor is genuinely necessary.
On the other hand, when $\beta \geq m$, this factor can be  removed by taking $\alpha = d$, since $\gamma_{d,R^{1/2}}(Y) =1$ whenever $Y$ contains at least one unit cube.
\end{remark}

 \Cref{thm:LpWeightedDecouplingConcentrated} yields corresponding weighted $L^2$ estimates for the Fourier extension operator associated with the surface $S$. 
 To keep the presentation simple, we focus on the case $m=d$, where $S = \{ (\xi,|\xi|^2) \in \R^{n+1}, \;\; \xi \in B_1^n(0) \}$ is the truncated paraboloid. Let 
\[ e^{it\Delta} f(x) = (2\pi)^{-n} \int_{\R^n} e^{ix\cdot \xi} e^{i t |\xi|^2} \hat{f}(\xi) d\xi \]
denote the solution to the Schr\"odinger equation $iu_t = \Delta u$ with initial data $f$. For $f$ with Fourier transform supported in $B_1^n(0)$, we let 
\begin{equation}\label{eqn:U}
	 Uf(x,t) = \varphi(R^{-1}t) e^{it\Delta} f(x), \;\; (x,t)\in \R^{n+1},
\end{equation}
where $\hat{\varphi}$ is a smooth bump function such that $1_{[-1/2,1/2]} \leq \hat{\varphi}\leq 1_{[-1,1]}$. Note that $\widehat{Uf}$ is supported on the $R^{-1}$-neighborhood of the truncated paraboloid in $\R^{n+1}$. Moreover, $\| e^{it\Delta} f\|_{L^2(Y)} \sim \| Uf \|_{L^2(Y)}$ for $Y\subset B_R^{n+1}(0)$ and, by Plancherel's theorem, $\| Uf \|_{L^2}^2 \sim R \| f\|_{L^2}^2$. Thus, applying  \Cref{thm:LpWeightedDecouplingConcentrated} to $Uf$ in the case $m=d=n+1$ yields the following. 
\begin{theorem}\label{cor:fractalRestriction}
	Let $n\geq 1$, $0\leq \alpha \leq n+1$ and $Y\subset B^{n+1}_{R}(0)$ be a union of unit cubes. For all $f$ whose Fourier transform is supported in $B_1^n(0)$, we have
\[
		\| e^{it\Delta} f\|_{L^2(Y)}\less \Big( \gamma_{\alpha,R^{1/2}}(Y) \overline{\gamma_{\alpha,R}}(Y) R^{\alpha}  
		\sup_{B_{R^{1/2}}} |Y\cap B_{R^{1/2}}|^{n}	 \Big)^{\frac{1}{(n+1)(n+2)}} \| f \|_{L^2}. 
\]
\end{theorem}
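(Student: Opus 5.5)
The plan is to deduce \Cref{cor:fractalRestriction} directly from \Cref{thm:LpWeightedDecouplingConcentrated}, applied with $d=m=n+1$ and $\beta=\alpha$. The hypothesis $\beta\geq \alpha+m-d$ then reads $\alpha\geq\alpha$, so it holds. Since $m=d$, the $(R^{-1/2},m)$-concentrated frequency condition is vacuous: we may take $V=\R^{n+1}$. With these choices $\mathcal{A}_{n+1,n+1,\alpha,\alpha}(R,Y)$ equals
\[
\gamma_{\alpha,R^{1/2}}(Y)\,\overline{\gamma_{\alpha,R}}(Y)\,R^{\alpha}\sup_{B_{R^{1/2}}}|Y\cap B_{R^{1/2}}|^{n}\,R^{-\frac{(n+1)(n+2)}{2}},
\]
and the exponent is $\frac{1}{m(m+1)}=\frac{1}{(n+1)(n+2)}$.

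First, I would take $F=Uf$ as defined in \eqref{eqn:U}. Its space-time Fourier transform is $\hat f(\xi)\,R\hat\varphi(R(\tau-|\xi|^2))$ up to harmless constants. Because $\hat\varphi$ is supported in $[-1,1]$, this is supported in the $R^{-1}$-neighborhood of the truncated paraboloid $S$, which is strictly convex with curvature comparable to $1$. By the wave packet decomposition recalled in the introduction, $F=\sum_{T\in\W}F_T$ for some $\W\subset\T(R)$, and \Cref{thm:LpWeightedDecouplingConcentrated} applies. This gives
\[
\|Uf\|_{L^2(Y)}\less \mathcal{A}^{\frac{1}{(n+1)(n+2)}}\|Uf\|_{L^2}.
\]

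Next I would justify the two comparisons asserted in the text. For $(x,t)\in Y\subset B_R^{n+1}(0)$ we have $|t|\les R$, and $\varphi(R^{-1}t)$ is bounded below there. This is arranged by the normalization of $\varphi$: since $\hat\varphi\ge 0$ with $\int\hat\varphi\gtrsim 1$, $\varphi$ is $\gtrsim 1$ near the origin, and if needed one rescales or covers $B_R$ by boundedly many $R$-balls. Hence $\|e^{it\Delta}f\|_{L^2(Y)}\les\|Uf\|_{L^2(Y)}$. For the other side, Plancherel in $x$ for each fixed $t$ gives
\[
\|Uf\|_{L^2}^2=\int|\varphi(t/R)|^2\,dt\,\|f\|_2^2\sim R\|f\|_2^2 .
\]

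Combining these, $\|e^{it\Delta}f\|_{L^2(Y)}\less \mathcal{A}^{1/((n+1)(n+2))}R^{1/2}\|f\|_2$. The remaining step is to check that $R^{1/2}$ cancels against the power of $R$ inside $\mathcal{A}$. The factor $R^{-\frac{(n+1)(n+2)}{2}}$, raised to the power $\frac{1}{(n+1)(n+2)}$, gives exactly $R^{-1/2}$. This leaves precisely the stated bound. No genuine obstacle is expected; the only care needed is the lower bound on $\varphi$ over the relevant time range, which affects only the constants.
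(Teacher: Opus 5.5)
Your proposal is correct and follows the paper's own argument: apply \Cref{thm:LpWeightedDecouplingConcentrated} to $Uf$ with $m=d=n+1$ and $\beta=\alpha$, use $\|e^{it\Delta}f\|_{L^2(Y)}\sim\|Uf\|_{L^2(Y)}$ and $\|Uf\|_{L^2}\sim R^{1/2}\|f\|_{L^2}$, and observe that the factor $R^{-m(d+1)/2}$ in $\mathcal{A}$ contributes exactly $R^{-1/2}$. Your hedge about the lower bound for $\varphi$ is not needed: since $\hat\varphi\geq 0$ is supported in $[-1,1]$, for $|s|\leq 1$ we have $\operatorname{Re}\varphi(s)=\frac{1}{2\pi}\int\hat\varphi(\tau)\cos(s\tau)\,d\tau\geq\frac{\cos 1}{2\pi}\int\hat\varphi\gtrsim 1$, so $|\varphi(t/R)|\gtrsim 1$ on all of $B_R^{n+1}(0)$.
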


 \Cref{cor:fractalRestriction} sharpens the fractal $L^2$ restriction estimate of Du and Zhang \cite[Theorem 1.6]{DZ}; the product $\gamma_{\alpha,R^{1/2}}(Y) \overline{\gamma_{\alpha,R}}(Y)$ replaces the factor $\gamma_{\alpha,R}(Y)^2$ in that paper. The improvement is significant when one of the two factors is much smaller than $\gamma_{\alpha,R}(Y)$.  We give such an example for which \Cref{cor:fractalRestriction} is sharp. 
\begin{prop}\label{prop:sharpnessFractal}
	For each $\frac{n}{2}\leq \alpha < n+1$, there exist   $f$ with $\hat{f}$ supported in $B_1^n(0)$  and a union of unit cubes $Y\subset B^{n+1}_{R}(0)$ such that $\gamma_{\alpha,R^{1/2}}(Y) \sim 1$ and $\overline{\gamma_{\alpha,R}}(Y) = \gamma_{\alpha,R}(Y) \sim R^{\frac{n+1-\alpha}{2}}$ for which 
	\[ 	\| e^{it\Delta} f\|_{L^2(Y)} \gtrapprox \Big( \gamma_{\alpha,R^{1/2}}(Y) \overline{\gamma_{\alpha,R}}(Y) R^{\alpha}  
	\sup_{B_{R^{1/2}}} |Y\cap B_{R^{1/2}}|^{n}	 \Big)^{\frac{1}{(n+1)(n+2)}} \| f \|_{L^2}. \]
\end{prop}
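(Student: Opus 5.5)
The plan is to build the example in two layers. The first layer is a periodic arrangement of $R^{1/2}$-cubes, which fixes the density parameters. The second layer is a Talbot-type (lattice-frequency) initial datum whose solution concentrates on an $\alpha$-dimensional set placed inside each $R^{1/2}$-cube.

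\emph{The set $Y$.} Inside a fixed $R^{1/2}$-cube $Q_0$, choose a union of unit cubes $Y_0$ with $|Y_0|\sim R^{\alpha/2}$ and $|Y_0\cap B_\rho|\lesssim \rho^\alpha$ for $1\le\rho\le R^{1/2}$. Let $Y$ consist of a translate of $Y_0$ in (essentially) every $R^{1/2}$-cube of $B_R^{n+1}(0)$. The exact translates will be chosen below to match where $e^{it\Delta}f$ concentrates. Then $\gamma_{\alpha,R^{1/2}}(Y)\sim1$ and $\sup_{B_{R^{1/2}}}|Y\cap B_{R^{1/2}}|\sim R^{\alpha/2}$. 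A $\rho$-cube with $R^{1/2}\le\rho\le R$ meets about $(\rho R^{-1/2})^{n+1}$ copies, so
\[
\rho^{-\alpha}|Y\cap B_\rho|\sim \rho^{n+1-\alpha}R^{-\frac{n+1-\alpha}{2}},
\]
which is maximized at $\rho=R$. This gives $\overline{\gamma_{\alpha,R}}(Y)=\gamma_{\alpha,R}(Y)\sim R^{\frac{n+1-\alpha}{2}}$.

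Substituting these parameters, the right-hand side of \Cref{cor:fractalRestriction} becomes $R^{\frac{1+\alpha}{2(n+2)}}\|f\|_2$. So the task is to find $f$ with
\[
\|e^{it\Delta}f\|_{L^2(Y)}^2\gtrapprox R^{\frac{1+\alpha}{n+2}}\|f\|_2^2 .
\]

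\emph{The function $f$.} I would take a Du-type lattice example. Let $\hat f=\sum_{\xi\in \Lambda}\psi(R^{1/2}(\cdot-\xi))$ with $\Lambda=D^{-1}\Z^n\cap B_1$ for a free spacing parameter $D\in[1,R^{1/2}]$, possibly modulated or restricted to a thinner frequency slab. By the Talbot effect, $e^{it\Delta}f$ is nearly periodic in $x$ with period $\sim D$, and for rational times $t\in D^2\Z/q$ with small $q$ it has modulus $\sim |\Lambda|$ on unit balls. The resulting large-value set is a lattice-like family of unit cubes in space-time, with spacing $D$ in $x$ and spacing $\sim D^2$ (after the rational-time refinement) in $t$. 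Inside each $R^{1/2}$-cube I would place $Y_0$ as a subset of these large-value cubes. If $D$ is tied to $\alpha$ so that this lattice inside a $R^{1/2}$-cube is itself $\alpha$-dimensional, then $Y_0$ can be taken to be exactly that sub-lattice.

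\emph{The computation.} Next I compute both sides. On $Y$, $|e^{it\Delta}f|^2\sim|\Lambda|^2=D^{-2n}\cdot(\text{const})$, up to normalization, while $\|f\|_2^2\sim|\Lambda|R^{-n/2}$. This gives the ratio $|Y|\,|\Lambda|R^{n/2}/R$ (the extra $R$ comes from $\|Uf\|_2^2\sim R\|f\|_2^2$). I then optimize $D$, together with a possible second scale restricting the frequencies to a slab, under the constraint that the concentration set fits the prescribed $Y$. The exponent $\frac{1}{(n+2)}$ should come out of balancing these two scales. The hypothesis $\alpha\ge n/2$ should be exactly the condition that the admissible $D$ lies in $[1,R^{1/2}]$.

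\emph{Main obstacle.} I expect the hardest step to be this matching: ensuring that the Talbot concentration set restricted to a single $R^{1/2}$-cube is genuinely $\alpha$-dimensional at all scales $\le R^{1/2}$, so that $\gamma_{\alpha,R^{1/2}}\sim1$ is not violated, while still having total measure $R^{\alpha/2}$. My first attempt would be a two-parameter family: frequency spacing $D$ plus restriction of $\Lambda$ to a $k$-dimensional sub-lattice, since a lattice in $k$ spatial directions plus time has dimension that can interpolate. I would then solve for the parameters that make the final power of $R$ equal $\frac{1+\alpha}{n+2}$, losing only $R^\eps$ from the number-theoretic Gauss sum estimates.
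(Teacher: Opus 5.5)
\textbf{There is a genuine gap.} Your overall architecture is sound. You aim for a lattice of unit cubes spaced $D$ in $x$ and $D^2$ in $t$ that fills $B_R^{n+1}$, and you reduce correctly to proving $\|e^{it\Delta}f\|_{L^2(Y)}^2\gtrapprox R^{\frac{1+\alpha}{n+2}}\|f\|_2^2$. The problem is that the function you propose cannot be large on such a $Y$.

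\emph{Why the proposed $f$ fails.} With bumps $\psi(R^{1/2}(\cdot-\xi))$ at scale $R^{-1/2}$, the piece with frequency $\xi$ is a single wave packet on the tube $\{|x+2t\xi|\lesssim R^{1/2}\}$. Lattice frequencies at distance $\sim 1$ give disjoint tubes once $|t|\gg R^{1/2}$. At a point with $|t|\sim R$, only $\lesssim (DR^{-1/2})^n\le 1$ packets are present. So the Talbot interference $|e^{it\Delta}f|\sim|\Lambda|R^{-n/2}$ holds only in an $O(R^{1/2})$-neighbourhood of the origin.

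The statement, however, forces $Y$ to occupy essentially every $R^{1/2}$-cube. Indeed, $\gamma_{\alpha,R^{1/2}}(Y)\sim1$ together with $\overline{\gamma_{\alpha,R}}(Y)\sim R^{\frac{n+1-\alpha}{2}}$ requires $|Y|\sim R^{\frac{n+1+\alpha}{2}}$. A quick consistency check shows the construction is impossible: if the interference held on all of $Y$, the ratio would be $|Y|\,|\Lambda|\,R^{-n/2}\ge R^{\frac{1+\alpha}{2}}$. This exceeds the upper bound $R^{\frac{1+\alpha}{n+2}}$ of Theorem~\ref{cor:fractalRestriction}. Your normalizations $|\Lambda|^2=D^{-2n}$ and ratio $|Y||\Lambda|R^{n/2}/R$ are also incorrect.

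\emph{The missing idea.} Localize each frequency bump at scale $R^{-1}$, not $R^{-1/2}$. This is the case $d=m=n+1$, $\beta=\alpha$, $p=2$ of Proposition~\ref{prop:sharpRefinedD}. Take $\hat{\psi_j}\approx 1_{B_{cR^{-1}}(j)}$ for $j\in R^{-\kappa}\Z^n\cap B_c$, with $\kappa=\frac{n+1-\alpha}{2(n+2)}$.
\begin{itemize}
\item Each $U\psi_j$ then has modulus $\sim|O_j|$ on all of $B_R^{n+1}$, so $f$ is essentially the exponential sum $\sum_j e^{i(x\cdot j+t|j|^2)}$.
\item On $Y=B_R\cap\big((2\pi R^{\kappa}\Z^n\times 2\pi R^{2\kappa}\Z)+B_1\big)$, every phase lies in $2\pi\Z+[-0.01,0.01]$ once $c$ is small. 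This is exact constructive interference: no Gauss sums, no rational times with $q>1$, no second scale and no sub-lattice are needed.
\item Counting gives $|Y|\sim R^{n+1-(n+2)\kappa}=R^{\frac{n+1+\alpha}{2}}$.
\item The ratio is $\#J\,|O_j|\,|Y|\sim R^{n\kappa-n+\frac{n+1+\alpha}{2}}=R^{\frac{1+\alpha}{n+2}}$, as required.
\end{itemize}
The hypothesis $\alpha\ge n/2$ is equivalent to $\kappa\le 1/4$, i.e.\ the time spacing $R^{2\kappa}$ is at most $R^{1/2}$. It is not equivalent to $D\le R^{1/2}$, which is automatic. This condition is exactly what keeps the lattice $\alpha$-dimensional at scales $\rho\in[R^{\kappa},R^{2\kappa}]$, where $|Y\cap B_\rho|\lesssim(\rho R^{-\kappa})^n\le\rho^{n/2}$.

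Since your proposal leaves $D$ undetermined and never performs this matching, the central step of the construction is missing, and the specific $f$ you wrote down would not work.
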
	

We recall that the weighted $L^2$ restriction estimate in \cite{DZ} yields sharp $L^2$ bounds for the Schr\"odinger maximal function and has further applications to, e.g., the spherical average Fourier decay rates of fractal measures and the Falconer distance set problem (cf.  \cite{Carleson, falconer_1985, Mattila_Sph}). We also mention related work  \cite{Shayya2} on weighted Fourier restriction estimates for weights of low fractal dimension, as well as \cite{du2024Lpweighted}, which studies weighted $L^2\to L^p$ restriction estimates under lower-dimensional wave packet concentration properties.

Next, we present a weighted refined decoupling estimate which follows from  \Cref{thm:LpWeightedDecouplingConcentrated}. In what follows, we let $\W \subset \T(R)$ and work under the assumptions that 
\begin{itemize}
	\item $\{ Q\}$ is a collection of $R^{1/2}$-cubes in an $R$-cube $B_R$, such that each $Q$ is contained in at most $M\geq 1$ of the dilated tubes $\{ 2T\}_{T\in \W}$.\footnote{In \cite{GIOW,du2023weighted}, refined decoupling is stated with $M$ counting
		the number of tubes $T\in\W$ that intersect a given cube $Q$. For our purposes,
		we use the present formulation, which can be obtained from the proof in
		\cite{GIOW,du2023weighted}.}  Here  $2T$ denotes the concentric $R^{\eps_1}$ dilate of $T$ for  $\eps_1>0$ sufficiently small relative to  $\eps>0$.
	\item $Y\subset \cup Q$ is a union of unit cubes. 
\end{itemize}

By interpolating  \Cref{thm:LpWeightedDecouplingConcentrated} and  refined decoupling \cite{GIOW, du2023weighted} (see \Cref{thm:refined_decouplingDuGeneral} below for a statement), we obtain the following.
\begin{theorem}\label{thm:LpWeightedDecouplingConcentratedRefined}
	Let $2\leq m \leq d$. Let $f = \sum_{T \in \W} f_T$ be a sum of wave packets with $(R^{-1/2}, m)$-concentrated frequencies. Suppose  $0\leq \alpha, \beta \leq d$ satisfy $\beta \geq \alpha +m - d$. Then for $2\leq p \leq p_m:= \frac{2(m+1)}{m-1}$, 
	\begin{align*}
		\| f \|_{L^p(Y)} \less \mathcal{A}_{d,m,\alpha,\beta}(R, Y)^{\frac{1}{m}\big(\frac{1}{p}-\frac{1}{p_m}\big)} 
		M^{\frac{1}{2}-\frac{1}{p}}   \bigg( \sum_{T\in \W} \| f_T \|_{L^p}^p \bigg)^{\frac{1}{p}}.
	\end{align*}
\end{theorem}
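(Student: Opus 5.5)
The plan is to interpolate, via Hölder's inequality, between the $L^2$ bound of \Cref{thm:LpWeightedDecouplingConcentrated} and the refined decoupling estimate at the endpoint $p_m$. The endpoint is applied in its $m$-dimensional form, made available by the $(R^{-1/2},m)$-concentrated frequencies assumption.

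\textbf{Step 1: the two endpoints.} For $p=2$ the claimed estimate reads $\|f\|_{L^2(Y)} \less \mathcal{A}^{\frac{1}{m(m+1)}}\|f\|_{L^2}$. This is exactly \Cref{thm:LpWeightedDecouplingConcentrated}, since $\frac1m(\frac12-\frac1{p_m}) = \frac{1}{m(m+1)}$, and $\|f\|_{L^2}^2 \sim \sum_T \|f_T\|_{L^2}^2$ by almost orthogonality. For $p=p_m$ I would invoke refined decoupling for wave packets with concentrated frequencies (\Cref{thm:refined_decouplingDuGeneral}, from \cite{du2023weighted}), in the form
\[ \|f\|_{L^{p_m}(Y)} \less M^{\frac12-\frac1{p_m}}\Big(\sum_{T\in\W}\|f_T\|_{L^{p_m}}^{p_m}\Big)^{1/p_m}. \]
Morally, the directions lie within $R^{-1/2}$ of $V$, so on each translate of $V$ the function behaves like an $m$-dimensional extension. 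The Stein–Tomas/decoupling exponent there is $\frac{2(m+1)}{m-1}$, and refined decoupling then holds with the stated $M$-factor. The footnoted formulation, with $M$ counting the dilated tubes $2T$ containing $Q$, is taken from that theorem.

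\textbf{Step 2: pigeonholing to make the interpolation honest.} Interpolating the right-hand sides requires the $\|f_T\|_{L^p}$ to be comparable. I would first dyadically pigeonhole $\W$ so that every $f_T$ has $\|f_T\|_{L^\infty}\sim \lambda$. Wave packets with negligible amplitude, below $R^{-C}\max_T\|f_T\|_\infty$, are discarded as tails. This costs a factor $O(\log R)$, which is absorbed into $\less$.

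The key point is that the sub-collection still has $(R^{-1/2},m)$-concentrated frequencies, and each $Q$ still lies in at most $M$ of the tubes $2T$. Both hypotheses are therefore inherited by the pieces. Each wave packet has $\|f_T\|_{L^p}^p \sim \lambda^p |T|$. Writing $N=\#\W$, this gives $\|f\|_{L^2}^2 \sim \lambda^2 R^{\frac{d+1}{2}}N$ and $\sum_T\|f_T\|_{L^{p}}^{p}\sim \lambda^{p} R^{\frac{d+1}{2}}N$ for every $p$.

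\textbf{Step 3: Hölder interpolation.} For $2\le p\le p_m$, choose $\theta\in[0,1]$ with $\frac1p = \frac{1-\theta}{2}+\frac{\theta}{p_m}$. Then
\[ \|f\|_{L^p(Y)} \le \|f\|_{L^2(Y)}^{1-\theta}\|f\|_{L^{p_m}(Y)}^{\theta} \less \mathcal{A}^{\frac{1-\theta}{m(m+1)}} M^{\theta(\frac12-\frac1{p_m})}\, \lambda\, (R^{\frac{d+1}{2}}N)^{\frac{1-\theta}{2}+\frac{\theta}{p_m}}. \]
The exponents match the claim:
\[ \theta\Big(\tfrac12-\tfrac1{p_m}\Big)=\tfrac12-\tfrac1p, \qquad \frac{1-\theta}{m(m+1)} = \frac{1}{m}\Big(\frac1p-\frac1{p_m}\Big), \]
the latter because $1-\theta = (\frac1p-\frac1{p_m})/(\frac12-\frac1{p_m})$. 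The last factor equals $(\sum_T\|f_T\|_p^p)^{1/p}$ for the pigeonholed collection. Summing over the $O(\log R)$ dyadic classes with the triangle inequality completes the argument.

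\textbf{Main obstacle.} The main difficulty is the endpoint $p_m$. I need refined decoupling in the lower-dimensional form, with exponent $p_m$ rather than $\frac{2(d+1)}{d-1}$, for functions with concentrated frequencies, and with the multiplicity $M$ defined through dilated tubes. If \Cref{thm:refined_decouplingDuGeneral} is stated only as in \cite{du2023weighted}, I would rederive the $2T$-version by rerunning its proof. That proof is an induction on scales with wave packets at intermediate scales, using the $m$-dimensional decoupling of Bourgain–Demeter obtained by slicing along $V$. At each step the tube-containment count is preserved.
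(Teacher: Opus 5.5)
Your proposal is correct and follows essentially the same route as the paper. The paper dyadically pigeonholes the wave packets to a common size; your $L^\infty$ normalization is equivalent to its $\|f_T\|_{L^p}\sim\lambda$. It then applies H\"older with exponents $(m+1)(\frac1p-\frac1{p_m})$ and $(m+1)(\frac12-\frac1p)$ and bounds the two factors by \Cref{thm:LpWeightedDecouplingConcentrated} and \Cref{thm:refined_decouplingDuGeneral}, converting norms via $\|f_T\|_{L^2}\sim|T|^{\frac12-\frac1p}\|f_T\|_{L^p}$. The paper states \Cref{thm:refined_decouplingDuGeneral} directly in the $2T$-formulation, so the rederivation you flag as the main obstacle is not needed.
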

When $\beta \leq m$ and $d-\frac{m+1}{2}\leq \alpha \leq d$, \Cref{thm:LpWeightedDecouplingConcentratedRefined} yields sharp estimates for variants of the example in Proposition \ref{prop:sharpnessFractal}; see \Cref{sec:sharpness}.

We compare \Cref{thm:LpWeightedDecouplingConcentratedRefined} with the earlier result \cite[Theorem 1.2(b)]{du2023weighted},
\[ 
	\| f \|_{L^p(Y)} \less R^{-\frac{1}{2} (d-\alpha)(\frac{1}{p}-\frac{1}{p_m})} 
	M^{\frac{1}{2}-\frac{1}{p}}   \bigg( \sum_{T\in \W} \| f_T \|_{L^p}^p \bigg)^{\frac{1}{p}},
\]
which was proved under the assumption that $Y$ is $\alpha$-dimensional at the scale $R^{1/2}$:  \[ \sup_{B_\rho} \rho^{-\alpha} |Y\cap B_\rho| \les 1, \;\;  \text{for} \;\;  \rho = R^{1/2}.\] 
This estimate is sharp in general for $d-\frac{m+1}{2}\leq\alpha\leq d$, and remains sharp for $\alpha\geq m$ even under the stronger assumption that $Y$ is $\alpha$-dimensional \emph{up to scale} $R$, that is, $ \gamma_{\alpha,R}(Y) \les 1$.   Under this stronger assumption, \Cref{thm:LpWeightedDecouplingConcentratedRefined} yields 
\[ 
			\| f \|_{L^p(Y)} \less R^{-\frac{1}{2} ( d+1 -  \frac{m+1}{m} \alpha)(\frac{1}{p}-\frac{1}{p_m})} 
			M^{\frac{1}{2}-\frac{1}{p}}   \bigg( \sum_{T\in \W} \| f_T \|_{L^p}^p \bigg)^{\frac{1}{p}},
\]
which improves on  the preceding estimate when $\alpha < m$. 
	
\subsection*{Falconer distance set problem}
We discuss  implications of our weighted refined decoupling estimates for the Falconer distance set problem. Let $d\geq 2$ and $E\subset \R^d$ be a compact set. Define the distance set of $E$ by
\[\Delta(E) = \{ |x-y|: x,y\in E\}. \]
Falconer \cite{falconer_1985} proved that the distance set has positive Lebesgue measure whenever $\dim_H(E)>\frac{d+1}{2}$. It is conjectured that $|\Delta(E)|>0$ under the weaker condition $\dim_H(E)>\frac{d}{2}$. Despite numerous partial results and significant advances, this conjecture remains open; see \cite{GIOW, du2023Falconer} and references therein. Currently, the best-known thresholds are $\dim_H(E)>\frac{5}{4}$ for $d=2$ \cite{GIOW} and  
\[ \dim_H(E)>\frac{d}{2}+\frac{d}{4d+2}\] 
for $d\geq 3$ \cite{du2023Falconer}. In fact, these works prove a stronger result that there is a point $x \in E$ for which the pinned distance set 
\[ \Delta_x(E) = \{ |x-y| : y \in E \}\] has positive Lebesgue measure. 

A key ingredient in these works is a decomposition of a fractal measure into good and bad parts. In \cite{du2023weighted}, the authors introduced a variant of this decomposition that simplifies the treatment of the bad part, at the cost of making the good part more difficult to handle, compared with \cite{du2023Falconer}. To deal with the good part,  they established weighted refined decoupling estimates for functions with $(r,m)$-concentrated frequencies, recovering the results of \cite{du2023Falconer} for $d\geq 4$. On the other hand, in the case $d=3$, this method yielded a weaker threshold of $\dim_H(E)> 1.757\cdots$, compared to $\dim_H(E) > \frac{12}{7}=1.714\cdots$ obtained in \cite{du2023Falconer}.

It remains open whether the threshold 
$\dim_H(E) > \frac{12}{7}$ can be recovered  in $\R^3$ within the framework of \cite{du2023weighted} by further improving the decoupling estimates for functions with $(r,m)$-concentrated frequencies. Our weighted refined decoupling estimates provide partial progress in this direction.
\begin{theorem}\label{thm:Falconer}
	Let $E\subset \R^3$ be a compact set. Then there is a point $x\in E$ such that $|\Delta_x(E)|>0$ whenever $\dim_H(E)>
	\frac{\sqrt{241}-5}{6} = 1.754\cdots$.
\end{theorem}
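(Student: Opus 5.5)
We follow the framework of \cite{du2023weighted} (and \cite{du2023Falconer, GIOW}) for the pinned Falconer problem in $\R^3$, and change only the estimate for the good part, where we use \Cref{thm:LpWeightedDecouplingConcentratedRefined} in place of \cite[Theorem 1.2(b)]{du2023weighted}.

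Let $\mu_1,\mu_2$ be Frostman measures supported on separated subsets $E_1,E_2\subset E$ with $\mu_i(B(x,r))\les r^{\alpha}$, where $\alpha<\dim_H(E)$. Decompose $\mu_1=\mu_{1,g}+\mu_{1,b}$ using the variant of the good/bad wave-packet decomposition from \cite{du2023weighted}. At scale $R=2^j$, a wave packet is bad if it is heavily concentrated on a thin neighborhood of a lower-dimensional structure, which in $\R^3$ means an $(r,m)$-plate with $m=2$. As in \cite{du2023weighted}, the bad part is handled by a counting argument with the radial projection theorem, so that $\mu_2(\text{Bad})$ is small. This step is unchanged.

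By the standard reduction (pinned distance measure plus the Liu/Mattila identity), it then suffices to show that
\[
\|d^x_*(\mu_{1,g})\|_{L^2(d\mu_2(x))}<\infty .
\]
This in turn reduces to bounding, for each $R$,
\[
\Big\| \sum_{T} M_T \mu_{1,g}\Big\|_{L^2(d\mu_2)}^2
\]
with a gain $R^{-\delta}$. The good wave packets at scale $R$ are sorted according to the $2$-plane $V$ near which they concentrate. For each $V$, the corresponding sum has $(r,2)$-concentrated frequencies, after rescaling $r$ down to $R^{-1/2}$ by parabolic rescaling or by a further decomposition as in \cite{du2023weighted}. We localize $\mu_2$ to unit cubes at scale $R$ to obtain the set $Y$, and pigeonhole the wave packets so that they have comparable $L^p$ norms and each $R^{1/2}$-cube meets about $M$ tubes.

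We then apply \Cref{thm:LpWeightedDecouplingConcentratedRefined} with $d=3$, $m=2$, $p=p_2=6$ endpoint interpolation in mind, and $\beta=\max(0,\alpha-1)$. The key gain over \cite{du2023weighted} is that $Y$, coming from an $\alpha$-dimensional Frostman measure, satisfies $\gamma_{\alpha,R}(Y)\les 1$ at every scale. This gives
\[
\mathcal{A}\les R^{\beta}R^{\alpha/2}R^{-9/2},
\]
which is exactly the improved exponent $R^{-\frac12(4-\frac32\alpha)(\frac1p-\frac1{6})}$ recorded after \Cref{thm:LpWeightedDecouplingConcentratedRefined}, valid in the relevant range $\alpha<2$. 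We combine this with $L^2$ orthogonality, the bound on $M$ coming from the Frostman condition on $\mu_1$, and the goodness constraint, which bounds the mass per plate at a threshold $R^{\kappa}$. We then optimize over $p\in[2,6]$ and over the threshold parameter that balances the good and bad parts.

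The main obstacle is this final bookkeeping. We must track exactly how the new exponent $4-\frac32\alpha$ enters the good-part inequality and then solve the optimization. We expect it to yield a quadratic condition of the form
\[
3\alpha^2+5\alpha-18>0,
\]
whose positive root is $\frac{-5+\sqrt{25+216}}{6}=\frac{\sqrt{241}-5}{6}$; this is where the threshold comes from. The first thing we would try is to redo the computation in \cite[Section on $d=3$]{du2023weighted} line by line, replacing their decay exponent $\frac12(3-\alpha)$ by $\frac12(4-\frac32\alpha)$ and keeping all other parameters symbolic. We would then confirm that the quadratic above is the binding constraint, and that the conditions for the bad part and for the alternative $m=3$ case are not binding at that root.
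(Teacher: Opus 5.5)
Your overall plan matches the paper's. You keep the bad part and the reduction of \cite{du2023weighted} unchanged and improve only the good-part estimate, replacing the decay exponent $3-\alpha$ by $4-\frac32\alpha$, which is available because the weight is $\alpha$-dimensional at every scale rather than at one scale. However, the proposal stops where the proof begins. The inequality $3\alpha^2+5\alpha-18>0$ is read off from the stated constant, not derived. What is missing is the content of \Cref{thm:decFalconer} and Proposition~\ref{cor:decFalconer}. For $d=3$, $m=2$, combine the good-tube bound $\mu_{R^{-1}}(2T_{R^{-1}})\lessapprox R^{-\alpha/2}r^{-(\alpha-1)}$ with the new estimate at $p=4$ and at $p=6$. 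This gives
\[
\|g\|_{L^2(\mu)}\lessapprox R^{1-\frac{3}{8}\alpha}\, r^{-\frac{3}{8}(\alpha-\frac{14}{9})}\|g\|_{L^2},
\qquad
\|g\|_{L^2(\mu)}\lessapprox R^{1-\frac{\alpha}{3}}\, r^{\frac{2-\alpha}{3}}\|g\|_{L^2}.
\]
For $\alpha>\frac{14}{9}$ the first bound is effective for large $r$ and the second for small $r$. They cross at $r=R^{-\alpha/(2+\alpha)}$, which gives $\Gamma(3,\alpha)=\frac{8\alpha}{3(2+\alpha)}$ uniformly in $r\in[R^{-1/2},1]$. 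Only then does $\alpha+\Gamma(3,\alpha)>3$ become $3\alpha^2+5\alpha-18>0$. For $\alpha\le\frac{14}{9}$ one gets only $\Gamma=\frac{3\alpha}{4}$, which would require $\alpha>\frac{12}{7}>\frac{14}{9}$, so the threshold comes from the second regime. There is also no separate $m=3$ case, since $m=\lfloor d/2\rfloor+1=2$ is fixed.

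Several ingredients you list would not produce these bounds as stated.

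First, for $d=3$, $m=2$ the normalizing factor is $R^{-m(d+1)/2}=R^{-4}$, not $R^{-9/2}$. Under $\gamma_{\alpha,R}(Y)\les 1$ one only has $\overline{\gamma_{\beta,R}}(Y)R^{\beta}\les R^{\alpha}$ for every $\beta\le\alpha$, with equality when $|Y|\sim R^\alpha$. So taking $\beta=\alpha-1$ gains nothing, and $\mathcal{A}\les R^{\frac32\alpha-4}$. Your $R^{\beta}R^{\alpha/2}R^{-9/2}$ does not give the exponent $4-\frac32\alpha$ that you then quote.

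Second, a Frostman measure does not yield a set $Y$ with $\gamma_{\alpha,R}(Y)\les 1$. The rescaled support can fill $B_R$; take $\mu$ to be normalized Lebesgue measure on the unit ball. The paper instead works with the weight $H(x)=R^{-(3-\alpha)}\mu_{R^{-1}}(R^{-1}x)$, which satisfies $H\les1$ and $\gamma_{\alpha,R}(H)\les 1$. It pigeonholes a level set $Y_\lambda=\{H\sim\lambda\}$, for which only $\gamma_{\alpha}(Y_\lambda)\les\lambda^{-1}$ holds, and checks that the net power of $\lambda$, namely $\frac1m(\frac{m-1}{2}-\frac1p)$, is nonnegative (\Cref{thm:measureVer}). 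The multiplicity $M$ is absorbed through $H(Y_{\lambda,M})M\le \#\W\max_T H(2T)$. That is, it is controlled by goodness of the tubes with respect to the measure defining the $L^2$ norm, not by the Frostman condition on $\mu_1$.

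Third, $(r,2)$-concentration with $r\gg R^{-1/2}$ cannot be rescaled to $(R^{-1/2},2)$-concentration. One applies the new estimate at scale $r^{-2}$, where the two notions coincide, and then decouples from scale $r^{-2}$ to $R$ at cost $(r^2R)^{\frac{d+1}{2}(\frac1{p_d}-\frac1p)}$ (\Cref{thm:linearWeightedDecoupling}). This is why the gain enters as $r^{(4-\frac32\alpha)(\frac1p-\frac16)}$ rather than as a power of $R$. It is also why the $p=6$ bound above carries the factor $(r^2R)^{1/6}$, which makes it useful only for small $r$; the balancing in $r$ is the step that actually determines the threshold.
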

\Cref{thm:Falconer} gives a modest improvement on the threshold from \cite{du2023weighted}. However, it falls short of recovering the threshold obtained in \cite{du2023Falconer}.

We close the introduction by briefly discussing the proof of the main result,
\Cref{thm:LpWeightedDecouplingConcentrated}.
The argument is based on a weighted version of the standard broad--narrow analysis (see, e.g., \cite{BG, Guth2, DZ}), following \cite{KimWeightedDec}. In the broad case, we combine refined decoupling \cite{GIOW, du2023weighted} with the multilinear Kakeya estimates \cite{BCT} to obtain a multilinear estimate. This approach goes back to \cite{DGL, DGLZ} in the setting of multilinear refined Strichartz estimates. In the narrow case, we use a weighted $L^2$ estimate derived from refined decoupling at an intermediate scale, and then apply induction on scales, adapting ideas from \cite{DZ, du2023weighted}. 

Throughout the proof, we work primarily with $L^2$ estimates, which slightly simplifies the proof in both the broad and the narrow cases. In particular, our formulation of the multilinear weighted $L^2$ estimate, inspired by \cite{DemRefined}, allows us to avoid the technical argument in \cite{DZ} that effectively replaces a product of integrals by an integral of a product. 
 
This paper is organized as follows. In \Cref{sec:sharpness}, we discuss the sharpness of the weighted refined decoupling estimate, \Cref{thm:LpWeightedDecouplingConcentratedRefined},  and prove Proposition \ref{prop:sharpnessFractal}. In \Cref{sec:implications}, we deduce  \Cref{thm:LpWeightedDecouplingConcentratedRefined} from \Cref{thm:LpWeightedDecouplingConcentrated} and derive several weighted refined decoupling estimates. 
In \Cref{sec:Falconer}, following \cite{du2023weighted}, we apply these estimates to the Falconer distance set problem.
 In \Cref{sec:refined}, we establish a multilinear weighted $L^2$ estimate that will be used in the proof of \Cref{thm:LpWeightedDecouplingConcentrated}. 
Finally, in \Cref{sec:proofMain}, we prove \Cref{thm:LpWeightedDecouplingConcentrated}.

Throughout the paper, we denote by  $p_d = \frac{2(d+1)}{d-1}$ the critical exponent in the Bourgain-Demeter decoupling theorem \cite{BD} associated with the surface $S\subset \R^d$. We denote by $B_R$ a cube in $\R^d$ of side length $R$, and write $B_R^d(c)$ when we wish to specify the ambient dimension and the center of the cube.

\section{Sharp examples}\label{sec:sharpness}
In this section, we prove sharpness results for  \Cref{thm:LpWeightedDecouplingConcentratedRefined} and Proposition \ref{prop:sharpnessFractal}. We use an example from \cite{du2023weighted}, which is adapted from \cite{BBCRV}; see also \cite{DKWZ} for a related adaptation of Bourgain's example \cite{Bo_Sch} for the Schr\"odinger maximal function.
\begin{prop}\label{prop:sharpRefinedD}
	Let $d-\frac{m+1}{2}\leq \alpha \leq d$ and $0\leq \beta \leq m$. Let  $\{ Q\}$ be a collection of $R^{1/2}$-cubes which partition the slab $B^{d-m}_{R^{1/2}}(0) \times B^{m}_{R}(0)$. There exist a function $f=\sum_{T\in \W} f_T$ with $(R^{-1/2},m)$-concentrated frequencies and a union of unit cubes $Y \subset B^{d-m}_{R^{1/2}}(0) \times B^{m}_{R}(0)$ satisfying the following properties. 
	\begin{enumerate}[label=(\roman*)]
		\item $f(x,t) = U \psi (x,t)$ for a function $\psi \in L^2(\R^{d-1})$, where $U$ is defined in \eqref{eqn:U}.
		\item $Y$ is $\alpha$-dimensional up to scale $R^{1/2}$, i.e., $\gamma_{\alpha,R^{1/2}}(Y) \sim 1$. 
		\item $\max_{B_{R^{1/2}}} |Y\cap B_{R^{1/2}}| \sim R^{\frac{\alpha}{2}}$, and  $\overline{\gamma_{\beta, R}}(Y)  R^\beta \sim |Y| \sim R^{\frac{\alpha+m}{2}}$.
		\item Each $R^{1/2}$-cube $Q \subset B^{d-m}_{R^{1/2}}(0) \times B^{m}_{R}(0)$ is contained in at most $M$ dilated tubes $\{ 2T\}_{T\in \W}$ for $M\approx R^\frac{d-\alpha}{p_m}$.
		\item For each $2\leq p\leq p_m$, 
		\begin{align*}
			\| f\|_{L^p(Y)}  
			\gtrapprox \mathcal{A}_{d,m,\alpha,\beta}(R, Y)^{\frac{1}{m}\big(\frac{1}{p}-\frac{1}{p_m}\big)} M^{\frac{1}{2}-\frac{1}{p}} (\sum_{T\in \W} \| f_{T} \|_{L^p}^p)^{\frac{1}{p}}. 
		\end{align*}
	\end{enumerate}
\end{prop}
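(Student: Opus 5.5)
We will adapt the Barcel\'o--Bennett--Carbery--Ruiz--Vilela example as used in \cite{du2023weighted}. Write $\R^d=\R^{d-m}\times\R^m$ with coordinates $(x',x'')$, and take the $m$-dimensional subspace $V$ to be $\{0\}\times\R^m$, with the time variable lying in $V$. The initial datum $\psi$ will have Fourier support in an $R^{-1/2}$-neighborhood of an $(m-1)$-dimensional slice of $B_1^{d-1}$. In the $x'$ directions this support is an $R^{-1/2}$-ball, so every wave packet direction $v(T)$ lies within angle $R^{-1/2}$ of $V$. This gives the concentrated-frequency condition and (i).

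In the $x''$ directions (spatial dimension $m-1$) we place frequency points on a lattice $R^{-1/2}\Lambda^{-1}\Z^{m-1}$ intersected with the unit ball. Here $\Lambda$ is a parameter with $1\le\Lambda\le R^{1/2}$, to be fixed by $\alpha$. Each point is smeared to an $R^{-1/2}$-cap and multiplied by a suitable phase. The resulting solution $U\psi$ then has Talbot-type constructive interference: $|U\psi|$ is maximal, of size equal to (number of caps)$\times$(single-packet amplitude), on a periodic set of unit-scale cubes. These cubes form a sublattice of spacing $\sim \Lambda$ in $x''$, located at rational times with a spacing also tied to $\Lambda$, all inside the slab $B^{d-m}_{R^{1/2}}\times B^m_R$.

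We define $Y$ to be this set of peak unit cubes, restricted further in the $x'$ slab. The restriction is chosen so that inside each $R^{1/2}$-cube $Y$ looks like a grid of spacing $\Lambda$ in the relevant directions. This makes $Y$ $\alpha$-dimensional up to scale $R^{1/2}$, giving (ii), and gives $|Y\cap Q|\sim R^{\alpha/2}$ for every $Q$ in the slab. Summing over the $\sim R^{m/2}$ cubes $Q$ gives $|Y|\sim R^{(\alpha+m)/2}$. The hypothesis $\beta\le m$ then shows that the supremum in $\overline{\gamma_{\beta,R}}(Y)R^\beta$ is attained at $\rho=R$: for $\rho\in[R^{1/2},R]$, a $\rho$-cube meets about $(\rho/R^{1/2})^m$ cubes $Q$, so
\[ \rho^{-\beta}|Y\cap B_\rho|\sim \rho^{m-\beta}R^{(\alpha-m)/2}, \]
which increases in $\rho$. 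This gives (iii). The range $\alpha\ge d-\frac{m+1}{2}$ is exactly what allows the spacing $\Lambda$ to be realized, i.e.\ $1\le\Lambda\le R^{1/2}$.

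For (iv), each $Q$ meets $2T$ only for tubes through it. By the lattice structure, this number is the number of caps divided by the $x'$-dilution factor. We choose parameters so that $M\approx R^{(d-\alpha)/p_m}$. For (v), all $f_T$ have comparable $L^p$ norms $\sim |T|^{1/p}$ times the amplitude, so the right-hand side is computable. On $Y$ we have $|f|\sim \#\{\text{caps}\}$ times the amplitude, so the left-hand side is $|Y|^{1/p}$ times this quantity. With the values of $M$ and $\mathcal{A}$ from (ii)--(iv), comparing exponents of $R$ reduces (v) to a linear identity in $1/p$. It suffices to verify this at the endpoints $p=2$ and $p=p_m$, since both sides are log-linear in $1/p$. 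At $p=p_m$ it is sharpness of refined decoupling, and at $p=2$ it is sharpness of \Cref{thm:LpWeightedDecouplingConcentrated}.

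The main obstacle is the bookkeeping in choosing the lattice spacing, the number of caps and the slab restriction simultaneously. The choice must make (ii)--(iv) hold and make both endpoint computations balance. I would first do the pure case $m=d$ (which also gives Proposition~\ref{prop:sharpnessFractal}), then add the $x'$ slab trivially.
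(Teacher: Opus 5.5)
There is a genuine gap. Your outline points the right way: a Talbot-type example with $V=\{0\}\times\R^m$, and the $\rho\in[R^{1/2},R]$ computation of $\overline{\gamma_{\beta,R}}(Y)R^\beta$. But the example you actually specify does not work, and the bookkeeping you postpone is the whole proof.

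\textbf{The frequency localization is wrong.} By (iii), $|Y|\sim R^{\frac{\alpha+m}{2}}$ and $|Y\cap Q|\lesssim R^{\alpha/2}$, so $Y$ must meet $\gtrsim R^{m/2}$ of the cubes $Q$. For $|f|\sim\#J$ times the single-piece amplitude on such a $Y$, each $U\psi_j$ must have essentially constant modulus on the whole slab $B^{d-m}_{R^{1/2}}(0)\times B^m_R(0)$. Moreover, the phase $(x,t)\cdot(\xi,|\xi|^2)$ must be constant modulo $2\pi$, up to $O(1)$, over $\operatorname{supp}\hat\psi_j$ for all such $(x,t)$. Since $|x''|$ ranges up to $R$, each piece needs $\xi''$-width $\lesssim R^{-1}$. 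With an $R^{-1/2}$-cap, $U\psi_j$ is a single wave packet on one tube of cross-section $R^{1/2}$, and the interference across the slab is lost. The correct pieces are adapted to $O_j=B^{d-m}_{cR^{-1/2}}(0)\times B^{m-1}_{cR^{-1}}(j)$.

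\textbf{The lattice is also inconsistent.} The lattice $R^{-1/2}\Lambda^{-1}\Z^{m-1}$ has spacing at most $R^{-1/2}$, so the caps overlap. Its spatial period is $2\pi R^{1/2}\Lambda\ge 2\pi R^{1/2}$, not $\Lambda$.

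\textbf{With the correct pieces, the parameters are forced.} All packets of $U\psi_j$ lie in one cap and tile the slab. So each $Q$ lies in $\lessapprox 1$ dilated tube per $j$, and $M\approx\#J$. Here $M$ depends only on the tubes and the $Q$'s, not on $Y$; there is no ``$x'$-dilution factor''. With lattice spacing $R^{-\kappa}$ one has $\#J\sim R^{\kappa(m-1)}$, so (iv) forces $\kappa=\frac{d-\alpha}{2(m+1)}$.

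The peaks then sit at $x''\in 2\pi R^{\kappa}\Z^{m-1}$ and, since $|j|^2\in R^{-2\kappa}\Z$, at $t\in 2\pi R^{2\kappa}\Z$. So the time spacing is the \emph{square} of the $x''$ spacing. Taking $Y$ to be the full $x'$-slab times these unit cubes already gives
$|Y\cap Q|\sim R^{\frac{d-m}{2}}R^{(\frac12-\kappa)(m-1)}R^{\frac12-2\kappa}=R^{\alpha/2}$.
Any genuine restriction in $x'$ would therefore break (iii).

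\textbf{The role of the hypothesis.} The condition $\alpha\ge d-\frac{m+1}{2}$ is exactly $\kappa\le\frac14$, i.e.\ $R^{2\kappa}\le R^{1/2}$, not $1\le\Lambda\le R^{1/2}$. It is what makes $\rho^{d-m}(\rho R^{-\kappa})^{m-1}\le\rho^{d-\frac{m+1}{2}}\le\rho^{\alpha}$ on $[R^{\kappa},R^{2\kappa}]$, so the three-regime density check for (ii) goes through.

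\textbf{The endpoint argument for (v) is circular.} Reducing (v) to $p=2$ and $p=p_m$ by log-linearity is fine. But justifying the endpoints by ``sharpness of refined decoupling'' and ``sharpness of \Cref{thm:LpWeightedDecouplingConcentrated}'' assumes what is being proved. With the correct example the computation is direct for every $p$:
\begin{itemize}
\item $\|f\|_{L^p(Y)}\gtrsim \#J\,|O_j|\,|Y|^{1/p}$;
\item $M^{\frac12-\frac1p}(\sum_T\|f_T\|_p^p)^{1/p}\lessapprox(\#J)^{1/2}|O_j|R^{\frac{d+m}{2p}}$;
\item so the ratio is $R^{-\frac{d-\alpha}{2}(\frac1p-\frac1{p_m})}$.
\end{itemize}
This matches $\mathcal{A}_{d,m,\alpha,\beta}(R,Y)^{\frac1m(\frac1p-\frac1{p_m})}$, because $\mathcal{A}_{d,m,\alpha,\beta}(R,Y)\sim R^{-\frac{m(d-\alpha)}{2}}$.
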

\begin{proof}
We recall the example from \cite{du2023weighted}. Let 
\[ \kappa = \frac{d-\alpha}{2(m+1)}.\] 
We note that $0\leq \kappa  \leq \frac{1}{4}$ for the range of $\alpha$ under consideration.
For a small $0<c<1$, let $J = (R^{-\kappa}\Z^{m-1}) \cap B^{m-1}_{c}(0)$. For each $j\in J$, let $\hat{\psi_{j}}$ be a smooth bump function which approximates the indicator function of the rectangle
\[ O_j =B^{d-m}_{cR^{-1/2}}(0) \times B^{m-1}_{cR^{-1}}(j), \]
and take $\psi = \sum_{j\in J} \psi_j$. 
We write $x=(x',x'') \in \R^{d-m} \times \R^{m-1}$ and $\xi=(\xi',\xi'') \in \R^{d-m} \times \R^{m-1}$. 
Define $f=U\psi$, so that $f  = \sum_{j\in J} f_j$, where 
\[ f_j(x,t) = U \psi_{j}(x,t) =  \varphi(R^{-1} t) c_d \int e^{i(x',x'',t) \cdot (\xi',\xi'', |\xi'|^2+|\xi''|^2)} \hat{\psi_{j}}(\xi) d\xi.\]
Then $|f_{j}| \sim |O_j|$ on the slab $B^{d-m}_{R^{1/2}}(0) \times B^{m}_{R}(0)$ and $|f_j|$ decays rapidly away from this slab. Thus, we have
\[ \| f_j \|_{L^p} \sim | O_j| R^{\frac{d+m}{2p}}. \]

Let $f_j = \sum_{T\in \T_j} f_T$ denote a wave packet decomposition of $f_j$. Let $\W = \cup_{j\in J} \T_j$. Then each $T \in \W$ is parallel to the $m$-dimensional subspace $V=\{0\}^{d-m} \times \R^m$. Therefore, 
\[ f= \sum_{j \in J} f_{j} =\sum_{T\in \W} f_T\]
has  $(R^{-1/2},m)$-concentrated frequencies. Moreover, every $Q$ is contained in $\leq M$ of the dilated tubes $\{ 2T\}_{T\in \W}$, where 
\[ M \approx \# J \sim R^{\kappa(m-1)} = R^{ \frac{d-\alpha}{2(m+1)}(m-1)} = R^{\frac{d-\alpha}{p_m}}.  \] 

Let 
\[ Y = B^{d-m}_{R^{1/2}}(0) \times \Big(   B^m_R(0)\cap \big( (2\pi R^{\kappa}\Z^{m-1} \times 2\pi R^{2\kappa} \Z)   + B^m_1(0) \big) \Big).  \]
Choosing $c>0$ sufficiently small, we have, for $(x,t)\in Y$ and
$\xi\in\bigcup_j O_j$,
\[ (x,t)\cdot(\xi, |\xi|^2) =  (x',x'',t) \cdot (\xi',\xi'', |\xi'|^2+|\xi''|^2) \in 2\pi\Z + [-0.01,0.01]. \]
Thus, \[ |f(x,t)| \sim \# J | f_j (x,t)| \sim \# J | O_j|, \;\; (x,t) \in Y.\]  Note that 
\[ |Y| \sim R^{\frac{d-m}{2}} R^{(1-\kappa)(m-1) + 1-2\kappa} = R^{\frac{d+m}{2} - \kappa(m+1)} = R^\frac{m+\alpha}{2}.\]

We have 
\begin{align*}
	\| f\|_{L^p(Y)} &\gtrsim \# J | O_j| |Y|^{\frac{1}{p}} \sim  \# J | O_j| R^{\frac{m+\alpha}{2p}} \\
M^{\frac{1}{2}-\frac{1}{p}} (\sum_{T\in \W} \| f_{T} \|_{L^p}^p)^{\frac{1}{p}} &\les 	M^{\frac{1}{2}-\frac{1}{p}} (\sum_{j\in J} \| f_j \|_{L^p}^p)^{\frac{1}{p}}  \approx (\# J)^{\frac{1}{2}} |O_j| R^{\frac{d+m}{2p}}.
\end{align*}
Hence, we have the lower bound
\begin{equation}\label{eqn:lower}
	\frac{	\| f\|_{L^p(Y)} }{M^{\frac{1}{2}-\frac{1}{p}} (\sum_{T\in \W} \| f_{T} \|_{L^p}^p)^{\frac{1}{p}}  } \gtrapprox (\# J)^{\frac{1}{2}} R^{-\frac{d-\alpha}{2p}}  \sim  R^{-\frac{d-\alpha}{2} ( \frac{1}{p}-\frac{1}{p_m})}. 
\end{equation}

On the other hand, we have
	\[	|Y\cap B_\rho| \les \begin{cases}
		\rho^{d-m}  &  \rho \in [1,R^{\kappa}], \\
		\rho^{d-m} (\rho R^{-\kappa})^{m-1} \leq  \rho^{d-\frac{m+1}{2}} & \rho \in [R^{\kappa},R^{2\kappa}], \\
		\rho^{d-m} (\rho R^{-\kappa})^{m-1} (\rho R^{-2\kappa}) = R^{-\frac{d-\alpha}{2}} \rho^{d} & \rho \in [R^{2\kappa}, R^{1/2}].
	\end{cases}\]
	It follows from these bounds that $Y$ is $\alpha$-dimensional up to scale $R^{1/2}$. Moreover, 
\[ \gamma_{\alpha,R^{1/2}}(Y) \sim 1 \; \text{ and } \; \max_{B_{R^{1/2}}} | Y \cap B_{R^{1/2}} | \sim R^{\alpha/2}. \]

For $R^{1/2}\leq \rho \leq R$,
\begin{align*}
	|Y\cap B_\rho| \les R^{\frac{d-m}{2}} (\rho R^{-\kappa})^{m-1} (\rho R^{-2\kappa}) =R^{\frac{\alpha-m}{2}} \rho^m
\end{align*}
and there exists $B_\rho$ where the above inequality is tight.  Thus, when $\beta \leq m$, 
\[  \overline{\gamma_{\beta,R}} (Y) R^\beta \sim R^{\frac{m+\alpha}{2}} \sim |Y|. \]
Combining these estimates, we get
\[ \Big( \gamma_{\alpha,R^{1/2}}(Y) \overline{\gamma_{\beta,R}}(Y) R^{\beta}
\sup_{B_{R^{1/2}}} |Y\cap B_{R^{1/2}}|^{m-1}	R^{-\frac{m(d+1)}{2}} \Big)^{\frac{1}{m}\big(\frac{1}{p}-\frac{1}{p_m}\big)} \sim R^{-\frac{d-\alpha}{2} ( \frac{1}{p}-\frac{1}{p_m})},  \]
which, combined with \eqref{eqn:lower}, verifies the claim. 	
\end{proof}

A special case of Proposition \ref{prop:sharpRefinedD} yields Proposition \ref{prop:sharpnessFractal}.
\begin{proof}[Proof of Proposition \ref{prop:sharpnessFractal}]
	  Let $d=m=n+1$, $\alpha=\beta$ and $p=2$. Consider the example from Proposition \ref{prop:sharpRefinedD}. We have 
	  \[ \| U\psi \|_{L^2(Y)} \gtrapprox  \Big( \gamma_{\alpha,R^{1/2}}(Y) \overline{\gamma_{\alpha,R}}(Y) R^{\alpha}
	  \sup_{B_{R^{1/2}}} |Y\cap B_{R^{1/2}}|^{n}  \Big)^{\frac{1}{(n+1)(n+2)}} R^{-\frac{1}{2}}  \| U\psi \|_{L^2}.   \]
	  Since $R^{-\frac{1}{2}}  \| U\psi \|_{L^2} \sim \| \psi \|_{L^2}$, this verifies the claim.
\end{proof}

\section{Weighted refined decoupling estimates}\label{sec:implications}
In this section, we verify the weighted refined decoupling estimate,  \Cref{thm:LpWeightedDecouplingConcentratedRefined}, and explore its implications. We first recall the refined decoupling estimate.
\begin{theorem}[Refined decoupling] \label{thm:refined_decouplingDuGeneral}
	Let $2\leq m \leq d$. Let $f = \sum_{T \in \W} f_T$ be a sum of wave packets with $(ER^{-1/2}, m)$-concentrated frequencies for some $E\geq 1$. Suppose that $\{ Q\}$ is a collection of $R^{1/2}$-cubes in $B_R$, such that each $Q$ is contained in at most $M\geq 1$ of the dilated tubes $\{ 2T\}_{T\in \W}$. Then
	\[ \| f \|_{L^{p_m}(\cup Q)} \less E^{\frac{d-m}{m+1}} M^{\frac{1}{m+1}} \Big( \sum_{T\in \W} \| f_T \|_{L^{p_m}}^{p_m} \Big)^{1/p_m}. \]
\end{theorem}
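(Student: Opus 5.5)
The plan is to follow the induction-on-scales scheme of \cite{GIOW}, in the form adapted to concentrated frequencies in \cite{du2023weighted}. It has two inputs. The first is the Bourgain--Demeter $\ell^2$ decoupling theorem \cite{BD} for the $(m-1)$-dimensional curved slices of $S$. The second is an elementary Hölder step that converts $\ell^2$ sums into $\ell^{p_m}$ sums, at a cost of the number of summands raised to the power $\frac12-\frac1{p_m}=\frac1{m+1}$. That exponent accounts for both factors in the statement: $M^{\frac1{m+1}}$ and $(E^{d-m})^{\frac1{m+1}}=E^{\frac{d-m}{m+1}}$.

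\textbf{Step 1: reduction to $m$-dimensional decoupling.} Let $V$ be the $m$-dimensional subspace in the definition of $(ER^{-1/2},m)$-concentration. The condition $\angle(v(T),V)\le ER^{-1/2}$ says that the normals of all relevant caps $\theta$ lie in an $ER^{-1/2}$-neighbourhood of a codimension-$(d-m)$ great subsphere. Equivalently, the caps $\theta$ lie in the $ER^{-1/2}$-neighbourhood of an $(m-1)$-dimensional curved slice $\Gamma$ of $S$; by curvature comparable to $1$, $\Gamma$ has nonvanishing curvature inside an $m$-plane parallel to $V$.
\begin{itemize}
\item Group the caps into blocks $\tau$: each block consists of the caps lying over a single $R^{-1/2}$-cap of $\Gamma$. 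Each block contains $\lesssim E^{d-m}$ caps $\theta$.
\item On any affine $m$-plane parallel to $V$, the functions $f_\tau$ behave like functions Fourier supported on $R^{-1}$-caps of a curved hypersurface in $\R^m$.
\item Fubini over the parallel planes therefore gives $\ell^2$ decoupling into the blocks $\tau$ at exponent $p_m$ with loss $R^{\eps}$.
\item Within a single block, Hölder in $\ell^2\to\ell^{p_m}$ over its $\le E^{d-m}$ caps produces the factor $E^{\frac{d-m}{m+1}}$.
\end{itemize}
So it suffices to prove the refined estimate for the blocks, with $M$ counting blocks through each $Q$; this count is no larger than the original $M$ up to harmless factors.

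\textbf{Step 2: refined decoupling via multiscale induction.} For each $R^{1/2}$-cube $Q$, only the wave packets with $Q\subset 2T$ contribute non-negligibly on $Q$. This uses rapid decay of $f_T$ outside $T$ together with the $R^{\eps_1}$-dilation, which is why the hypothesis is stated for $2T$. If we could decouple directly into $R^{-1/2}$-caps on $Q$, Hölder over the $\le M$ surviving packets would give the claim at once. However, an $R^{1/2}$-cube only permits decoupling down to caps of size $R^{-1/4}$. Hence we run the argument of \cite{GIOW}:
\begin{itemize}
\item choose scales $R^{1/2}=r_0<r_1<\dots<r_N=R$ with $r_{k+1}=r_k^{1+\delta}$;
\item at each scale, perform wave packet decompositions at scale $r_k$ and apply local decoupling on $r_k$-balls;
\item dyadically pigeonhole the wave packets by amplitude, and the cubes $Q$ by the number of tubes through them, so that all counts are essentially constant;
\item use parabolic rescaling to pass from one scale to the next.
\end{itemize}
The refined counting is propagated across scales as in \cite{GIOW}. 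Each intermediate tube at scale $r_k$ containing a given $Q$ contains only those fine wave packets that pass through $Q$. The $\ell^2$ sums can therefore be bounded by the local count $M$ at the final step, and the $(\#\text{scales})$-fold losses are all $R^{O(\delta)}$, which is $\lessapprox 1$.

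\textbf{Main obstacle.} I expect the main difficulty to be keeping the concentrated-frequency structure compatible with parabolic rescaling at each intermediate scale. After rescaling a block $\tau$, the normals must still lie within angle $E\,r^{-1/2}$ of a rescaled $m$-plane, and the block count per slice cap must remain $\lesssim E^{d-m}$ rather than degrading. My first attempt is to perform Step 1 only once, at the outset: foliate by $m$-planes parallel to $V$, where everything is a genuinely $m$-dimensional problem with $R^{-1}$-thin caps of a curved surface. I would then run the $m$-dimensional refined decoupling of \cite{GIOW} verbatim on each plane, recombine by Fubini, and only at the end apply the Hölder step over the $E^{d-m}$ caps per block.
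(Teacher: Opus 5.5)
\textbf{There is a genuine gap in Step~1.} It sits exactly where you extract the factor $E^{\frac{d-m}{m+1}}$. You claim that, on affine $m$-planes parallel to $V$, the blocks $f_\tau$ behave like functions Fourier supported on $R^{-1}$-caps of a curved hypersurface in $V$. This is false once $E\gg 1$.

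Along $\Gamma$ the normal to $S$ lies in $V$, so $V^\perp$ is \emph{tangent} to $S$ there. Moving a distance $ER^{-1/2}$ off $\Gamma$ in the $V^\perp$ directions therefore moves the projection onto $V$ by $\sim E^2R^{-1}$ normal to $\pi_V(\Gamma)$. So the slices are Fourier supported in the $E^2R^{-1}$-neighbourhood of $\pi_V(\Gamma)$. Bourgain--Demeter in $V\cong\R^m$ then decouples only into pieces of size $ER^{-1/2}$; below that size the slice is flat, and decoupling into your $R^{-1/2}$-blocks loses a power of $E$.

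A concrete check:
\begin{itemize}
\item Take $d=3$, $m=2$, $S$ the paraboloid, $V=\mathrm{span}(e_2,e_3)$.
\item Let $f$ consist of one wave packet (with $|f_T|\sim 1$ on $T$) for each of the $\sim E^2$ caps $\theta$ in $\sigma=\{|\xi_1|,|\xi_2|\le ER^{-1/2}\}$, all interfering constructively at the origin.
\item Then $|f|\sim E^2$ on the dual box of $\sigma$, which has volume $\sim E^{-4}R^2$, so $\|f\|_{L^6(\cup Q)}\gtrsim E^{4/3}R^{1/3}$.
\item Stationary phase in $\xi_1$ gives $\|f_\tau\|_{L^6}^6\lesssim E^3R^2$ for each of the $\sim E$ blocks.
\item Hence both $(\sum_\tau\|f_\tau\|_{L^6}^2)^{1/2}$ and your reduced estimate $M_{\mathrm{blocks}}^{1/3}(\sum_\tau\|f_\tau\|_{L^6}^6)^{1/6}$ are $\lesssim ER^{1/3}$. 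This is off by $E^{1/3}$, which can be a power of $R$.
\end{itemize}

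Your fallback in the last paragraph fails for the same reason. The slices are $E^2R^{-1}$-thick, and tubes tilted by up to $ER^{-1/2}$ from $V$ leave an $R^{1/2}$-slab around a plane after length $\sim R/E$. So no plane carries an $m$-dimensional problem at scale $R$. Separately, the ``H\"older within a block'' step presupposes an $\ell^2$ decoupling of $f_\tau$ into its caps, which H\"older alone does not supply.

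\textbf{The missing idea.} The factor $E^{\frac{d-m}{m+1}}$ is not a counting factor but a full $d$-dimensional decoupling constant. Indeed $(E^2)^{\frac{d+1}{2}(\frac{1}{p_d}-\frac{1}{p_m})}=E^{\frac{d-m}{m+1}}$ is the Bourgain--Demeter $\ell^2$ constant at relative scale $E^2$ and exponent $p_m\ge p_d$. Its agreement with $(E^{d-m})^{\frac12-\frac1{p_m}}$ is a numerical coincidence.

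The paper does not reprove the statement; it cites existing results:
\begin{itemize}
\item \cite{GIOW} for $m=d$;
\item \cite[Theorem 1.2(a)]{du2023weighted} for $E=1$, where your slicing-plus-GIOW idea is sound because the thicknesses match;
\item \cite[Theorem 1.2(c)]{du2023weighted} for general $E$, specialized to $r=ER^{-1/2}$, $\alpha=d$, $p=p_m$, so that the factor $(r^2R)^{\frac{d+1}{2}(\frac{1}{p_d}-\frac{1}{p})}$ becomes exactly $E^{\frac{d-m}{m+1}}$.
\end{itemize}

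That last result is proved by a two-step decoupling. The caps $\sigma$ of size $ER^{-1/2}=(R/E^2)^{-1/2}$ form a $((R/E^2)^{-1/2},m)$-concentrated family at scale $R/E^2$, where the $E=1$ estimate (and your slicing) is legitimate. Each $f_\sigma$ is then decoupled into its $\sim E^{d-1}$ caps $\theta$ by parabolic rescaling and Bourgain--Demeter in $\R^d$, with the refined count $M$ propagated through both steps. To repair your proposal, replace Step~1 by this two-scale scheme.
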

For the case $m=d$, \Cref{thm:refined_decouplingDuGeneral} was obtained in \cite{GIOW}. When $E=1$, the result coincides with \cite[Theorem 1.2(a)]{du2023weighted}. For general $E\geq 1$, it follows from \cite[Theorem 1.2(c)]{du2023weighted} by taking $r=ER^{-1/2}$, $\alpha = d$ and $p=p_m$. 

\begin{proof}[Proof of  \Cref{thm:LpWeightedDecouplingConcentratedRefined}]
	Let $2\leq p \leq p_m$. 
	We may assume that $\Big( \sum_{T\in \W} \| f_T \|_{L^p}^p \Big)^{\frac{1}{p}} = 1$. For dyadic $\lambda \in [R^{-100d} , 1]$, let $\W_\lambda := \{ T\in \W: \| f_T \|_p \sim \lambda \}$ and $f_\lambda = \sum_{T\in \W_\lambda} f_T$. Then up to a negligible term, there exists a dyadic $\lambda$ such that 
	\[ 
	\| f\|_{L^p(Y)} \les \log R   \| f_\lambda \|_{L^p(Y)}.
	\]
	
	Since $f_T$ is a wave packet, we have \[ \| f_T \|_{L^2} \sim |T|^{\frac{1}{2}-\frac{1}{p}} \| f_T\|_{L^p}\] 
	and therefore,
	\begin{equation}\label{eqn:wavepacketNorm}
		\bigg( \sum_{T\in \W_\lambda} \| f_T\|_{L^2}^2  \bigg)^\frac{1}{2} \sim  (|\W_\lambda| |T|)^{\frac{1}{2}-\frac{1}{p}} \bigg( \sum_{T\in \W_\lambda}  \| f_T\|_{L^p}^p \bigg)^{\frac{1}{p}} 
	\end{equation}
	for all $2\leq p\leq \infty$. 
	By H\"{o}lder, 
	\[ \| f_\lambda \|_{L^p(Y)} \leq \| f_\lambda \|_{L^2(Y)}^{(m+1)(\frac{1}{p}-\frac{1}{p_m})} \| f_\lambda\|_{L^{p_m}(\cup Q)}^{(m+1)(\frac{1}{2}-\frac{1}{p})}. \]
	Applying \Cref{thm:LpWeightedDecouplingConcentrated} to $\| f_\lambda \|_{L^2(Y)}$ and \Cref{thm:refined_decouplingDuGeneral} to $\| f_\lambda \|_{L^{p_m}(\cup Q)}$, and subsequently invoking \eqref{eqn:wavepacketNorm}, we arrive at the claimed estimate.
\end{proof}

Following  \cite{GIOW, du2023weighted,  KimWeightedDec}, we derive a weighted estimate from \Cref{thm:refined_decouplingDuGeneral}.
\begin{theorem}\label{thm:b}
Let	$1\leq m \leq d$. 	Let $\W \subset \T(R)$ and $f= \sum_{T\in \W} f_T$ be a sum of wave packets with $(ER^{-1/2},m)$-concentrated frequencies for some $E\geq 1$. Let $Y\subset B_R$ be a union of unit cubes in $\R^d$. Then
	\[  \| f\|_{L^2(Y)}  \less E^{\frac{d-m}{m+1}} \Big( \max_{T\in \W} \frac{|Y\cap 2T|}{|T|} \Big)^{1/(m+1)} \Big( \sum_{T\in \W} \| f_T \|_{L^{2}}^{2} \Big)^{\frac{1}{2}}.\]
\end{theorem}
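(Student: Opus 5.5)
The plan is the standard argument of \cite{GIOW}: pigeonhole the wave packets and the $R^{1/2}$-cubes, apply H\"older to pass from $L^2(Y)$ to $L^{p_m}$, use \Cref{thm:refined_decouplingDuGeneral}, and finish with a double counting between cubes and tubes. Throughout, the rapidly decaying tails of the wave packets outside $2T$ are negligible. This is because a cube $Q$ of side $R^{1/2}$ that meets $T$ is contained in the $R^{\eps_1}$-dilate $2T$.

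\emph{Pigeonholing.} Normalize and decompose $\W$ dyadically by the size of $\|f_T\|_{L^2}$. Dyadic values below $R^{-100d}\max_T\|f_T\|_2$ are discarded as negligible. After a $\log R$ loss we may assume that $\|f_T\|_{L^2}\sim\lambda$ for all $T\in\W_\lambda$, and we replace $f$ by $f_\lambda$. Next, cover $Y$ by the $R^{1/2}$-cubes $Q\subset B_R$ meeting it. For dyadic $M\in[1,R^{O(1)}]$, let $\mathcal{Q}_M$ be the cubes contained in $\sim M$ of the tubes $\{2T\}_{T\in\W_\lambda}$, and set $Y_M=Y\cap\bigcup_{Q\in\mathcal{Q}_M}Q$. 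Another $\log R$ loss lets us fix one $M$ with $\|f_\lambda\|_{L^2(Y)}\lessapprox\|f_\lambda\|_{L^2(Y_M)}$.

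\emph{H\"older and refined decoupling (case $m\ge2$).} Since $\frac12-\frac1{p_m}=\frac1{m+1}$, H\"older gives
\[\|f_\lambda\|_{L^2(Y_M)}\le |Y_M|^{\frac1{m+1}}\|f_\lambda\|_{L^{p_m}(\cup_{\mathcal Q_M}Q)}.\]
We then apply \Cref{thm:refined_decouplingDuGeneral} with the collection $\mathcal{Q}_M$. For a single wave packet, $\|f_T\|_{p_m}\sim|T|^{-\frac1{m+1}}\|f_T\|_2$, so
\[\|f_\lambda\|_{L^2(Y)}\lessapprox E^{\frac{d-m}{m+1}}\,(M|Y_M|)^{\frac1{m+1}}|T|^{-\frac1{m+1}}\,(\#\W_\lambda)^{\frac1{p_m}}\lambda.\]
Comparing with the target $E^{\frac{d-m}{m+1}}\big(\max_T|Y\cap 2T|/|T|\big)^{\frac1{m+1}}(\#\W_\lambda)^{\frac12}\lambda$, and using $\frac12-\frac1{p_m}=\frac1{m+1}$ once more, it suffices to show
\[M|Y_M|\lesssim \#\W_\lambda\cdot\max_{T}|Y\cap 2T|.\]

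\emph{Double counting.} Every point of $Y_M$ lies in a cube $Q\in\mathcal Q_M$, and such a cube is contained in $\gtrsim M$ of the tubes $2T$. Hence
\[M|Y_M|\lesssim\sum_{T\in\W_\lambda}|Y_M\cap 2T|\le \#\W_\lambda\max_T|Y\cap2T|,\]
which is exactly what is needed.

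\emph{The case $m=1$ (expected main obstacle).} The statement allows $m=1$, where $p_1=\infty$ and \Cref{thm:refined_decouplingDuGeneral} is not available as stated. Here I would replace refined decoupling by a direct $L^\infty$ bound on each $Q\in\mathcal Q_M$. By $(ER^{-1/2},1)$-concentration, the relevant caps $\theta$ lie in an $ER^{-1/2}$-neighborhood of a set of normals within $ER^{-1/2}$ of a fixed line. The tubes through $Q$ therefore come from $\lesssim E^{d-1}$ distinct caps, and each cap contributes $O(1)$ tubes through $Q$. Cauchy--Schwarz over these $\lesssim\min(M,E^{d-1})$ packets gives
\[\|f\|_{L^\infty(Q)}\lesssim E^{\frac{d-1}2}\Big(\sum_{2T\supset Q}\|f_T\|_\infty^2\Big)^{1/2}.\]
Inserting this in place of refined decoupling and repeating the counting above should yield the factor $E^{\frac{d-1}{2}}$. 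I would check carefully that the powers of $M$ and $|T|$ balance in this case as they do for $m\ge2$; this bookkeeping is the step I am least sure of.
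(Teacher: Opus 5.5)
Your argument is correct. For $m\ge 2$ it is the paper's proof: pigeonhole $\|f_T\|_{L^2}$ and $M$, apply H\"older using $\frac12-\frac1{p_m}=\frac1{m+1}$, apply \Cref{thm:refined_decouplingDuGeneral}, convert wave-packet $L^{p_m}$ norms to $L^2$ norms, and close with the double counting $M|Y_M|\lesssim \#\W\max_T|Y\cap 2T|$.

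The only divergence is the case $m=1$, where the paper does not pigeonhole at all. It applies Cauchy--Schwarz globally over the $O(E^{d-1})$ caps $\theta$. It then uses that the parallel dilated tubes $\{2T\}_{T\in\W_\theta}$ have $\lessapprox 1$ overlap, which reduces matters to $\sum_T\|f_T\|_{L^2(Y\cap 2T)}^2$. It finishes with the trivial bound $\|f_T\|_{L^2(Y\cap 2T)}^2\le |Y\cap 2T|\,\|f_T\|_\infty^2\sim \frac{|Y\cap 2T|}{|T|}\|f_T\|_2^2$.

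Your local $L^\infty$ bound works as the $p_1=\infty$ analogue of refined decoupling. The bookkeeping you were unsure about does balance:
\begin{itemize}
\item On each $Q\in\mathcal Q_M$ you get $|f_\lambda|\lessapprox E^{\frac{d-1}{2}}M^{\frac12}|T|^{-\frac12}\lambda$.
\item Hence $\|f_\lambda\|_{L^2(Y_M)}\lessapprox E^{\frac{d-1}{2}}(M|Y_M|)^{\frac12}|T|^{-\frac12}\lambda$. This is exactly your $m\ge 2$ formula evaluated at $m=1$, since the factor $(\#\W_\lambda)^{1/p_m}$ becomes $1$ and $\frac1{m+1}=\frac12$.
\item The same double counting, together with $\#\W_\lambda\lambda^2\sim\sum_T\|f_T\|_2^2$, finishes the proof.
\end{itemize}
One minor correction: each cap contributes $\lessapprox 1$ tubes with $2T\supset Q$, not $O(1)$, because $2T$ is an $R^{\eps_1}$-dilate. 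This is harmless.

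Your route buys uniformity, since it treats $m=1$ as the endpoint of the same scheme. The paper's route buys brevity. If you integrate your pointwise bound $|f|^2\lessapprox E^{d-1}\sum_{T}|f_T|^2$ directly over $Y$, the pigeonholing in $\lambda$ and $M$ becomes unnecessary, and you recover the paper's argument.
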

\begin{proof}
When $m=1$, the proof is elementary. In this case, we may write $\W$ as a union of $\W_\theta= \W \cap \T_\theta$ for $O(E^{d-1})$ caps $\theta$. Thus, by Cauch-Schwarz, 
\[ \| f\|_{L^2(Y)}^2 \les E^{d-1} \sum_{\theta} \| \sum_{T\in \W_\theta} f_T \|_{L^2(Y)}^2 \less E^{d-1}    \sum_{\theta}  \sum_{T\in \W_\theta} \|f_T \|_{L^2(Y\cap 2T)}^2, \]
where we used the fact that $f_T$ is negligible outside of $2T$ and $\{2T\}$ has $\less 1$ overlap. Since $f_T$ is a wave packet, we have 
\[ \| f_T \|_{L^2(Y\cap 2T)}^2 \leq  | Y\cap 2T| \| f_T \|_{L^\infty}^2 \sim \frac{| Y\cap 2T| }{|T|} \| f_T\|_{L^2}^2. \]
Combining these bounds yields the claim for $m=1$. 

Now let $2\leq m \leq d$. Arguing as in the proof of  \Cref{thm:LpWeightedDecouplingConcentratedRefined}, we may assume that $\| f_T \|_2$ is comparable to a fixed constant for every $T\in \W$. By dyadic pigeonholing, there exist $M$ and a collection $\{ Q\}$ of $R^{1/2}$-cubes in $B_R$, such that each $Q$ is contained in $\sim M$ of the dilated tubes $\{ 2T\}_{T\in \W}$ and  
	$$\| f\|_{L^2(Y)} \less  \| f\|_{L^2(Y_1)}, \;\; Y_1 = \cup_{Q\in \mathcal{Q}} Y\cap Q.$$ 
	By H\"{o}lder and  \Cref{thm:refined_decouplingDuGeneral}, 
	we have
	\begin{equation}\label{eqn:L2Lp}
		\| f\|_{L^2(Y_1)} \leq |Y_1|^{\frac{1}{m+1}} \| f\|_{L^{p_m}(Y_1)}\less E^{\frac{d-m}{m+1}} (|Y_1|M)^{\frac{1}{m+1}} \Big( \sum_{T\in \W} \| f_T \|_{L^{p_m}}^{p_m} \Big)^{1/p_m}. 
	\end{equation}
	
	By \eqref{eqn:wavepacketNorm} and \eqref{eqn:L2Lp}, we obtain
	\begin{equation}\label{eqn:L2weighted}
		\| f\|_{L^2(Y_1)} \less E^{\frac{d-m}{m+1}} \Big(\frac{|Y_1|M}{|T|\# \W}\Big)^{\frac{1}{m+1}} \Big( \sum_{T\in \W} \| f_T \|_{L^{2}}^{2} \Big)^{\frac{1}{2}}.
	\end{equation}
	
	Finally, it suffices to observe that 
	\begin{align*}
		|Y_1| M  \sim  \sum_{Q\in \Q} \sum_{T\in \W: Q \subset 2T} |Y\cap Q|   \leq \sum_{T\in \W} |Y\cap 2T| \leq \# \W \max_{T\in \W} |Y\cap 2T|.
	\end{align*}
\end{proof}

The following is an immediate consequence of \Cref{thm:LpWeightedDecouplingConcentratedRefined}.
\begin{theorem}\label{thm:LpWeightedDecoupling}
	Let $2\leq m \leq d$  and $0\leq \alpha \leq d$. Let $f = \sum_{T \in \W} f_T$ be a sum of wave packets with $(R^{-1/2}, m)$-concentrated frequencies. Then for $2\leq p \leq p_m$, 
	\begin{align*}
		\| f \|_{L^p(Y)} \less \gamma_{\alpha,R}(Y)^{\frac{m+1}{m}(\frac{1}{p}-\frac{1}{p_m})} R^{-\frac{1}{2} ( d+1 -  \frac{m+1}{m} \alpha)(\frac{1}{p}-\frac{1}{p_m})} 
		M^{\frac{1}{2}-\frac{1}{p}}   \bigg( \sum_{T\in \W} \| f_T \|_{L^p}^p \bigg)^{\frac{1}{p}}.
	\end{align*}
\end{theorem}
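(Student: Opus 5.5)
We apply \Cref{thm:LpWeightedDecouplingConcentratedRefined} with $\beta = \alpha$. This choice is admissible because $m \leq d$ forces $\alpha + m - d \leq \alpha$. Once this is done, it only remains to bound the quantity $\mathcal{A}_{d,m,\alpha,\alpha}(R,Y)$ in terms of $\gamma_{\alpha,R}(Y)$ and powers of $R$. We then raise the result to the power $\frac{1}{m}(\frac{1}{p}-\frac{1}{p_m})$.

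We control each of the three factors in $\mathcal{A}$ by $\gamma_{\alpha,R}(Y)$ as follows.
\begin{itemize}
\item Since $\gamma_{\alpha,R^{1/2}}(Y)$ takes the supremum over a smaller range of scales than $\gamma_{\alpha,R}(Y)$, we have $\gamma_{\alpha,R^{1/2}}(Y) \leq \gamma_{\alpha,R}(Y)$.
\item By the same reasoning, $\overline{\gamma_{\alpha,R}}(Y) \leq \gamma_{\alpha,R}(Y)$.
\item Taking $\rho = R^{1/2}$ in the definition of $\gamma_{\alpha,R}$ gives $|Y\cap B_{R^{1/2}}| \leq \gamma_{\alpha,R}(Y) R^{\alpha/2}$ for every $R^{1/2}$-cube. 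Hence $\sup_{B_{R^{1/2}}} |Y\cap B_{R^{1/2}}|^{m-1} \leq \gamma_{\alpha,R}(Y)^{m-1} R^{\frac{\alpha(m-1)}{2}}$.
\end{itemize}

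Multiplying these bounds gives
\[
\mathcal{A}_{d,m,\alpha,\alpha}(R,Y) \leq \gamma_{\alpha,R}(Y)^{m+1}\, R^{\alpha + \frac{\alpha(m-1)}{2} - \frac{m(d+1)}{2}} = \gamma_{\alpha,R}(Y)^{m+1}\, R^{-\frac{m}{2}\left(d+1 - \frac{m+1}{m}\alpha\right)}.
\]
To verify the exponent of $R$, note that $\alpha + \frac{\alpha(m-1)}{2} = \frac{\alpha(m+1)}{2}$. Therefore the exponent equals $-\frac{m}{2}\big(d+1 - \frac{(m+1)\alpha}{m}\big)$.

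We now raise this bound to the power $\frac{1}{m}(\frac{1}{p}-\frac{1}{p_m})$, which is nonnegative because $p \leq p_m$. The result is
\[
\gamma_{\alpha,R}(Y)^{\frac{m+1}{m}(\frac{1}{p}-\frac{1}{p_m})}\, R^{-\frac{1}{2}(d+1-\frac{m+1}{m}\alpha)(\frac{1}{p}-\frac{1}{p_m})}.
\]
This is exactly the factor appearing in \Cref{thm:LpWeightedDecoupling}, and the remaining factors $M^{\frac12-\frac1p}$ and the $\ell^p$ sum carry over unchanged from \Cref{thm:LpWeightedDecouplingConcentratedRefined}. The only point requiring care is the exponent bookkeeping; there is no real obstacle.
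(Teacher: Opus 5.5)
Your proof is correct and matches the paper's intended derivation: the paper states this theorem as an immediate consequence of \Cref{thm:LpWeightedDecouplingConcentratedRefined}, and the argument is exactly to take $\beta=\alpha$ (admissible since $m\le d$) and bound each of the three density factors in $\mathcal{A}_{d,m,\alpha,\alpha}(R,Y)$ by $\gamma_{\alpha,R}(Y)$, as you did. Your exponent bookkeeping is also correct.
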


Via a two-step decoupling argument, \Cref{thm:LpWeightedDecoupling} implies the following more general result. 
\begin{theorem}\label{thm:linearWeightedDecoupling}
	Let $2\leq m \leq d$ and $0\leq \alpha \leq d$. Let $f = \sum_{T \in \W} f_T$ be a sum of wave packets with $(r, m)$-concentrated frequencies. Then for any $p_d\leq p\leq p_m$, 
	\begin{align*}
		\| f\|_{L^p(Y)} \less \gamma_{\alpha,r^{-2}}(Y)^{\frac{m+1}{m}(\frac{1}{p}-\frac{1}{p_m})} &r^{(d+1 - \frac{m+1}{m}\alpha)(\frac{1}{p}-\frac{1}{p_m})} \\ &(r^2 R)^{\frac{d+1}{2}(\frac{1}{p_d} -\frac{1}{p})}  
	M^{\frac{1}{2}-\frac{1}{p}}   \bigg( \sum_{T\in \W} \| f_T \|_{L^p}^p \bigg)^{\frac{1}{p}}. 
	\end{align*}
\end{theorem}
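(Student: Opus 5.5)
Throughout, $Y$ and $\{Q\}$ satisfy the standing assumptions of \Cref{thm:LpWeightedDecouplingConcentratedRefined}. The plan is a two-step decoupling. First decouple $f$ at scale $r$ using Bourgain--Demeter at exponent $p$. Then, for each piece, rescale the $(r,m)$-concentration into $((r^2R)^{-1/2},m)$-concentration at the smaller scale $R_1 = r^{-2}$. Finally apply \Cref{thm:LpWeightedDecoupling} at scale $R_1$ on each piece.

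\textbf{Step 1: coarse decoupling.} Partition $N_{R^{-1}}(S)$ into caps $\tau$ of angular size $r$. Write $f=\sum_\tau f_\tau$, where $f_\tau$ collects the $f_T$ with $\theta\subset\tau$. Use the concentration to identify the relevant caps: the normals $v(T)$ lie within $r$ of $V$, so the relevant $\tau$ lie in an $r$-neighborhood of the $(m-1)$-dimensional section $S\cap$(normals near $V$). Hence $\#\{\tau\}\lesssim r^{-(m-1)}$. Decoupling in the ``$m$-dimensional'' sense (flat decoupling in the $d-m$ directions is lossless here) gives
\[ \|f\|_{L^p(Y)} \lessapprox \big(\#\tau\big)^{\frac12-\frac1p}\ldots \]
The cleaner way is Bourgain--Demeter decoupling for $S\subset\R^d$ from scale $1$ down to $r$, applied on balls of radius $r^{-2}$. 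For $p_d\le p\le p_m$ this incurs, at the final step, the factor $(r^2R)^{\frac{d+1}{2}(\frac1{p_d}-\frac1p)}$ from decoupling the tubes of length $R$ inside each $\tau$ down to scale $R^{-1/2}$ via parabolic rescaling. Concretely, I would organize it as follows: first $\ell^p$-decouple $f$ into $f_\tau$ on $r^{-2}$-balls, losing only $\lessapprox 1$ times a power of $\#\tau$ that will be absorbed. Then, for each $\tau$, apply parabolic rescaling to map $\tau$ to the full surface at scale $r^2R$. Apply the $\ell^p$ decoupling (Bourgain--Demeter, $p\ge p_d$) for $r^2R$ rescaled caps, losing $(r^2R)^{\frac{d+1}{2}(\frac1{p_d}-\frac1p)}$ (the $\ell^p$ vs $\ell^2$ conversion at $p>p_d$).

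\textbf{Step 2: weighted piece.} The main step is to handle each $r^{-2}$-ball $B$ using \Cref{thm:LpWeightedDecoupling} at scale $R_1=r^{-2}$. On $B$, the function $f$ restricted to frequencies is a sum of wave packets at scale $R_1$ (caps of size $r$). Concentration at angle $r=R_1^{-1/2}$ is exactly $(R_1^{-1/2},m)$-concentration. Apply \Cref{thm:LpWeightedDecoupling} with $Y\cap B$ and $\gamma_{\alpha,R_1}(Y\cap B)\le\gamma_{\alpha,r^{-2}}(Y)$. The factor $R_1^{-\frac12(d+1-\frac{m+1}{m}\alpha)(\cdots)}$ becomes $r^{(d+1-\frac{m+1}{m}\alpha)(\cdots)}$, as claimed.

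\textbf{Main obstacle.} The hard part is the multiplicity $M$. At scale $R_1$ we need the number $M_1$ of scale-$R_1$ tubes containing an $R_1^{1/2}$-cube to be controlled so that after combining $M_1^{\frac12-\frac1p}$ with the second-stage decoupling we recover $M^{\frac12-\frac1p}$ for the original $R^{1/2}$-cubes and tubes. I would pigeonhole the wave packets of each $f_\tau$ by size and by which scale-$R_1$ tubes are active. Tube counts scale consistently because the $R$-tubes of one $\tau$ lie inside a fat $r^{-1}\times\cdots\times r^{-1}\times R$ tube whose intersection with $B$ is one $R_1$-tube; this is as in the refined-decoupling induction of \cite{GIOW,du2023weighted}. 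One then checks that $M_1\cdot(\text{inner multiplicity})\lesssim M$ up to $R^\eps$. Hölder in the sum over $B$ and $\tau$ then assembles $\big(\sum_T\|f_T\|_p^p\big)^{1/p}$.
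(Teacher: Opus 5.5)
You have the right architecture, the same two-step scheme the paper relies on (it defers to the argument of Du--Ou--Ren--Zhang). The weighted estimate is used at scale $R_1=r^{-2}$, where $(r,m)$-concentration is exactly $(R_1^{-1/2},m)$-concentration and $\gamma_{\alpha,R_1}(Y\cap B)\le \gamma_{\alpha,r^{-2}}(Y)$. Each $r$-cap $\tau$ is then decoupled into the $\theta$'s by parabolic rescaling at scale $R_2=r^2R$. But both steps are mis-specified.

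Nothing can absorb the $(\#\tau)^{\frac12-\frac1p}$ loss of a plain decoupling of $f$ into the $f_\tau$. The coarse step must be \Cref{thm:LpWeightedDecoupling} itself, applied to all of $f\psi_B$ (the localization of $f$ to an $r^{-2}$-cube $B$), as in your Step 2. It cannot be applied to rescaled pieces as your opening paragraph suggests: inside one $r$-cap all normals are already within $O(r)$ of each other, so the concentration hypothesis becomes vacuous after rescaling.

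For the fine step, Bourgain--Demeter decoupling produces no multiplicity factor. The quantity $(r^2R)^{\frac{d+1}{2}(\frac{1}{p_d}-\frac1p)}$ is the $\ell^2$ decoupling constant for $p\geq p_d$, and converting $\ell^2$ to $\ell^p$ costs a further $(r^2R)^{\frac{d-1}{2}(\frac12-\frac1p)}$. What you need is refined decoupling for $p\geq p_d$, namely $\|g\|_{L^p(\cup Q_2)}\lessapprox R_2^{\frac{d+1}{2}(\frac{1}{p_d}-\frac1p)} M_2^{\frac12-\frac1p}(\sum_T \|g_T\|_{L^p}^p)^{1/p}$. After pigeonholing $\|g_T\|_{L^p}$, this follows from \Cref{thm:refined_decouplingDuGeneral} with $m=d$ at $p=p_d$, H\"older, the bound $\|g\|_{L^\infty(Q_2)}\lessapprox M_2\max_T\|g_T\|_{L^\infty}$, and $M_2\lesssim R_2^{(d-1)/2}$.

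The genuine gap is the multiplicity bookkeeping, which is the real content of the omitted argument. Your geometric justification is false. Tubes $T\in\W_\tau$ have width $R^{1/2}\geq r^{-1}$, and those through a common point fan out over an $rR\times\cdots\times rR\times R$ box, so they do not lie in an $r^{-1}$-wide tube. The correct picture is the reverse. Each scale-$R_1$ wave packet $\mathcal{T}$ of $f_\tau\psi_B$ (dimensions $r^{-1}\times\cdots\times r^{-1}\times r^{-2}$) lies in a box $P(\mathcal{T})=L_\tau^{-1}(Q_2)$ of dimensions $R^{1/2}\times\cdots\times R^{1/2}\times r^{-1}R^{1/2}$, the preimage of a rescaled $R_2^{1/2}$-cube. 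The fine multiplicity $\mu_\tau(P)=\#\{T\in\W_\tau: P\subset 2T\}$ lives on these boxes.

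Moreover, $M_1\cdot M_2\lesssim M$ does not follow from pigeonholing sizes. The coarse estimate involves a maximum over $r^{-1}$-cubes of the number of active fat tubes, the fine one a maximum over boxes $P$ for each $\tau$, and these maxima can occur at unrelated places. The missing idea is to sort the coarse wave packets by fine multiplicity \emph{before} the coarse step:
\begin{itemize}
\item Write $f\psi_B=\sum_\lambda f_{B,\lambda}$, where $f_{B,\lambda}$ collects the $f_{\tau,\mathcal{T}}$ with $\mu_\tau(P(\mathcal{T}))\sim\lambda$. Each $f_{B,\lambda}$ is still a sum of scale-$R_1$ wave packets with concentrated frequencies, so the weighted estimate applies with some multiplicity $M_{1,\lambda}$.
\item One has $\lambda M_{1,\lambda}\lessapprox M$. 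The fat tubes through an $r^{-1}$-cube $Q_1\subset Q$ come from essentially distinct caps $\tau$, and each contributes $\sim\lambda$ tubes $T$ with $P\subset 2T$, whose slight enlargements contain $Q$.
\item Summing over $B$ leaves $\sum_\tau\|f_\tau\|_{L^p(Y_{\tau,\lambda})}^p$ with $Y_{\tau,\lambda}=\bigcup\{P:\mu_\tau(P)\sim\lambda\}$. The rescaled refined decoupling applies there with multiplicity $\lesssim\lambda$.
\item The factors combine to $(\lambda M_{1,\lambda})^{\frac12-\frac1p}\lessapprox M^{\frac12-\frac1p}$, and summing over the $O(\log R)$ values of $\lambda$ gives the theorem.
\end{itemize}
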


The proof that \Cref{thm:LpWeightedDecoupling} $\implies$ \Cref{thm:linearWeightedDecoupling} is identical to that of the implication  Theorem 1.2(b) $\implies$  Theorem 1.2(c) in \cite{du2023weighted}. Therefore, we omit the proof and refer the reader to \cite{du2023weighted}.

Next, we establish a weighted  $L^2$ estimate from \Cref{thm:linearWeightedDecoupling}, which will be applied to the Falconer distance set problem. For a non-negative measurable function $H$ on $\R^d$, we denote $H(E) = \int_E H$ and 
\[ \gamma_{\alpha,R}(H) :=   \sup_{1\leq \rho \leq R} \sup_{B_\rho} \rho^{-\alpha} H(B_\rho).\]
We say that $H$ is essentially constant at scale $s$ if $H(x) \sim H(y)$ whenever $|x-y|\leq s$. 

\begin{theorem}\label{thm:measureVer}
	Let $2\leq m \leq d$. Let $f = \sum_{T \in \W} f_T$ be a sum of wave packets with $(r, m)$-concentrated frequencies. 
	Let $H$ be a non-negative measurable function on $\R^d$ which is essentially
	constant at scale $1$ and satisfies $0\leq H\les 1$. Assume that $\gamma_{\alpha,r^{-2}}(H) \les 1$. Then for any $p_d\leq p\leq p_m$, 
	\[ \| f\|_{L^2(H)} \less \big(  |T|^{-1} \max_{T\in \W} H(2T) \big)^{\frac{1}{2}-\frac{1}{p}} r^{(d+1 - \frac{m+1}{m}\alpha)(\frac{1}{p}-\frac{1}{p_m})} (r^2 R)^{\frac{d+1}{2}(\frac{1}{p_d} -\frac{1}{p})} \| f\|_{L^2}.  \]
\end{theorem}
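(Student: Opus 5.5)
The plan is to reduce \Cref{thm:measureVer} to \Cref{thm:linearWeightedDecoupling} in the standard way (as in \cite{du2023weighted}). First I would replace the weight $H$ by level sets. Since $0\leq H\les 1$ and $H$ is essentially constant at scale $1$, I would dyadically decompose $H\sim\sum_{\lambda} \lambda 1_{Y_\lambda}$. Here $\lambda$ ranges over dyadic numbers in $[R^{-C},1]$, and each $Y_\lambda=\{H\sim\lambda\}$ is (essentially) a union of unit cubes. The part where $H\leq R^{-C}$ is trivially controlled by $R^{-C/2}\|f\|_{L^2}$. After pigeonholing it suffices to bound $\lambda^{1/2}\|f\|_{L^2(Y_\lambda)}$ for a single $\lambda$, at a cost of $\log R$.

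Second, within $Y:=Y_\lambda$, I would normalize the wave packets and apply refined decoupling. Dyadically pigeonhole so that $\|f_T\|_{L^2}$ is comparable for all $T$ in a subcollection $\W'$; this loses only $\lessapprox 1$. Then pigeonhole the $R^{1/2}$-cubes $Q$ so that each relevant $Q$ lies in $\sim M$ tubes $2T$, and replace $Y$ by $Y_1=\cup Q\cap Y$, as in the proof of \Cref{thm:b}. By H\"older,
\[
\|f\|_{L^2(Y_1)}\le |Y_1|^{\frac12-\frac1p}\|f\|_{L^p(Y_1)}.
\]
I would then apply \Cref{thm:linearWeightedDecoupling} to $\|f\|_{L^p(Y_1)}$. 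Since $\lambda|Y\cap B_\rho|\les H(B_\rho)\les\rho^\alpha$, we have $\gamma_{\alpha,r^{-2}}(Y)\les\lambda^{-1}$. This produces the factor $\lambda^{-\frac{m+1}{m}(\frac1p-\frac1{p_m})}$ together with the claimed powers of $r$ and $r^2R$, the factor $M^{\frac12-\frac1p}$, and $(\sum\|f_T\|_p^p)^{1/p}$. Next I would convert this $\ell^p$ sum to $\ell^2$ using \eqref{eqn:wavepacketNorm}, which gives a factor $(\#\W'|T|)^{-(\frac12-\frac1p)}$. Finally, as in \Cref{thm:b},
\[
|Y_1|M\les\sum_T|Y\cap 2T|\les \lambda^{-1}\#\W'\max_T H(2T).
\]
Combining these gives the factor $(|T|^{-1}\max_T H(2T))^{\frac12-\frac1p}\lambda^{-(\frac12-\frac1p)}$.

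Third, I would collect the powers of $\lambda$. The total is
\[
\lambda^{\frac12-(\frac12-\frac1p)-\frac{m+1}{m}(\frac1p-\frac1{p_m})}=\lambda^{\frac1p-\frac{m+1}{m}(\frac1p-\frac1{p_m})}.
\]
Since $\lambda\le1$, I need this exponent to be nonnegative, i.e.\ $\frac1p\ge\frac{m+1}{m}(\frac1p-\frac1{p_m})$, or equivalently $\frac{m+1}{p_m}\ge\frac1p$. Using $\frac{m+1}{p_m}=\frac{m-1}{2}\ge\frac12\ge\frac1p$, this holds for $m\ge2$. This exponent bookkeeping is the step I would check most carefully.

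The main obstacle I expect is exactly this $\lambda$-accounting: making sure the density hypothesis enters only through $\gamma_{\alpha,r^{-2}}(Y_\lambda)\les\lambda^{-1}$, and that the essential-constancy of $H$ justifies $\lambda|Y_\lambda\cap 2T|\les H(2T)$. A secondary point is confirming that the wave-packet normalization and the $M$-pigeonholing are compatible with the hypotheses of \Cref{thm:linearWeightedDecoupling}, with $\{Q\}$ chosen so that $Y_1\subset\cup Q$.
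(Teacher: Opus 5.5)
Your proposal is correct and follows the paper's proof essentially step for step. The shared steps are: level-set pigeonholing of $H$, normalizing $\|f_T\|_{L^2}$, pigeonholing $M$, H\"older plus \Cref{thm:linearWeightedDecoupling} with $\gamma_{\alpha,r^{-2}}(Y_\lambda)\les\lambda^{-1}$, and \eqref{eqn:wavepacketNorm}, leading to the same nonnegative $\lambda$-exponent $\frac1m(\frac{m-1}{2}-\frac1p)$. The only differences are cosmetic: the paper converts $|Y_{\lambda,M}|\les\lambda^{-1}H(Y_{\lambda,M})$ at the H\"older step, whereas you convert at the counting step $|Y_1|M\les\lambda^{-1}\#\W'\max_T H(2T)$, and you leave implicit the routine reduction to a single $R$-cube (keeping only wave packets whose $2T$ meets it), which is needed so that the cubes $\{Q\}$ satisfy the standing hypotheses of \Cref{thm:linearWeightedDecoupling}.
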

\begin{proof}
We may assume that $\| f_T \|_{L^2}$ is comparable to a fixed constant for every $T\in \W$, arguing as in the proof of \Cref{thm:LpWeightedDecouplingConcentratedRefined}.

Since $H$ is essentially constant at scale 1, it is comparable to a function which is constant on each lattice unit cube. Thus, without loss of generality, we may assume that $H$ is constant on lattice unit cubes. For $\la>0$, let $Y_\lambda = \{ x: H(x) \sim \lambda\}$ denote the union of lattice unit cubes where $H$ is comparable to $\la$. By dyadic pigeonholing, there exists  a dyadic $\lambda \in [R^{-200d}, 1]$ such that 
    \[ \| f\|_{L^2(H)} \less \lambda^{1/2} \| f\|_{L^2(Y_\la)} + R^{-100d} \| f\|_{L^2}. \]
    The term involving $R^{-100d}$ is negligible and will be omitted in what follows.

It suffices to prove the estimate under the additional assumption that
$Y_\lambda$ is contained in a fixed $R$-cube $B_R$ and each dilated tube $2T$ intersects $B_R$ for each $T\in \W$. Indeed, after partitioning
$\mathbb R^d$ into lattice $R$-cubes, one may apply the local estimate on each
cube, keeping only the wave packets contributing to $B_R$. Summing up these local estimates yields the claimed bound. 

We partition $Y_\la$ by $R^{1/2}$-cubes. Then by dyadic pigeonholing, there exists $M\geq 1$ and a collection of $R^{1/2}$-cubes $\{ Q\}$ such that each $Q$ is contained  in $\sim M$ dilated tubes $\{ 2T: T\in \W\}$ and 
  \[ \| f\|_{L^2(Y_\la)} \less \| f\|_{L^2(Y_{\la,M})}, \; \text{ where } \; Y_{\la,M} = Y_\la \cap (\cup Q). \]

 Note that 
\[ |Y_{\la,M}| \les \la^{-1} H(Y_{\la,M}).\]
  By H\"older,
  \begin{align*}
  	\| f\|_{L^2(H)} \less \lambda^{\frac{1}{2}} |Y_{\la,M}|^{\frac{1}{2}-\frac{1}{p}} \| f\|_{L^p(Y_{\la,M})}  \les \la^{\frac{1}{p}} H(Y_{\la,M})^{\frac{1}{2}-\frac{1}{p}} \| f\|_{L^p(Y_{\la,M})} .
  \end{align*}

By \Cref{thm:linearWeightedDecoupling}, we have 
\begin{align*}
	\| f\|_{L^p(Y_{\la,M})}  \less  &\gamma_{\alpha,r^{-2}}(Y_{\la,M})^{\frac{m+1}{m}(\frac{1}{p}-\frac{1}{p_m})}  \\ &r^{(d+1 - \frac{m+1}{m}\alpha)(\frac{1}{p}-\frac{1}{p_m})} (r^2 R)^{\frac{d+1}{2}(\frac{1}{p_d} -\frac{1}{p})}   M^{\frac{1}{2}-\frac{1}{p}}   \bigg( \sum_{T\in \W} \| f_T \|_{L^p}^p \bigg)^{\frac{1}{p}}.
\end{align*}

Note that 
\[  \gamma_{\alpha,r^{-2}}(Y_{\la,M}) \les \la^{-1} \gamma_{\alpha,r^{-2}}(H) \les \la^{-1}. \]
Combining these estimates and using \eqref{eqn:wavepacketNorm}, we get
\begin{equation}\label{eqn:a}
	\begin{split}
		\| f\|_{L^2(H)} \less  \la^{\frac{1}{p}-\frac{m+1}{m}(\frac{1}{p}-\frac{1}{p_m})} \Big(\frac{H(Y_{\la,M}) M}{|\W| |T| }\Big)^{\frac{1}{2}-\frac{1}{p}} 
		r^{(d+1 - \frac{m+1}{m}\alpha)(\frac{1}{p}-\frac{1}{p_m})} (r^2 R)^{\frac{d+1}{2}(\frac{1}{p_d} -\frac{1}{p})}  \| f\|_{L^2}.
	\end{split}
\end{equation}

The exponent of $\lambda$ is
\[
\frac1p-\frac{m+1}{m}\left(\frac1p-\frac1{p_m}\right)
=
\frac1m\left(\frac{m-1}{2}-\frac1p\right),
\]
which is non-negative for the relevant range of $p$. Hence $\la^{\frac{1}{p}-\frac{m+1}{m}(\frac{1}{p}-\frac{1}{p_m})} \leq 1$. Note that
\[ H(Y_{\la,M}) M \sim \sum_{Q} \sum_{T\in \W: Q\subset 2T} H(Y_{\la,M} \cap Q) \leq  \sum_{T\in \W}  H(Y_{\la,M} \cap 2T)\leq | \W| \max_{T\in \W} H(2T). \]
Combining these observations with \eqref{eqn:a} finishes the proof.
\end{proof}

\section{Applications to the Falconer distance set problem}\label{sec:Falconer}
In this section, we verify \Cref{thm:Falconer}. We first describe the framework proposed in \cite{du2023weighted}. The analysis of the good measure part essentially reduces to the following problem:

Let $d\geq 3$ and $\frac{d}{2}< \alpha< \frac{d+1}{2}$. 
Let $\mu$ be a probability measure supported on the unit ball $B_1(0)$ such that 
\begin{equation}\label{eqn:measureAlpha}
	\sup_{0< \rho \leq 1 } \sup_{B_\rho} \rho^{-\alpha} \mu(B_\rho) \les 1.
\end{equation}
Let $m=\lfloor d/2 \rfloor+1$ so that $m-1<\alpha \leq m$. Suppose that 
\begin{itemize}
	\item $\W \subset \T(R)$ is a collection of tubes intersecting $B_R(0)$ associated with the unit sphere  $S=\S^{d-1}$.
	\item $g(x)= f(Rx)$, where $f = \sum_{T\in \W} f_T$ has $(r,m)$-concentrated frequencies for some $r\in [R^{-1/2},1]$. Thus, $g$ is a sum of wave packets associated with tubes $T_{R^{-1}} := R^{-1}T$ of dimensions $R^{-1/2} \times \cdots \times R^{-1/2} \times 1$.
	\item Let $\mu_{R^{-1}} = \mu* \eta_{R^{-1}}$, where $\eta_{R^{-1}} (x)= R^{d}(1+R|x|)^{-100d}$. Each $T\in \W$ is good in the sense that   
	\[   \mu_{R^{-1}} (2 T_{R^{-1}}) \less R^{-\alpha/2} r^{-(\alpha-(m-1))}.\footnote{We have slightly modified the definition from \cite{du2023weighted} by replacing $\mu$ with the regularized version $\mu_{R^{-1}}$, which makes the application of refined decoupling cleaner. It appears that this change does not affect the treatment of the bad tubes.} \]
\end{itemize}
 Under this setup, we would like to determine the largest exponent $\Gamma(d,\alpha)$ for which  
\begin{equation}\label{eqn:boundFalconer}
	\| g \|_{L^2(\mu)}^2 \less R^{-\Gamma(d,\alpha) } R^{d-1} \| g \|_{L^2}^2,
\end{equation}
where the implicit constant does not depend on $r\in [R^{-1/2},1]$.

Given the bound \eqref{eqn:boundFalconer}, a threshold for the Falconer distance set problem can be obtained by solving $\alpha + \Gamma(d,\alpha) > d$ for $\alpha$; see \cite[Section 4.3]{du2023weighted}.  In that paper, it was shown that \eqref{eqn:boundFalconer} holds with 
\begin{equation}\label{eqn:duGamma}
	\Gamma(d,\alpha) = \begin{cases}
		\frac{d\alpha}{d+1} & d\geq 4\\ 
		\alpha - \frac{\alpha^2}{6} & d=3.
	\end{cases}
\end{equation}

We present a weighted $L^2$ estimate for the above problem.
\begin{theorem}\label{thm:decFalconer}
	Let $2\leq m \leq d$. Let $g$ and $\mu$ be as above. Then for any $p_d\leq p\leq p_m$, 
	\begin{align*}
		\| g\|_{L^2(\mu)} \less R^{\frac{d-\alpha}{p}}  \Big(R^{\frac{d-1}{2}} \max_{T\in \W} \mu_{R^{-1}}(2T_{R^{-1}}) \Big)^{\frac{1}{2}-\frac{1}{p}}  r^{(d+1 - \frac{m+1}{m}\alpha)(\frac{1}{p}-\frac{1}{p_m})} (r^2 R)^{\frac{d+1}{2}(\frac{1}{p_d} -\frac{1}{p})} \| g\|_{L^2}.
	\end{align*}
\end{theorem}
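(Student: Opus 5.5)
We deduce the estimate from \Cref{thm:measureVer} by rescaling $\mu$ to a density $H$ at scale $R$. First, since $\hat{g}$ is supported in the $1$-neighborhood of the dilated sphere $R\S^{d-1}$, $g$ is essentially constant at scale $R^{-1}$ in the sense that $|g|^2 \les |g|^2 * \eta_{R^{-1}}$ (after replacing $|g|$ by a slightly larger majorant with the same wave packet structure, as is standard). Hence
\[ \| g\|_{L^2(\mu)}^2 \les \int |g|^2 \, d\mu_{R^{-1}} . \]
Changing variables $x = R^{-1} y$ gives $\int |g|^2 \mu_{R^{-1}}(x)\,dx = R^{-d}\int |f(y)|^2 \mu_{R^{-1}}(R^{-1}y)\,dy$. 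We therefore set
\[ H(y) := R^{-(d-\alpha)} \mu_{R^{-1}}(R^{-1} y), \]
so that $\| g\|_{L^2(\mu)}^2 \les R^{-\alpha} \| f\|_{L^2(H)}^2$.

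Next we check the hypotheses of \Cref{thm:measureVer}. The function $\mu_{R^{-1}}$ is essentially constant at scale $R^{-1}$, because $\eta_{R^{-1}}$ has this property. Hence $H$ is essentially constant at scale $1$. By \eqref{eqn:measureAlpha}, $\mu_{R^{-1}}(B_{R^{-1}}) \les R^{-\alpha}$, so $\mu_{R^{-1}} \les R^{d-\alpha}$ pointwise and thus $H\les 1$. For a cube $B_\rho$ with $1\le \rho\le R$,
\[ H(B_\rho) = R^{-(d-\alpha)} R^{d} \mu_{R^{-1}}(B_{\rho/R}) \les R^{\alpha} (\rho/R)^{\alpha} = \rho^\alpha . \]
This gives $\gamma_{\alpha,r^{-2}}(H)\le \gamma_{\alpha,R}(H)\les 1$, since $r^{-2}\le R$. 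The ball estimate on $\mu_{R^{-1}}$ at scale $\rho/R \ge R^{-1}$ follows from \eqref{eqn:measureAlpha} together with the tail of $\eta_{R^{-1}}$.

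Applying \Cref{thm:measureVer} to $f$ and $H$ yields
\[ \|f\|_{L^2(H)} \less \big(|T|^{-1}\max_T H(2T)\big)^{\frac12-\frac1p} \, r^{(\cdots)} (r^2R)^{(\cdots)} \|f\|_{L^2} . \]
We now convert each quantity back to $g$ and $\mu$:
\begin{itemize}
\item $H(2T) = R^{\alpha}\mu_{R^{-1}}(2T_{R^{-1}})$ and $|T| = R^{\frac{d+1}{2}}$, so $|T|^{-1}H(2T) = R^{\alpha - d}\, R^{\frac{d-1}{2}}\mu_{R^{-1}}(2T_{R^{-1}})$.
\item $\|f\|_{L^2} = R^{d/2}\|g\|_{L^2}$.
\end{itemize}
Collecting powers of $R$ gives
\[ \|g\|_{L^2(\mu)} \less R^{-\frac{\alpha}{2}} R^{\frac d2} R^{(\alpha-d)(\frac12-\frac1p)} \Big(R^{\frac{d-1}{2}}\max_T \mu_{R^{-1}}(2T_{R^{-1}})\Big)^{\frac12-\frac1p} \cdots \|g\|_{L^2}. \]
The total exponent of $R$ is $\frac{d-\alpha}{2} - (d-\alpha)(\frac12-\frac1p) = \frac{d-\alpha}{p}$, which matches the claimed bound.

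The main obstacle is the first step, passing from $\mu$ to $\mu_{R^{-1}}$. The natural approach is to note that $|g|^2$ has Fourier support in a ball of radius $\sim R$, and then to use a locally constant lemma: $|g| \les \sum_k 2^{-100dk}\sup_{B(x,2^kR^{-1})}|g|$, majorized by $|g|*\eta_{R^{-1}}$-type averages of a modulated copy. The integral against $\mu$ is then dominated by an integral against $\mu_{R^{-1}}$, up to replacing $f$ by functions with the same wave packet supports and comparable $L^2$ norm. This requires some care, but it is routine. Everything else is bookkeeping with the rescaling.
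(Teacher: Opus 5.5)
Your proposal is correct and follows the paper's proof essentially step for step. It uses the same rescaled density $H(y)=R^{-(d-\alpha)}\mu_{R^{-1}}(R^{-1}y)$, the same check of the hypotheses of \Cref{thm:measureVer}, and the same conversion $|T|^{-1}H(2T)=R^{\alpha-d}R^{\frac{d-1}{2}}\mu_{R^{-1}}(2T_{R^{-1}})$, which yields the exponent $\frac{d-\alpha}{p}$.

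The step you flag as the main obstacle is immediate and needs no majorant or replacement of $f$. Since $|g|^2$ has Fourier support in $B_{4R}(0)$, one has $|g|^2=|g|^2*\check{\psi}$ for a bump $\psi$ equal to $1$ on $B_{4R}(0)$. Hence $|g|^2\le |g|^2*|\check{\psi}|\les |g|^2*\eta_{R^{-1}}$ pointwise, which is exactly the inequality the paper uses.
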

A similar estimate with the factor $r^{(d+1 - \frac{m+1}{m}\alpha)(\frac{1}{p}-\frac{1}{p_m})}$ replaced by $r^{(d - \alpha)(\frac{1}{p}-\frac{1}{p_m})}$ is implicit in \cite{du2023weighted}. 

Before giving the proof, we explore the implications  of \Cref{thm:decFalconer}. For $d\geq 4$, \Cref{thm:decFalconer} at $p=p_d$ recovers \eqref{eqn:duGamma}, whereas for $d=3$, it yields a slight improvement.   \Cref{thm:Falconer} then follows by solving $\alpha+\Gamma(3,\alpha) > 3$ for $\alpha$ and applying the following result.
\begin{prop}\label{cor:decFalconer}
	For $d=3$,  \eqref{eqn:boundFalconer} holds with
	\[ \Gamma(3,\alpha) = \begin{cases}
		\frac{3\alpha}{4} & \frac{3}{2} < \alpha \leq \frac{14}{9}, \\
		\frac{8\alpha}{3(2+\alpha)} & \alpha > \frac{14}{9}.
	\end{cases} \]
\end{prop}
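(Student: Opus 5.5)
Since $m=\lfloor 3/2\rfloor+1=2$ when $d=3$, I would specialize \Cref{thm:decFalconer} to $d=3$, $m=2$, so that $p_d=p_3=4$ and $p_m=p_2=6$. The good-tube hypothesis gives $\mu_{R^{-1}}(2T_{R^{-1}})\less R^{-\alpha/2}r^{-(\alpha-1)}$, and I would substitute this into the factor $\big(R\max_T\mu_{R^{-1}}(2T_{R^{-1}})\big)^{\frac12-\frac1p}$. Then I square the resulting estimate. Writing $s=1/p\in[\frac16,\frac14]$ and $r=R^{-t}$ with $t\in[0,\frac12]$, this yields $\|g\|_{L^2(\mu)}^2\less R^{E(s,t)}\|g\|_{L^2}^2$, where
\[ E(s,t)=2(3-\alpha)s+\big(1-\tfrac{\alpha}{2}\big)(1-2s)+t(\alpha-1)(1-2s)-t(8-3\alpha)\big(s-\tfrac16\big)+(1-2t)(1-4s). \]
Comparing with \eqref{eqn:boundFalconer} at $d=3$, it then suffices to show that $\max_{t\in[0,1/2]}\min_{s\in[1/6,1/4]}E(s,t)\le 2-\Gamma(3,\alpha)$. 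This is legitimate because $p$ may be chosen depending on $r$: only the final constant has to be uniform in $r$.

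For fixed $t$, the function $E$ is affine in $s$, so the minimum over $s$ is attained at $p=4$ or $p=6$. A direct computation gives the two endpoint values.
\begin{itemize}
\item At $s=\frac14$: $E=2-\frac{3\alpha}{4}+t\,\frac{9\alpha-14}{12}$.
\item At $s=\frac16$: $E=2-\frac{2\alpha}{3}+t\,\frac{2\alpha-4}{3}$.
\end{itemize}
If $\alpha\le\frac{14}{9}$, the $t$-coefficient at $p=4$ is nonpositive. Hence $p=4$ alone gives $E\le 2-\frac{3\alpha}{4}$ uniformly in $t$, which is the first case of the proposition.

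If $\alpha>\frac{14}{9}$, I would use $\min(E|_{p=4},E|_{p=6})$, a minimum of an increasing and a decreasing affine function of $t$ (the second decreases because $\alpha<2$). Its maximum over $t$ sits at the crossing point, which solves $t\cdot\frac{\alpha}{12}\cdot(\alpha+2)/\alpha\cdot\alpha=\frac{\alpha}{12}$, i.e. $t=\frac{\alpha}{\alpha+2}$. This point lies in $[0,\frac12]$ since $\alpha\le 2$. Substituting gives
\[ E=2-\frac{2\alpha}{3}+\frac{\alpha(2\alpha-4)}{3(\alpha+2)}=2-\frac{8\alpha}{3(\alpha+2)}, \]
which is the second case. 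Concretely, we use $p=4$ when $t\le\frac{\alpha}{\alpha+2}$ and $p=6$ otherwise.

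The main obstacle is purely bookkeeping. One must check that the normalization $g(x)=f(Rx)$ matches the factor $R^{d-1}$ in \eqref{eqn:boundFalconer}, as \Cref{thm:decFalconer} is already stated for $g$. One must also check that the $R^\eps$ losses and the implicit constants do not depend on $r$. I expect both to follow directly from the statement of \Cref{thm:decFalconer}, with the proof itself reducing to the computation above.
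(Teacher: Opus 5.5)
Your proposal is correct and follows the paper's proof. Like the paper, you specialize \Cref{thm:decFalconer} to $d=3$, $m=2$ at $p=4$ and $p=6$ and insert the good-tube bound. You then use $p=4$ alone when $\alpha\le\frac{14}{9}$, and otherwise balance the two estimates at the critical scale $r=R^{-\alpha/(2+\alpha)}$. Your endpoint exponents and the final value $2-\frac{8\alpha}{3(\alpha+2)}$ all check out.

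The only slip is the displayed crossing equation, which is garbled. It should read $t\,\frac{\alpha+2}{12}=\frac{\alpha}{12}$, since the difference of the two $t$-coefficients is $\frac{(9\alpha-14)-(8\alpha-16)}{12}$. The value $t=\frac{\alpha}{\alpha+2}$ you then state is correct.
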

\begin{proof}
	Let $d=3$ and $m=2$. Applying \Cref{thm:decFalconer} for  $p=p_d=4$ and $p=p_m = 6$ yields 
	\begin{equation}\label{eqn:p4}
		\| g\|_{L^2(\mu)} \less R^{\frac{3-\alpha}{4}}  (R^{1-\frac{\alpha}{2}} r^{1-\alpha} )^{\frac{1}{2}-\frac{1}{4}} r^{(4 - \frac{3}{2}\alpha)(\frac{1}{4}-\frac{1}{6})} \| g\|_{L^2} = R^{-\frac{3}{8}\alpha} r^{-\frac{3}{8}(\alpha-\frac{14}{9})} R \| g\|_{L^2}.
	\end{equation}
	and 
	\begin{equation}\label{eqn:p6}
		\| g\|_{L^2(\mu)} \less R^{\frac{3-\alpha}{6}}  (R^{1-\frac{\alpha}{2}} r^{1-\alpha} )^{\frac{1}{2}-\frac{1}{6}} (r^2 R)^{2(\frac{1}{4} - \frac{1}{6})} \| g\|_{L^2} = R^{-\frac{1}{3}\alpha} r^{\frac{2-\alpha}{3}} R \| g\|_{L^2}.
	\end{equation}
	When $\frac{3}{2}<\alpha\leq \frac{14}{9}$, \eqref{eqn:p4} yields \eqref{eqn:boundFalconer}  with $\Gamma(3,\alpha) = \frac{3\alpha}{4}$. 
	
	When $\alpha>\frac{14}{9}$, \eqref{eqn:p4} and \eqref{eqn:p6} are effective for large and small values of $r$, respectively. Balancing these bounds, we obtain
	\[ 	\| g\|_{L^2(\mu)} \less R^{-\frac{4\alpha}{3(2+\alpha)}} R \| g\|_{L^2}, \]
	for all $r\in [R^{-1/2},1]$; here the scale $r= R^{-\frac{\alpha}{2+\alpha}}$ is critical.  This yields  \eqref{eqn:boundFalconer} with $\Gamma(3,\alpha) = \frac{8\alpha}{3(2+\alpha)}$. 
\end{proof}

It remains to verify \Cref{thm:decFalconer}.
\begin{proof}[Proof of \Cref{thm:decFalconer}]
	Since $\hat{g}$ is supported on $B_{2R}(0)$, we have $|g|^2 \les |g|^2*\eta_{R^{-1}}$, which yields $\| g\|_{L^2(\mu)} \les \| g\|_{L^2(\mu_{R^{-1}} )}$. Let 
	\[ H(x) = R^{-(d-\alpha)}\mu_{R^{-1}}  (R^{-1}x).\]
	Then by \eqref{eqn:measureAlpha}, $0\leq H\les 1$ and $H$ satisfies $\gamma_{\alpha,R}(H) \les 1$.
	Moreover, $H$ is essentially constant at scale 1 since $\mu_{R^{-1}}$ is essentially constant at scale $R^{-1}$. Therefore, we may apply  \Cref{thm:measureVer}.
	
	Note that 
	\[ \frac{ \| g\|_{L^2(\mu_{R^{-1}} )}^2}{\| g\|_{L^2}^2} = \frac{ R^{-\alpha} \| f\|_{L^2(H)}^2}{ R^{-d} \| f\|_{L^2}^2}. \]
	Applying \Cref{thm:measureVer}, we obtain
	\[  \frac{ \| g\|_{L^2(\mu_{R^{-1}} )} }{ \| g\|_{L^2}} \less R^{\frac{d-\alpha}{2}}  \big( \max_{T\in \W} |T|^{-1} H(2T) \big)^{\frac{1}{2}-\frac{1}{p}} r^{(d+1 - \frac{m+1}{m}\alpha)(\frac{1}{p}-\frac{1}{p_m})} (r^2 R)^{\frac{d+1}{2}(\frac{1}{p_d} -\frac{1}{p})}. \]
	Since
	\[ |T|^{-1}  H(2T)  \sim \frac{1}{R^{\frac{d+1}{2}}} R^{-(d-\alpha)}\int_{2T}  \mu_{R^{-1}}  (R^{-1}x) dx = R^{-(d-\alpha)} R^\frac{d-1}{2}  \mu_{R^{-1}}(2T_{R^{-1}}),\]
	combining these estimates finishes the proof.
	
\end{proof}

\section{A multilinear weighted $L^2$ estimate}\label{sec:refined}
In this section, we derive a multilinear weighted $L^2$ estimate from refined decoupling and multilinear Kakeya estimates. 

We say that the caps $U_1,\ldots,U_m\subset \S^{d-1}$ are $\nu$-transverse, for some $0<\nu\leq 1$, if
\[ 
	|v_1\wedge v_2\wedge\cdots\wedge v_m|\geq \nu
\]
for every choice of $v_j\in U_j$. 

We recall the Bennett-Carbery-Tao multilinear Kakeya estimate \cite[Theorem 5.1]{BCT}, in the form stated in \cite[Theorem 4.4]{DGLZ}. 
\begin{theorem}[Multilinear Kakeya]\label{thm:multiKakeya} 	
	Let $2\leq m \leq d$ and let $U_1,\ldots,U_m\subset \S^{d-1}$ be $\nu$-transverse caps. 
	For each $j=1,\cdots,m$, suppose that $\{ l_{j,a} \}_{a=1}^{N_j}$ is a collection of lines in $\R^{d}$ whose directions lie in $U_j$. Let $T_{j,a}$ denote the 1-neighborhood of $l_{j,a}$. Then for any $\epsilon>0$ and any cube $Q_L$ of side length $L\geq 1$,
	\[ \int_{Q_L} \prod_{j=1}^m \bigg( \sum_{a=1}^{N_j} \chi_{T_{j,a}}\bigg)^{\frac{1}{m-1}} \leq   C_\epsilon L^{\epsilon}  \nu^{-O(1)}  \prod_{j=1}^m {N_j}^{\frac{1}{m-1}}.\]
\end{theorem}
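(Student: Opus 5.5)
This is the Bennett--Carbery--Tao theorem, and in the paper we simply quote it from \cite[Theorem 5.1]{BCT} as restated in \cite[Theorem 4.4]{DGLZ}. If a self-contained argument were wanted, the plan below would be the one to follow: Guth-style induction on scales with an $L^{\epsilon}$ loss. One thing should be said at the outset. For $m<d$ the estimate cannot simply be obtained by projecting onto an $m$-dimensional subspace. Integrating over the $(d-m)$-dimensional fibers of $Q_L$ costs a factor $L^{d-m}$, which the right-hand side does not allow. The argument therefore has to be run intrinsically in $\R^d$, with $m$ families of lines.

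\emph{Step 1 (normalization).} Split each cap $U_j$ into $O_\delta(1)$ subcaps of radius $\delta=\delta(\nu,\epsilon)$, chosen small compared with $\nu$. By the triangle inequality and H\"older, the left-hand side is bounded by $\delta^{-O(1)}$ times the maximum over choices of subcaps. Transversality survives with constant $\sim\nu$ when $\delta\ll\nu$. So we may assume each family consists of lines whose directions lie within $\delta$ of a fixed unit vector $e_j$, with $|e_1\wedge\cdots\wedge e_m|\gtrsim \nu$.

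\emph{Step 2 (base case: multilinear Loomis--Whitney).} On a cube of side $L_0\le \delta^{-1}$, the tubes of the $j$-th family are, up to a constant dilation, $1$-neighborhoods of lines parallel to $e_j$. The function $\sum_a\chi_{T_{j,a}}$ then essentially factors through the projection $\pi_j$ along $e_j$. The required inequality becomes a Brascamp--Lieb inequality for the data $(\pi_j,\tfrac{1}{m-1})_{j=1}^m$, applied on the cube $Q_{L_0}$. For $m<d$ the directions orthogonal to $\mathrm{span}\{e_j\}$ are integrated only over a length $L_0$ and are handled by the localization to tubes. The constant is $\nu^{-O(1)}$, because for linearly independent directions this reduces, after a linear change of variables of determinant $\sim\nu^{-1}$, to the classical Loomis--Whitney inequality in the $m$ coordinates. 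This gives the estimate with $L^{\epsilon}$ replaced by $O(\nu^{-O(1)})$ at scale $L_0$. The same bound holds at any scale $s$ for tubes of radius $s/L_0$: apply the scale-$L_0$ bound to the tube family rescaled by $L_0/s$, and the tube volumes cancel on both sides.

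\emph{Step 3 (induction on scales).} Let $K(L)$ denote the best constant. Partition $Q_L$ into cubes $Q'$ of side $L/L_0$, so that there are $L_0^{d}$ of them. On each $Q'$, apply the scale-$L/L_0$ estimate $K(L/L_0)$ to the tubes meeting $Q'$. Inside each $Q'$, replace the tubes by tubes of radius $L/L_0$. After rescaling, the multiplicities $\sum_a \chi_{T_{j,a}\cap Q'}$ satisfy the hypothesis of the base case at the coarser scale. Applying Step 2 to these fattened tubes gives $K(L)\le C\nu^{-O(1)}K(L/L_0)$. Iterating $\log L/\log L_0$ times yields $K(L)\le (C\nu^{-O(1)})^{\log L/\log L_0}$. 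Choosing $L_0$ large in terms of $\epsilon$ and $\nu$ makes this at most $C_\epsilon L^{\epsilon}\nu^{-O(1)}$.

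The main obstacle is Step 3, and in particular checking that the quantity $\prod_j N_j^{1/(m-1)}$ is reproduced correctly when passing between scales. Each tube meets many cubes $Q'$, and the local counts $N_j(Q')$ must be summed against the multiplicity function of the fattened tubes; this is the mechanism behind Guth's short proof. The first thing to try is to write the coarse-scale integrand as $\prod_j\big(\sum_a \chi_{\tilde T_{j,a}}\big)^{1/(m-1)}$, where the $\tilde T_{j,a}$ are the radius-$L/L_0$ fattenings. The local bound on $Q'$ is precisely $\prod_j N_j(Q')^{1/(m-1)}$, and $N_j(Q')$ is comparable to the average of $\sum_a \chi_{\tilde T_{j,a}}$ over $Q'$. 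Summing over $Q'$ then reduces Step 3 to the base estimate at the coarse scale.
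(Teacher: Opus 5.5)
The paper gives no proof of this statement. It quotes the estimate from Bennett--Carbery--Tao, in the $m$-linear form stated by Du--Guth--Li--Zhang. Your sketch takes a different, self-contained route: it is essentially Guth's short induction-on-scales proof. The skeleton is sound. With $\delta\le L_0^{-1}$ the tubes are essentially parallel on $L_0$-cubes. For the $\sim L/L_0$-fattenings, $N_j(Q')\le\sum_a\chi_{\tilde T_{j,a}}$ holds pointwise on $Q'$. The rescaled base case then gives $K(L)\le C\,K(L/L_0)$. Compared with the cited argument, this gives an elementary proof that uses only Loomis--Whitney, at the price of the $L^{\epsilon}$ loss, which the statement allows. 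Two points need repair.

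\emph{The $\nu$-dependence does not come out as claimed.} In your iteration the per-step constant is $C\nu^{-O(1)}$, so you must take $L_0\ge (C\nu^{-O(1)})^{1/\epsilon}$. Step 2 then forces $\delta\le L_0^{-1}$, and the Step 1 splitting costs $\delta^{-O(1)}\ge (C\nu^{-O(1)})^{O(1/\epsilon)}$. The final bound is therefore $C_\epsilon L^{\epsilon}\nu^{-O(1/\epsilon)}$, not $\nu^{-O(1)}$ with an $\epsilon$-independent exponent. The fix is to normalize before iterating:
\begin{itemize}
\item Split into subcaps of radius $c\nu$ and fix central directions $e_j$.
\item Apply a linear map $A$ sending $e_1,\dots,e_m$ to an orthonormal set. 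This costs $\nu^{-O(1)}$ once, from $|\det A|^{-1}$, from enlarging the image tubes to radius $\|A\|\lesssim\nu^{-O(1)}$, and from rescaling.
\item The base constant is now an absolute $C_0$, so $L_0=C_0^{1/\epsilon}$ suffices.
\item The further subdivision into caps of radius $\nu^{O(1)}L_0^{-1}$ then costs only $C_\epsilon\nu^{-O(1)}$.
\end{itemize}

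\emph{For $m<d$ the base case is not a Brascamp--Lieb inequality for $(\pi_j,\frac{1}{m-1})$.} Since $m\frac{d-1}{m-1}>d$, the scaling condition fails. Even on $Q_{L_0}$ the inequality is false for general $g_j$: with $g_j=\chi_{B_r}$ the ratio of the two sides is $\sim r^{-(d-m)/(m-1)}\to\infty$ as $r\to0$. It holds only because the $g_j$ are sums of unit bumps. Its independence of $L_0$ is essential, since any $L_0^{d-m}$ loss would compound to $L^{d-m}$ over the iteration. The clean argument is as follows.
\begin{itemize}
\item Make the tubes exactly parallel to $e_j\in V=\mathrm{span}\{e_1,\dots,e_m\}$.
\item Slice by the planes $V+z$, $z\in V^\perp$. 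On each slice, $m$-dimensional Loomis--Whitney gives a bound $\lesssim\nu^{-O(1)}\prod_j N_j(z)^{1/(m-1)}$, where $N_j(z)$ counts the tubes meeting that slice.
\item Apply H\"older in $z$:
$\int\prod_j N_j(z)^{1/(m-1)}\,dz\le\prod_j\big(\|N_j\|_\infty^{1/(m-1)}\|N_j\|_{L^1}\big)^{1/m}\lesssim\prod_j N_j^{1/(m-1)}$.
This uses that each tube occupies an $O(1)$-ball in $V^\perp$.
\end{itemize}
With these two repairs your argument proves the statement as written.
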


\Cref{thm:multiKakeya} has the following consequence.
\begin{prop} \label{cor:multiKakeya}
	Let $2\leq m \leq d$ and let $U_1,\ldots,U_m\subset \S^{d-1}$ be $\nu$-transverse caps. 	For each $j=1,\cdots,m$, let $\W_j \subset \T(R)$ be a collection of tubes whose directions lie in $U_j$.  Let $\mathcal{Q} = \{ Q \}$ be a collection of  disjoint $R^{1/2}$-cubes contained in $B_R$ such that each $Q$ is contained in $\sim M_j$ dilated tubes $\{ 2T\}_{T\in \W_j}$ for each $1\leq j\leq m$. Then 
	\[ \# \mathcal{Q} \less \nu^{-O(1)} \prod_{j=1}^m \left( \frac{\# \W_j}{M_j} \right)^{1/(m-1)}. \]
\end{prop}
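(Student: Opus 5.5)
We will deduce the bound from \Cref{thm:multiKakeya} by rescaling so that the tubes become unit-width neighborhoods of lines, then integrating the multilinear product over $\bigcup_{Q\in\mathcal{Q}} Q$.

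First, rescale by $R^{-1/2}$. Each tube $2T$, $T\in\W_j$, has dimensions $\sim R^{1/2+\eps_1}\times\cdots\times R^{1/2+\eps_1}\times R^{1+\eps_1}$, so after scaling it is contained in the $CR^{\eps_1}$-neighborhood of a line $l_T$ whose direction $v(T)$ lies in $U_j$. The $R^{1/2}$-cubes $Q$ become unit cubes $\tilde Q$ inside a cube $Q_L$ of side $L=R^{1/2}$. Covering the $CR^{\eps_1}$-neighborhood of $l_T$ by $O(R^{O(\eps_1)})$ parallel $1$-neighborhoods of lines (equivalently, rescaling once more by $R^{\eps_1}$) costs only a factor $R^{O(\eps_1)}$, which is acceptable in $\less$. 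So, with a slight abuse of notation, we may write $2T\subset \tilde T$, where $\tilde T$ is a union of $O(R^{O(\eps_1)})$ unit tubes $T_{j,a}$ with directions in $U_j$. This gives $N_j\les R^{O(\eps_1)}\#\W_j$ lines for each $j$.

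Next, we obtain a pointwise lower bound on each cube. If $Q\subset 2T$, then the rescaled cube $\tilde Q$ is contained in $\tilde T$, so $\sum_a \chi_{T_{j,a}}\ge R^{-O(\eps_1)}$ on a subset of $\tilde Q$ of measure $\ges R^{-O(\eps_1)}$. Rather than fuss over this, we can take the unit tubes to cover the enlarged neighborhood, so that $\sum_a\chi_{T_{j,a}}\ge \#\{T\in\W_j: Q\subset 2T\}\sim M_j$ on all of $\tilde Q$. Since the cubes $Q$ are disjoint, so are the $\tilde Q$, and hence
\[
\#\mathcal{Q}\prod_{j=1}^m M_j^{\frac{1}{m-1}} \les \sum_{Q}\int_{\tilde Q}\prod_{j=1}^m\Big(\sum_a\chi_{T_{j,a}}\Big)^{\frac1{m-1}} \le \int_{Q_L}\prod_{j=1}^m\Big(\sum_a\chi_{T_{j,a}}\Big)^{\frac1{m-1}}.
\]

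Finally, apply \Cref{thm:multiKakeya} with $L=R^{1/2}$. This bounds the right-hand side by $C_\eps R^{\eps}\nu^{-O(1)}\prod_j N_j^{1/(m-1)}\less \nu^{-O(1)}\prod_j(\#\W_j)^{1/(m-1)}$. Dividing by $\prod_j M_j^{1/(m-1)}$ gives the claim.

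The main obstacle is the bookkeeping of the $R^{\eps_1}$ dilation in $2T$. The unit-tube cover must contain the dilated tube, so that every cube $\tilde Q$ sees $\ges M_j$ tubes, while the number of unit tubes stays within $R^{O(\eps_1)}\#\W_j$. We handle this by choosing $\eps_1$ small relative to $\eps$, so that all such losses are absorbed into $\less$.
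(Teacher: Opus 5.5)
Your proposal is correct and is the standard deduction the paper has in mind. The paper states the proposition as a direct consequence of \Cref{thm:multiKakeya} without writing out a proof. Your argument fills it in: rescale by $R^{-1/2}$, cover each rescaled $2T$ by $R^{O(\eps_1)}$ parallel unit tubes with directions in $U_j$, bound the multilinear integrand below by $\prod_j M_j^{1/(m-1)}$ on each disjoint unit cube $\tilde Q$, and absorb the $R^{O(\eps_1)}$ losses into $\less$.
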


We derive a multilinear estimate by combining the refined decoupling estimate with Proposition \ref{cor:multiKakeya}, adapting the approach from \cite{DGLZ} and \cite{DemRefined}.
\begin{theorem}\label{thm:mulc_simplified}
		Let $2\leq m \leq d$ and let $U_1,\ldots,U_m\subset \S^{d-1}$ be $\nu$-transverse caps.	For each $j=1,\cdots,m$, let $\W_j \subset \T(R)$ be a collection of tubes whose directions lie in $U_j$. Let $f_j = \sum_{T\in \W_j} f_T$ be a sum of wave packets with $(ER^{-1/2},m)$ concentrated frequencies for some $E\geq 1$.  Let $\mathcal{Q}$ be a collection of disjoint $R^{1/2}$-cubes in $B_R$ and $Y \subset \cup_{Q\in \mathcal{Q}} Q$ be a union of unit cubes in $\R^d$. Then 
	\begin{align*}
		\Big(\sum_{Q \in \mathcal{Q}}
		&\prod_{j=1}^m \| f_j\|_{L^2(Y\cap Q)}^{2/m} \Big)^{1/2} \\ &\less \nu^{-O(1)}  E^{\frac{d-m}{m+1}} 
		 \Big( |Y| \max_{Q\in \mathcal{Q}} |Y\cap Q| ^{m-1}  
	R^{-\frac{m(d+1)}{2}} \Big)^{\frac{1}{m(m+1)}} \prod_{j=1}^m   \| f_j \|_{L^2}^{\frac{1}{m}}.
	\end{align*}
\end{theorem}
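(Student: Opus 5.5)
We prove the multilinear weighted estimate of \Cref{thm:mulc_simplified} by combining \Cref{thm:refined_decouplingDuGeneral} on each cube $Q$ with the counting bound of Proposition \ref{cor:multiKakeya}, following \cite{DGLZ, DemRefined}.

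\emph{Reduction to uniform pieces.} Write $\mathcal{S}$ for the left-hand side. As in the proof of \Cref{thm:LpWeightedDecouplingConcentratedRefined}, we first pigeonhole in each $j$ so that $\|f_T\|_{L^2}$ is comparable to a fixed constant $\lambda_j$ for all $T\in\W_j$. This costs $(\log R)^m$ factors; here we use that $\mathcal{S}$ is subadditive in each $f_j$. Indeed, $\mathcal{S}$ is the $\ell^2$-norm over $Q$ of $\prod_j a_{j,Q}^{1/m}$ with $a_{j,Q}=\|f_j\|_{L^2(Y\cap Q)}$, so a triangle inequality in each $f_j$ together with H\"older handles a sum of $O(\log R)$ pieces. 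Next, for each $Q$ and each $j$, let $M_j(Q)$ be the dyadic number of tubes $2T$, $T\in\W_j$, containing $Q$. Pigeonholing the $m$-tuple $(M_1(Q),\dots,M_m(Q))$ gives a subcollection $\mathcal{Q}'$ on which $M_j(Q)\sim M_j$ for all $j$, and the sum over $\mathcal{Q}'$ captures a $\gtrapprox$ portion of $\mathcal{S}^2$. On each $Q\in\mathcal{Q}'$ we may replace $f_j$ by $f_{j,Q}=\sum_{T\in\W_j:\,Q\subset 2T} f_T$, up to negligible tails (modulo the usual $R^{\eps_1}$ dilation conventions).

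\emph{Local estimate on each cube.} Fix $Q\in\mathcal{Q}'$. By H\"older and \Cref{thm:refined_decouplingDuGeneral}, applied to $f_{j,Q}$ on the single cube $Q$ (where the multiplicity is at most $M_j$),
\[
\|f_{j,Q}\|_{L^2(Y\cap Q)}\le |Y\cap Q|^{\frac{1}{m+1}}\|f_{j,Q}\|_{L^{p_m}(Q)}\less E^{\frac{d-m}{m+1}}|Y\cap Q|^{\frac{1}{m+1}}M_j^{\frac{1}{m+1}}\cdot M_j^{\frac{1}{p_m}}\lambda_j|T|^{\frac1{p_m}-\frac12}.
\]
Here $(\sum_T\|f_T\|_{p_m}^{p_m})^{1/p_m}\sim M_j^{1/p_m}\lambda_j|T|^{1/p_m-1/2}$ by \eqref{eqn:wavepacketNorm}. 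Since $\frac{1}{m+1}+\frac{1}{p_m}=\frac12$, the right-hand side equals $E^{\frac{d-m}{m+1}}|Y\cap Q|^{\frac1{m+1}}(M_j/|T|)^{1/2}\lambda_j$. Taking the product over $j$ with powers $2/m$ and summing gives
\[
\mathcal{S}^2\lessapprox E^{\frac{2(d-m)}{m+1}}\sum_{Q\in\mathcal{Q}'}|Y\cap Q|^{\frac{2}{m+1}}\prod_j\Big(\frac{M_j}{|T|}\Big)^{1/m}\lambda_j^{2/m}.
\]

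\emph{Counting step.} We bound the sum by H\"older:
\[
\sum_{Q\in\mathcal{Q}'}|Y\cap Q|^{\frac{2}{m+1}}\le |Y|^{\frac{2}{m(m+1)}}\max_Q|Y\cap Q|^{\frac{2(m-1)}{m(m+1)}}\,(\#\mathcal{Q}')^{\frac{m-1}{m}}.
\]
To check the exponents, split $|Y\cap Q|^{2/(m+1)}=|Y\cap Q|^{2/(m(m+1))}\cdot|Y\cap Q|^{2(m-1)/(m(m+1))}$. Apply H\"older with exponents $m$ and $m/(m-1)$ to the first factor against $1$, and bound the second factor by its maximum. Proposition \ref{cor:multiKakeya} then gives $(\#\mathcal{Q}')^{\frac{m-1}{m}}\less\nu^{-O(1)}\prod_j(\#\W_j/M_j)^{1/m}$. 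The factors $M_j$ cancel, leaving
\[
\prod_j\big(\#\W_j\lambda_j^2/|T|\big)^{1/m}=|T|^{-1}\prod_j\|f_j\|_2^{2/m},
\]
using $\|f_j\|_2^2\sim\#\W_j\lambda_j^2$ (orthogonality of wave packets). Finally $|T|^{-1}=R^{-\frac{d+1}{2}}=\big(R^{-\frac{m(d+1)}{2}}\big)^{\frac{2}{m(m+1)}\cdot\frac{m+1}{2}}$, and the exponent bookkeeping in this step is the main check.

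\emph{Main obstacle.} The power of $R$ coming from this step does not obviously match: we get $R^{-(d+1)/2}$ in $\mathcal{S}^2$, whereas the target asks for $R^{-\frac{(d+1)}{m+1}}$. The discrepancy is exactly absorbed if the H\"older step is rebalanced. So the main obstacle, and the step I would examine first, is the choice of interpolation between the $L^2$ and $L^{p_m}$ norms. Concretely, I would apply refined decoupling with the factor $|Y\cap Q|^{1/(m+1)}$ and use the trivial bound $|f_T|\lesssim\lambda_j|T|^{-1/2}$ in a complementary way, mirroring \eqref{eqn:L2weighted}, so that the final exponents come out as $\frac{1}{m(m+1)}$ on $|Y|\max|Y\cap Q|^{m-1}R^{-m(d+1)/2}$. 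The essential structure is unchanged: local refined decoupling, cancellation of the $M_j$ via multilinear Kakeya, and H\"older in $Q$.
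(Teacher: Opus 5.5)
\textbf{There is a genuine gap, and it is not the one you flag.}

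\emph{The $R$-power mismatch is an arithmetic slip.} Since $\frac{1}{p_m}-\frac12=-\frac{1}{m+1}$, your local bound is $E^{\frac{d-m}{m+1}}|Y\cap Q|^{\frac{1}{m+1}}M_j^{\frac12}\lambda_j|T|^{-\frac{1}{m+1}}$, not $(M_j/|T|)^{1/2}\lambda_j$. The product over $j$ then carries $|T|^{-\frac{2}{m+1}}=R^{-\frac{d+1}{m+1}}$, which is exactly what the target requires.

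\emph{What actually fails is the H\"older inequality in your counting step.} Take $\#\mathcal{Q}'=n$ and $|Y\cap Q|=y$ for every $Q$, with $Y\subset\bigcup_{Q\in\mathcal{Q}'}Q$. The left side is $n y^{\frac{2}{m+1}}$ and the right side is $n^{\frac{m^2+1}{m(m+1)}}y^{\frac{2}{m+1}}$. Since $\frac{m^2+1}{m^2+m}<1$ for $m\geq 2$, the inequality is false. The reason is that H\"older with exponents $m,\frac{m}{m-1}$ produces $(\sum_Q|Y\cap Q|^{\frac{2}{m+1}})^{\frac1m}$, and by concavity this is at least, not at most, $(\sum_Q|Y\cap Q|)^{\frac{2}{m(m+1)}}$. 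So even with Proposition \ref{cor:multiKakeya}, your argument gives the target only up to an extra factor $(\#\mathcal{Q}')^{\frac{m-1}{m(m+1)}}$ in $\mathcal{S}^2$. This factor can be a power of $R$; for instance, with full families of tubes in each cap and all $R^{1/2}$-cubes of $B_R$ used, it is $R^{\frac{m-1}{2(m+1)}}$ when $m=d$.

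\emph{The loss comes from localizing.} Applying \Cref{thm:refined_decouplingDuGeneral} to one cube at a time charges that cube the global norm $\|f_T\|_{L^{p_m}}$ of every tube through it. Each tube is therefore paid for once per cube it crosses, up to $\approx R^{1/2}$ times. Refined decoupling is a global statement: its strength is that $\sum_T\|f_T\|_{L^{p_m}}^{p_m}$ is counted once over the whole union of cubes. Rebalancing the $L^2$--$L^{p_m}$ interpolation cube by cube does not touch this loss.

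\emph{The fix, which is the paper's proof.} Mirroring \eqref{eqn:L2weighted} is the right instinct, but it must be done for each $f_j$ on the whole set $Y_1=\bigcup_{Q\in\mathcal{Q}_1}Y\cap Q$, after first separating the product by H\"older in $Q$:
\[
\sum_{Q\in\mathcal{Q}_1}\prod_j\|f_j\|_{L^2(Y\cap Q)}^{2/m}\le\prod_j\|f_j\|_{L^2(Y_1)}^{2/m}.
\]
Then H\"older on $Y_1$ and refined decoupling on all of $\bigcup_{Q\in\mathcal{Q}_1}Q$ (multiplicity $M_j$) give
\[
\|f_j\|_{L^2(Y_1)}\lessapprox E^{\frac{d-m}{m+1}}\Big(\frac{|Y_1|M_j}{|T|\#\W_j}\Big)^{\frac{1}{m+1}}\|f_j\|_{L^2}.
\]
Proposition \ref{cor:multiKakeya} converts $\prod_j M_j/\#\W_j$ into $\nu^{-O(1)}(\#\mathcal{Q}_1)^{-(m-1)}$. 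Finally, $|Y_1|^m(\#\mathcal{Q}_1)^{-(m-1)}=|Y_1|\,(|Y_1|/\#\mathcal{Q}_1)^{m-1}\le|Y|\max_Q|Y\cap Q|^{m-1}$. In this route the cube densities enter only through $|Y_1|$ and their average over $\mathcal{Q}_1$, never through $\sum_Q|Y\cap Q|^{\frac{2}{m+1}}$.
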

\begin{proof}
	Arguing as in the proof of  \Cref{thm:LpWeightedDecouplingConcentratedRefined},  we may assume that, for each $1\leq j\leq m$, $\| f_T \|_2$ is comparable to a fixed constant (which may depend on $j$) for every $T\in \W_j$. 
	
	By dyadic pigeonholing, we may find dyadic  $M_1,M_2,\cdots,M_m\geq 1$ and a subcollection $\mathcal{Q}_1 \subset \mathcal{Q}$ such that
	\[	\Big(\sum_{Q \in \mathcal{Q}}
	\prod_{j=1}^m \| f_j\|_{L^2(Y\cap Q)}^{2/m} \Big)^{1/2} \less \Big(\sum_{Q \in \mathcal{Q}_1}
	\prod_{j=1}^m\| f_j\|_{L^2(Y\cap Q)}^{2/m} \Big)^{1/2}  \]
	and  each $Q\in \mathcal{Q}_1$ is contained in $2T$ for $\sim M_j$ tubes $T\in \W_j$ for every $j=1,2,\cdots, m$. 
	
	Let $Y_1 = \cup_{Q\in \mathcal{Q}_1} Y\cap Q$. By H\"{o}lder and the proof of \eqref{eqn:L2weighted}, we have 
	\begin{align*}
		\Big(\sum_{Q \in \mathcal{Q}_1}
		\prod_{j=1}^m \| f_j\|_{L^2(Y\cap Q)}^{2/m} \Big)^{1/2}   &\leq	\prod_{j=1}^m \| f_j\|_{L^2(Y_1)}^{1/m}\\
		&\less E^{\frac{d-m}{m+1}} \prod_{j=1}^m  \Big(\frac{|Y_1|M_j}{|T|\# \W_j}\Big)^{\frac{1}{m(m+1)}}  \Big( \sum_{T\in \W_j} \| f_T \|_{L^2}^2 \Big)^{\frac{1}{2m}}.
	\end{align*}
	
	Note that $ \sum_{T\in \W_j} \| f_T \|_{L^2}^2  \sim \| f_j \|_{L^2}^2$.
	By Proposition \ref{cor:multiKakeya}, we have
	\[ 
	\prod_{j=1}^m  \frac{M_j}{\# \W_j}  \less \nu^{-O(1)}   (\# \mathcal{Q}_1)^{-(m-1)}. \]

	Combining these bounds, we get 
	\begin{align*}
		\Big(\sum_{Q \in \mathcal{Q}}
		&\prod_{j=1}^m \| f_j\|_{L^2(Y\cap Q)}^{2/m} \Big)^{1/2}
		\\ &\less \nu^{-O(1)}  E^{\frac{d-m}{m+1}} \Big( |Y_1|^m 
		(\# \mathcal{Q}_1)^{-(m-1)}  R^{-\frac{m(d+1)}{2}} \Big)^{\frac{1}{m(m+1)}} \prod_{j=1}^m  \| f_j \|_{L^2}^{\frac{1}{m}}.
	\end{align*}
	
	The claimed bound follows since 
	\begin{align*}
	|Y_1|^m 
	(\# \mathcal{Q}_1)^{-(m-1)}  
		=	 |Y_1|
		\Big(\frac{\sum_{Q\in \mathcal{Q}_1}  |Y\cap Q| }{\# \mathcal{Q}_1}\Big)^{m-1} \leq |Y|    \max_{Q\in \mathcal{Q}_1}  |Y\cap Q|^{m-1}. 
	\end{align*}
\end{proof}

\section{Proof of \Cref{thm:LpWeightedDecouplingConcentrated}}\label{sec:proofMain}
To facilitate the induction argument, we prove a slightly more general version of \Cref{thm:LpWeightedDecouplingConcentrated} with an additional parameter $E\geq 1$. 
\begin{theorem}\label{thm:LpWeightedDecouplingConcentratedGeneral}
	Let $2\leq m \leq d$. Let $f = \sum_{T \in \W} f_T$ be a sum of wave packets with $(ER^{-1/2}, m)$-concentrated frequencies for some $E\geq 1$, and let $Y\subset B_R$ be a union of unit cubes.
	Suppose  $0\leq \alpha, \beta \leq d$ satisfy $\beta \geq \alpha +m - d$. Then for each $0<\eps<1$, there exists an absolute constant $C_\eps$ which may depend on $\eps$ and $d$  for which 
	\begin{align*}
	\| f \|_{L^2(Y)} \leq C_\eps R^\eps E^{\frac{d-m}{m+1}} \mathcal{A}_{d,m,\alpha,\beta}(R, Y) ^{\frac{1}{m(m+1)}} 
	\| f \|_{L^2}.
	\end{align*}
\end{theorem}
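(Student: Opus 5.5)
We will prove the estimate by induction on the scale $R$, using a broad--narrow decomposition at an intermediate scale $K^{-1}$ with $K = R^{\delta}$, where $\delta$ is small relative to $\eps$. The base case $R\lesssim C$ is trivial, with $C_\eps$ large.

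\textbf{Setup.} Partition the caps $\theta$ into larger caps $\tau$ of radius $K^{-1}$, and write $f=\sum_\tau f_\tau$. Since $f$ has $(ER^{-1/2},m)$-concentrated frequencies, the normals $v(T)$ lie near an $m$-dimensional subspace $V$. Hence only caps $\tau$ meeting an $m$-dimensional great sphere are relevant, and there are $\sim K^{m-1}$ of them. For each $R^{1/2}$-cube $Q$ (or each $K^2$-cube), we call $Q$ \emph{broad} if there are $m$ caps $\tau_1,\dots,\tau_m$ that are $K^{-O(1)}$-transverse within $V$ and satisfy $\|f_{\tau_j}\|_{L^2(Y\cap Q)}\gtrsim K^{-O(m)}\|f\|_{L^2(Y\cap Q)}$. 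Otherwise $Q$ is \emph{narrow}: then $f$ on $Y\cap Q$ is dominated by the caps near some $(m-1)$-dimensional subspace, with the standard $K^{O(1)}$ loss.

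\textbf{Broad part.} By pigeonholing over the $K^{O(m)}$ choices of transverse tuples, we have
\[\|f\|_{L^2(Y_{\rm broad})}^2\lesssim K^{O(1)}\sum_Q \prod_j\|f_{\tau_j}\|_{L^2(Y\cap Q)}^{2/m}.\]
We then apply \Cref{thm:mulc_simplified} with $\nu=K^{-O(1)}$. This gives the bound
\[\big(|Y|\max_Q|Y\cap Q|^{m-1}R^{-m(d+1)/2}\big)^{1/(m(m+1))}\|f\|_2.\]
It remains to check $|Y|\le \overline{\gamma_{\beta,R}}(Y)R^\beta$, which holds by taking $\rho=R$. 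Since $\gamma_{\alpha,R^{1/2}}(Y)\ge |Y\cap B_1|\gtrsim 1$ when $Y$ is nonempty, this is dominated by $\mathcal A^{1/(m(m+1))}$, with losses $K^{O(1)}=R^{O(\delta)}\le R^{\eps/2}$.

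\textbf{Narrow part.} This is the main obstacle. On a narrow $Q$ we use decoupling for the $(m-1)$-dimensional configuration, or simply the $L^2$ orthogonality / Cauchy--Schwarz over $\lesssim K^{m-2}$ caps near a hyperplane of $V$, to get
\[\|f\|_{L^2(Y\cap Q)}^2\lessapprox K^{O(1)?}\sum_\tau \|f_\tau\|_{L^2(Y\cap Q)}^2.\]
Since an $O(1)$ power of $K$ here would destroy the induction, we instead rely on $L^2$ and locally constant properties, which lose only $K^{\eps'}$. Each $f_\tau$, after parabolic rescaling by the cap $\tau$, becomes a function at scale $R'=R/K^2$ with $(EK\cdot R'^{-1/2},m)$-concentrated frequencies. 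The rescaling is the affine map sending $\tau$-tubes of size $K^{-1}R\times\cdots$ to standard ones, and it maps $Y$ to a set $Y'$.

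We then apply the induction hypothesis at scale $R'$ with parameter $E'=EK$. This costs $K^{\frac{d-m}{m+1}}$ relative to $E$. We must verify that $\mathcal{A}(R',Y')$ is smaller than $\mathcal A(R,Y)$ by at least a factor of $K^{-c}$ with
\[c>\frac{m(m+1)(d-m)}{m+1}.\]
Here the density quantities transform as follows:
\begin{itemize}
\item balls of radius $\rho$ in the rescaled picture correspond to $K\rho\times\cdots\times K^2\rho$ slabs;
\item $\gamma_{\alpha,R'^{1/2}}(Y')$ is controlled by $\gamma_{\alpha,R^{1/2}}(Y)$ times a power of $K$, since a $K\rho\times K^2\rho$ box is covered by cubes of side $K\rho$;
\item $\overline{\gamma_{\beta,R'}}$ is controlled similarly using scales in $[R^{1/2},R]$;
\item the hypothesis $\beta\ge\alpha+m-d$ is exactly what makes the powers of $K$ from the $d-m$ "thin" directions (where $E$ grows) combine favourably against the gain from $R^{-m(d+1)/2}$.
\end{itemize}

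Rather than a direct rescaling, I would follow \cite{DZ,du2023weighted}: use \Cref{thm:b} at the intermediate scale $K^2$ to pass from $\|f_\tau\|_{L^2(Y)}$ to an estimate over tubes of scale $R/K$, and then sum over $\tau$ via orthogonality, $\sum_\tau\|f_\tau\|_2^2\sim\|f\|_2^2$. The exponent bookkeeping in this step is where I expect the difficulty. Once the powers of $K$ gained exceed $K^{O(\eps')}$, the induction closes with constant $C_\eps R^\eps$, provided we choose $C_\eps$ large and $\delta\ll\eps$.
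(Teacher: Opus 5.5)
Your broad case matches the paper's: you pigeonhole a transverse $m$-tuple, apply \Cref{thm:mulc_simplified} with $\nu=K^{-O(1)}$, and then use $|Y|\le\overline{\gamma_{\beta,R}}(Y)R^\beta$ and $\gamma_{\alpha,R^{1/2}}(Y)\ge1$. The narrow case, however, has a genuine gap.

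\textbf{Passing from $f$ to the pieces $f_\tau$.} On a narrow cube you have no way to do this without losing a power of $K$. Since $Y$ is an arbitrary union of unit cubes, the $f_\tau$ can interfere constructively on $Y\cap B$. So $\|f\|_{L^2(Y\cap B)}^2\lessapprox\sum_\tau\|f_\tau\|_{L^2(Y\cap B)}^2$ is false in general, and Cauchy--Schwarz costs up to $\#S(B)\lesssim K^{m-2}$. The $K^{-2\eps}$ gain from induction cannot absorb this when $m\ge3$. The missing idea is to work on $K^2$-cubes and apply \Cref{thm:b} with $(m-1,K^2)$ in place of $(m,R)$. On a narrow $K^2$-cube, the $K\times\cdots\times K\times K^2$ wave packets $f_{T_1}$ of $\sum_{\tau\in S(B)}f_\tau$ have $(O(K^{-1}),m-1)$-concentrated frequencies, i.e.\ $E=O(1)$ at scale $K^2$. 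After pigeonholing $|2T_1\cap Y|\sim\eta|T_1|$, one gets
\[\|f\|_{L^2(Y)}^2\lessapprox\eta^{2/m}\sum_{B\in\Bn}\sum_{T_1\in\T(K^2;B)}\|f_{T_1}\|_{L^2}^2\]
with only $K^{O(\eps_1)}$ losses. The price is that $\|f_{T_1}\|_{L^2}$ is controlled by $\|f_\tau\|_{L^2(2T_1)}$. Hence $Y$ must be replaced by the fattened set $Y_\sq=\bigcup_{T_1\in\T_\sq}2T_1$ in each $K^{-1}R\times\cdots\times K^{-1}R\times R$ box $\sq$ before rescaling. Its rescaled density obeys $\sup_{B_\rho}|L_\tau(Y_\sq)\cap B_\rho|\lessapprox\eta^{-1}K^{-d}\sup_{B_{K\rho}}|Y\cap B_{K\rho}|$, whence
\[\mathcal{A}_{d,m,\alpha,\beta}(R_1,L_\tau(Y_\sq))\lessapprox\eta^{-(m+1)}K^{\alpha-\beta+m-d}\,\mathcal{A}_{d,m,\alpha,\beta}(R,Y),\qquad R_1=R/K^2.\]
The factor $\eta^{-(m+1)}$, raised to the power $\frac{2}{m(m+1)}$, exactly cancels the gain $\eta^{2/m}$. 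This $\eta$-cancellation is the heart of the narrow case, and it does not appear in your sketch. Your last paragraph gestures at \Cref{thm:b}, but applies it to individual $f_\tau$ rather than to $f$ on a narrow $K^2$-cube with $m-1$ in place of $m$.

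\textbf{The growth of $E$.} Your exponent bookkeeping rests on an incorrect premise. The rescaled function is not $(EKR_1^{-1/2},m)$-concentrated. Since $\|L_\tau\|\lesssim K^{-1}$ and $\|L_\tau^{-1}\|\lesssim K^2$, the map $L_\tau$ distorts angles by at most a factor $O(K)$. So the angle $ER^{-1/2}$ becomes $O(KER^{-1/2})=O(E)R_1^{-1/2}$, and $E$ changes only by a constant factor, which $R_1^\eps=K^{-2\eps}R^\eps$ absorbs. This matters. If $E$ really grew to $EK$, you would need $\mathcal{A}$ to drop by $K^{-m(d-m)}$. The computation above shows that the only available power of $K$ is $K^{\alpha-\beta+m-d}$, which equals $1$ when $\beta=\alpha+m-d$, so for $m<d$ your induction would not close. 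The hypothesis $\beta\ge\alpha+m-d$ serves only to guarantee $K^{\alpha-\beta+m-d}\le1$; it does not offset any growth of $E$.
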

\Cref{thm:LpWeightedDecouplingConcentrated} is then a special case of \Cref{thm:LpWeightedDecouplingConcentratedGeneral} for $E=1$. 

The estimate is trivial if $Y$ is empty. Thus, we assume that $Y$ contains at least one unit cube. Accordingly, we have 
\begin{equation}\label{eqn:lowerBoundQuantities}
	 \gamma_{\alpha,R^{1/2}}(Y)  \geq 1, \;\;  1\leq |Y| \leq \overline{\gamma_{\beta,R}}(Y) R^{\beta}, \;\; \sup_{B_{R^{1/2}}} |Y\cap B_{R^{1/2}}| \geq 1.
	\end{equation}
Moreover, we may assume that $2T\cap Y$ is non-empty for each $T\in \W$ since the contribution of $f_T$ outside $2T$ is negligible. 

For a given $0<\eps<1$, we choose $0<\eps_1 \ll \eps$ sufficiently small relative to $\eps$.
For $K=R^{\eps_1}$, consider a canonical covering of $N_{K^{-2}} (S)$ by parallelepipeds $\tau$ of dimensions $K^{-1}\times \cdots \times K^{-1}\times K^{-2}$.
For each $T\in \W$, there exist $\sim 1$ rectangles $\tau$ containing $2\theta(T)$, where $\theta(T)$ denotes the $\theta$ for which $T\in \T_\theta$. We fix one such $\tau=\tau(T)$ for each $T\in \W$ and let $\W_\tau= \{ T\in \W: \tau(T) = \tau\}$, so that 
\[ f=\sum_\tau f_\tau, \;\; \text{where} \;\; f_\tau = \sum_{T \in \W_\tau} f_T. \]

Let $N$ denote the number of caps $\tau$; thus $N \sim K^{d-1}$.  For each lattice $K^2$-cube $B$ intersecting $Y$, define 
\[ S(B) = \{ \tau :  \| f_\tau \|_{L^2(B\cap Y)} \geq \frac{1}{ 2 N } \| f\|_{L^2(B\cap Y)} \}.  \]
The definition of $S(B)$ ensures that for each $K^2$-cube $B$,
\[ \| f\|_{L^2(B\cap Y)} \sim \| \sum_{\tau\in S(B)} f_\tau \|_{L^2(B\cap Y)}. \]

We say that a $K^2$-cube $B$  intersecting $Y$ is narrow if there exists a $(m-1)$-dimensional subspace $V\subset \R^d$ such that for every $\tau\in S(B)$, 
\[ \angle(G(\tau),V) \leq K^{-1},\]
where $G: S \to \S^{d-1}$ is a choice of unit normal map and $\angle(G(\tau),V)$ denotes the smallest angle between any non-zero vectors $u\in G(\tau)$ and $v\in V$. Otherwise, we say that $B$ is broad. If $B$ is broad, then there exist $\tau_1,\tau_2,\cdots,\tau_m \in S(B)$ such that
\begin{equation}\label{eqn:transverse}
	|v_1\wedge v_2\wedge\cdots\wedge v_m| \ges K^{-(m-1)},
\end{equation}
for any $v_i \in G(\tau_i)$.

Let $\Bb$ denote the collection of broad $K^2$-cubes and $\Bn$ denote the collection of narrow $K^2$-cubes. Then we have 
\[ Y= \Yb \bigsqcup \Yn, \]
where $\Yb$ is the union of $B\cap Y$ over $B\in \Bb$ and $\Yn$ is defined similarly. We say that we are in the broad case if $\| f\|_{L^2(\Yb)} \geq \| f\|_{L^2(\Yn)}$. Otherwise, we  say that we are in the narrow case. We handle each case in the following subsections.

\subsection{Broad case}
In this subsection, 
we write $A \less B$ for estimates of the form $A \leq K^{O(1)}B$; such a loss is harmless as long as $K^{O(1)} \leq R^{\eps/2}$.

In the broad case,  we have $\| f\|_{L^2(Y)} \les \| f\|_{L^2(\Yb)}$.  
For each $B\in \Bb$, there exist $\tau_1,\tau_2,\cdots,\tau_m \in S(B)$ satisfying \eqref{eqn:transverse} for any $v_i \in G(\tau_i)$. We fix such an $m$-tuple and denote $\bar{\tau}(B) = (\tau_1,\cdots,\tau_m)$. Let 
\[ \mathfrak T = \{ \bar{\tau}(B) : B \in \Bb \}. \]
Since $\# \mathfrak T \les K^{O(1)}$, by dyadic pigeonholing, there exist $\bar{\tau}=(\tau_1,\cdots,\tau_m) \in \mathfrak T$ and a sub-collection  $\Bb^1 \subset \Bb$ such that $\bar{\tau}(B) = \bar{\tau}$ for all $B\in \Bb^1$ and 
\[   \| f\|_{L^2(\Yb)}\less  \| f\|_{L^2(\Yb^1)}, \text{ where } \; \Yb^1 = \bigcup_{B\in \Bb^1} B\cap Y. \]

Let $B\in \Bb^1$. Since $\tau_j \in S(B)$ for each $j$ and $N\sim K^{d-1} \less 1$, we have 
\[ 
	\| f \|_{L^2(Y\cap B)}^2 \less  \prod_{j=1}^m \| f_{\tau_j} \|_{L^2(Y\cap B)}^{2/m}.
\]
Let $Q\subset B_R$ be a lattice $R^{1/2}$-cube. Summing the estimate over $B\in\Bb^1$ contained in $Q$ using H\"{o}lder, we obtain 
\begin{align*}
	\| f \|_{L^2(\Yb^1 \cap Q)}^2 \less \prod_{j=1}^m \| f_{\tau_j} \|_{L^2(\Yb^1\cap Q)}^{2/m}. 
\end{align*} 

Summing this over $Q\subset B_R$ by using the multilinear estimate, \Cref{thm:mulc_simplified}, we get
\begin{align*}
	\| f \|_{L^2(Y)} \less E^{\frac{d-m}{m+1}}
	 \Big( |Y|  \max_{B_{R^{1/2}}} |Y\cap B_{R^{1/2}}| ^{m-1}  
	R^{-\frac{m(d+1)}{2}} \Big)^{\frac{1}{m(m+1)}}  \prod_{j=1}^m   \| f_{\tau_j} \|_{L^2}^{\frac{1}{m}}.
\end{align*} 
Using \eqref{eqn:lowerBoundQuantities} and $ \| f_{\tau_j} \|_{L^2} \les \| f\|_{L^2}$, the desired bound follows in the broad case.

\subsection{Narrow case}
In the narrow case, we proceed by induction on the scale $R$. Unlike in the broad case, this requires tracking powers of $K$ throughout the argument. In the final step, we use the induction hypothesis at the scale $R/K^2$, which results in the gain of a factor of $K^{-2\eps}$. Therefore, the accumulated factor $K^{O(\eps_1)}$ is harmless  provided that $\eps_1$ is chosen sufficiently small so that $O(\eps_1)\leq \eps$, which will be assumed from now on. Accordingly, throughout this section, we write $A \less B$ to mean $A \leq c_{\eps_1} K^{O(\eps_1)} B$ for some absolute constant $c_{\eps_1}>0$.

For the induction-on-scale argument, we may assume that  $R \gg_\eps 1$, so that $c_{\eps_1} \leq K^\eps$, and the desired estimate holds for all scales up to $R/2$; for the base case $R \ll_\eps 1$, the trivial bound $\| f\|_{L^2(Y)}\leq \| f\|_{L^2}$  and \eqref{eqn:lowerBoundQuantities} yield the claimed estimate. 

\subsubsection{Decoupling over narrow $K^2$-cubes}

We recall that \[ \| f\|_{L^2(B\cap Y)} \sim \| \sum_{\tau\in S(B)} f_\tau \|_{L^2(B\cap Y)}\] for each $B\in \Bn$. For each $\tau\in S(B)$, consider a wave packet decomposition
\[ f_\tau = \sum_{T_1 \in \T_{\tau}} f_{T_1},\]
where $\T_{\tau} \subset \T(K^2)$ and each $T_1\in \T_{\tau}$ has dimensions $K\times \cdots \times K \times K^2$ with the long side orthogonal to $\tau$. We recall that $2T_1$ denotes the concentric  $K^{2\eps_1}$-dilate of $T_1$. Since we are concerned with $f_\tau$ on $Y$, the contribution of $T_1\in \T_\tau$ such that $2T_1$ does not contain any unit cube from $Y$ is negligible. Hence, after trimming wave packets if necessary, we may assume that $2T_1$ contains at least one unit cube from $Y$, so  $K^{-(d+1)} \leq |2T_1\cap Y| / |T_1| \less 1$ for any $T_1\in \T_\tau$. 

By dyadic pigeonholing, there exists a dyadic $\eta$ with $K^{-(d+1)} \leq \eta \less 1$ and a sub-collection $\T_{\tau,\eta} \subset \T_\tau$ such that $|2T_1 \cap Y| \sim \eta |T_1|$ for every $T_1\in \T_{\tau,\eta}$ and 
\begin{align*}
	\|f\|_{L^2(Y)}^2 &\sim  \sum_{B\in \Bn} \| \sum_{\tau\in S(B)} f_\tau \|_{L^2(B\cap Y)}^2 \\
	&\les \log K \sum_{B\in \Bn}\| \sum_{\tau\in S(B)} \sum_{T_1 \in \T_{\tau,\eta}} f_{T_1} \|_{L^2(B\cap Y)}^2.
\end{align*}

Let $\T(K^2;B)$ denote the collection of $T_1\in \cup_{\tau\in S(B)} \T_{\tau,\eta}$ such that $2T_1$ intersects $B$. 
By the localization of $f_{T_1}$, the contribution of $T_1\notin \T(K^2;B)$ is negligible on $B$. Note that for each  $B\in \Bn$, $\sum_{T_1 \in \T(K^2;B)} f_{T_1}$ has $(O(K^{-1}),m-1)$-concentrated frequencies by the definition of narrow $K^2$-cubes and the fact that the angle between any two vectors in $G(\tau)$ is $O(K^{-1})$. By \Cref{thm:b}, with $m$ and $R$ replaced by $m-1$ and $K^2$, respectively, we have 
\[
\| \sum_{\tau\in S(B)} \sum_{T_1 \in \T_{\tau,\eta}} f_{T_1}\|_{L^2(B\cap Y)} \less \eta^{\frac{1}{m}}  \Big(    \sum_{T_1 \in \T(K^2;B)}  \|  f_{T_1} \|_{L^2}^2 \Big)^{\frac{1}{2}},
\]
which yields 
\begin{equation}\label{eqn:mainb}
	\| f\|_{L^2(Y)}^2 \less \eta^{\frac{2}{m}} \sum_{B\in \Bn}\sum_{T_1 \in \T(K^2;B)} \|  f_{T_1} \|_{L^2}^2. 
\end{equation}

\subsubsection{Organizing small tubes $T_1$}
For each $\tau$, we cover $B_R$ by finitely overlapping parallelepipeds $\sq$ of dimensions $K^{-1}R \times \cdots \times K^{-1}R  \times R$ with the long side perpendicular to $\tau$. Each $\sq$ is the $R/K^{2}$ dilation of some $T_1 \in \T_\tau$. We denote the collection of $\sq$ by $\bB_\tau$ and let $\bB = \cup_\tau \bB_\tau$.  In addition, given $\sq \in  \bB$, we denote by $\tau(\sq)$ the $\tau$ for which $\sq\in \bB_\tau$. 

For each $T_1 \in  \cup_{B\in \Bn} \T(K^2;B)$, we fix $\sq\in \bB$ such that $T_1 \in \T_{\tau(\sq)}$ and  $2T_1\subset \sq$, and denote it by $\sq(T_1)$.
For each $\sq\in \bB$, let $\T_\sq$ denote the collection of all $T_1 \in \cup_{B\in \Bn} \T(K^2;B)$ such that $\sq(T_1) = \sq$. Then 
\begin{align*}
	\sum_{B\in \Bn} \sum_{T_1 \in \T(K^2;B)} \|  f_{T_1} \|_{L^2}^2 
	\less \sum_{\sq} \sum_{T_1\in \T_\sq} \| f_{T_1} \|_{L^2}^2, 
\end{align*}
since $T_1 \in \T(K^2;B)$ for $\less 1$ narrow $K^2$-cubes $B\in \Bn$. 
Given the wave packet decomposition $f_\tau = \sum_{T_1\in \T_\tau} f_{T_1}$, we have
\[  \| f_{T_1} \|_{L^2}^2 \les_{\eps_1} \| f_{\tau} \|_{L^2(2T_1)}^2 + K^{-\frac{200d}{\eps_1}} \| f_{\tau} \|_{L^2}^2. \]
We let 
\[ Y_\sq := \cup_{T_1\in \T_\sq} 2T_1,\] 
so that
\begin{align*}
	\sum_{\sq} \sum_{T_1\in \T_\sq} \| f_{T_1} \|_{L^2}^2 &\less   \sum_{\sq} \sum_{T_1\in \T_\sq} \Big(\| f_{\tau(\sq)} \|_{L^2(2T_1)}^2 + K^{-\frac{200d}{\eps_1}} \| f_{\tau(\sq)} \|_{L^2}^2 \Big) \\
	&\less  \sum_{\sq}  \| f_{\tau(\sq)} \|_{L^2(Y_\sq)}^2 +R^{-100d} \| f \|_{L^2}^2,
\end{align*}
where we have used the fact that the dilated tubes $\{ 2T_1\}$ have $\less 1$ overlap. The term $R^{-100d} \| f \|_{L^2}$ is negligible and will therefore be omitted in what follows.

Let $\W_\sq$ denote the collection of $T\in \W_{\tau(\sq)}$ such that $2T$ intersects $\sq$ and define $f_\sq = \sum_{T\in \W_\sq} f_T$. Each $T$ can be contained in $\W_\sq$ for $O(1)$ boxes $\sq$. Then, on $Y_\sq$, we may replace $f_{\tau(\sq)} = \sum_{T\in \W_{\tau(\sq)}} f_T$ by $f_\sq$ up to a negligible error term. By combining \eqref{eqn:mainb} with preceding estimates, we obtain 
\begin{equation}\label{eqn:decK}
	\| f\|_{L^2(Y)}^2 \less \eta^{\frac{2}{m}}\sum_{\sq} \| f_{\sq} \|_{L^2(Y_\sq)}^2.
\end{equation}

We also note the following which will be used later.
\begin{equation}\label{eqn:L2ortho}
	\sum_{\sq} \|  f_\sq \|_{L^2}^2 \sim \sum_{\sq} \sum_{T\in \W_\sq} \|  f_T \|_{L^2}^2 \les \sum_{T\in \W} \| f_T \|_{L^2}^2 \sim \| f\|_{L^2}^2.
\end{equation}

\subsubsection{Parabolic scaling}
We fix $\sq \in \bB_\tau$. We handle $\|  f_{\sq} \|_{L^2(Y_{\sq})}^2$ by parabolic scaling followed by induction on scale. We briefly recall the standard parabolic scaling argument (see e.g. \cite{Guth1}) for completeness. 

Without loss of generality, we may assume that the surface $S\subset \R^d$ is the graph of a smooth function $h$ over $[-1,1]^{d-1}$ such that $h(0)=\partial h(0) = 0$ and $
\frac{1}{2} I \leq \partial^2 h \leq 2I$ on $[-1,1]^{d-1}$ as quadratic forms. 

We begin by giving a formal description of the covering of $S$ by caps $\tau$ of dimensions  $K^{-1}\times \cdots K^{-1} \times K^{-2}$. Partition $[-1,1]^{d-1}$ by cubes $\tau'$ of the form $c_{\tau'} + [-K^{-1},K^{-1}]^{d-1}$. Then $\tau$ can be identified with the image of $[-2,2]^d$ under the affine transformation $A_\tau  : \R^d \to \R^d$ defined  by
\[  A_{\tau} (\zeta, t)  = (c_{\tau'}+ K^{-1} \zeta, \, h(c_{\tau'})+ K^{-1} \partial h(c_{\tau'}) \cdot \zeta + K^{-2} t ),  \;\; (\zeta,t)\in \R^{d-1}\times \R.\] 
Note that 
\[ A_{\tau}^{-1} (\omega, s) = \Big( K(\omega-c_{\tau'}) , K^2 \big( s-h(c_{\tau'})-\partial h(c_{\tau'})\cdot(\omega - c_{\tau'}) \big) \Big). \]

Let $L_{\tau}$ denote the transpose of the linear part of $A_\tau$: 
\[ L_{\tau} (x',x_d):= (K^{-1} x'+K^{-1}x_d \partial h(c_\tau'), K^{-2}x_d ), \; (x',x_d) \in \R^{d-1}\times \R. \]
Let $R_1$ denote the new scale 
\[ R_1= R/K^2. \] 
We have defined $\sq$ and $T_1 \in \T_\sq$ so that they are translates of  $L_\tau^{-1}(B_{R_1}(0))$  and $L_\tau^{-1}(B_{1}(0))$, respectively.

For $\zeta \in \tau'$, define  
\[ \tilde{h}(\zeta) =  h(\zeta) - h(c_{\tau'}) - \partial h(c_{\tau'})\cdot (\zeta-c_{\tau'}). \]
Then 
\[ A_\tau^{-1}(\zeta, h(\zeta)) = (K(\zeta- c_{\tau'}), K^2\tilde{h}(\zeta) ). \]
Hence, the affine map $A_{\tau}^{-1}$ transforms $\{ (\zeta, h(\zeta)): \zeta \in \tau' \} \subset S$ to the surface $S_1\subset \R^d$ defined as the graph of the function $h_1$, where 
\[ h_1(\omega) =K^2  \tilde{h}(c_{\tau'} + K^{-1} \omega), \;\; \omega  \in [-1,1]^{d-1}. \]
The new surface $S_1$ has properties similar to $S$. Indeed, note that $h_1(0)=\partial h_1(0) = 0$, $\partial_{i} \partial_{j} h_1(\omega)=\partial_{i} \partial_{j}  h(c_{\tau'} + K^{-1} \omega )$, thus $\frac{1}{2} I\leq \partial^2 h_1 \leq 2I$ on $[-1,1]^{d-1}$ as quadratic forms. 

Each $\theta \subset \tau$ is transformed to $\theta_\tau := A_\tau^{-1} \theta = A_\tau^{-1} A_\theta ([-2,2]^d)$.  Let $c_{\theta_\tau'} = K(c_{\theta'}-c_{\tau'})$.
The composition $A_\tau^{-1} A_\theta$ is the affine transform 
\[  (A_\tau^{-1} A_\theta) (\zeta,t) := (c_{\theta_\tau'} + R_1^{-1/2} \zeta, \, h_1(c_{\theta_\tau'} ) + R_1^{-1/2} \partial h_1(c_{\theta_\tau'}) \cdot \zeta + R_1^{-1}t ), \]
which is associated with the canonical cap $\theta_\tau$ of dimensions $R_1^{-1/2}\times \cdots \times R_1^{-1/2} \times R_1^{-1}$ covering $N_{R_1^{-1}}(S_1)$.

Let $\hat{g_T} = \hat{f_T }\circ A_\tau$. Then $g_T$ is a wave packet associated with a tube in $\T(R_1)$.
Indeed, under the parabolic scaling, a wave packet $f_T$ localized to $T\in \W_\sq$ in physical space and $\theta \subset \tau$ in frequency space is transformed into the wave packet $g_T$ localized to $L_\tau(T)$ and $\theta_\tau$, respectively. Moreover, 
\[ g_\sq := \sum_{T\in \W_\sq} g_T\]
has $(O(E) R_1^{-1/2}, m)$-concentrated frequencies. Indeed, if $V$ is an $m$-dimensional subspace for which $\angle(v(T),V) \leq ER^{-1/2}$ for $T\in \W$, then $\angle(v(L_\tau(T)), L_\tau(V) ) \les KER^{-1/2}$ for $T\in \W_\sq$, which is a consequence of the following.  
\begin{lemma}
	Let $u, \, v \in \S^{d-1}$ be unit vectors and $L=L_\tau$. Then 
	\[ \angle(L  u, L  v) \les K \angle(u,v). \]
\end{lemma}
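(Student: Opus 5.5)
The plan is to prove the lemma by bounding the operator norm of $L=L_\tau$ and of its inverse, and then using the elementary fact that normalizing a vector is Lipschitz away from the origin. The point is that $L$ has condition number $O(K)$, and angles between images can be distorted by at most the condition number.

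First I write $L = K^{-1} M$, where
\[ M(x',x_d) = (x' + x_d\, b,\; K^{-1} x_d), \qquad b=\partial h(c_{\tau'}). \]
Since $|b|\les 1$ by the normalization of $h$, we get $\|M\|\les 1$. The inverse is explicit:
\[ M^{-1}(y',y_d) = (y' - K y_d\, b,\; K y_d), \]
so $\|M^{-1}\|\les K$. Consequently, for every unit vector $u$,
\[ |Lu| = K^{-1}|Mu| \geq K^{-1}\|M^{-1}\|^{-1} \ges K^{-2}, \qquad \|L\|\les K^{-1}. \]

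Next, for nonzero vectors $a,b$ I use the standard inequality
\[ \Big|\frac{a}{|a|}-\frac{b}{|b|}\Big| \leq \frac{2|a-b|}{|a|}. \]
Applying this with $a=Lu$ and $b=Lv$ gives
\[ \Big|\frac{Lu}{|Lu|}-\frac{Lv}{|Lv|}\Big| \les \frac{\|L\|\,|u-v|}{K^{-2}} \les K\,|u-v|. \]

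It remains to pass between chord lengths and angles. For unit vectors with angle at most $\pi/2$, the angle is comparable to the chord length. If the angle in the paper is the unsigned one between lines, I first replace $v$ by $-v$ as needed, which is harmless since $L$ is linear. If the resulting bound $K|u-v|$ exceeds a small constant, the claim holds trivially because all angles are bounded by $\pi$. Otherwise the image chord is small, and the chord is comparable to the angle on both sides, which yields $\angle(Lu,Lv)\les K\angle(u,v)$.

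I expect no real obstacle here. The only step needing care is the lower bound $|Lu|\ges K^{-2}$, i.e. that the shear term $x_d b$ does not shrink vectors further. The explicit inverse $M^{-1}$ handles this, and it relies on $|\partial h(c_{\tau'})|\les 1$.
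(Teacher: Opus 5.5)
Your proposal is correct and is essentially the paper's proof: you bound $\|L\|\lesssim K^{-1}$ and $\|L^{-1}\|\lesssim K^{2}$, use that normalization is Lipschitz with constant $\lesssim 1/|Lu|$, and compare chord lengths with angles. The factorization $L=K^{-1}M$ with an explicit $M^{-1}$ only makes the norm bounds explicit, and your final case split is harmless but unnecessary, since $2\phi/\pi\le 2\sin(\phi/2)\le\phi$ for all $\phi\in[0,\pi]$.
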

\begin{proof}
	We note that \[ \| L  u\| \geq \| L^{-1} \|^{-1}, \;\; \| L v\| \geq \| L^{-1} \|^{-1}. \] 
	Using the bound $\angle(x,y)  \sim \| \frac{x}{\|x\|} - \frac{y}{\|y\|} \| \les \frac{\| x-y\|}{\min(\|x\|, \|y\|)}$, we have
	\[ \angle(L  u, L  v) \les  \frac{\|L  u-Lv\|}{\min(\|L  u\|, \| Lv\|)}   \les \|L^{-1}\| \| L \| \| u - v\| \sim   \|L^{-1}\| \| L \| \angle( u, v).  \]
	The claim follows from the bounds $\| L  \| \les K^{-1}$ and $\| L^{-1} \| \les K^2$.
\end{proof}

\subsubsection{Induction on scale}\label{sec:induct}
Fix $\sq \in \bB_\tau$. 
Recall that $g_\sq$ is a sum of wave packets associated with tubes in $\T(R_1)$ and it has $(O(E)R_1^{-1/2},m)$-concentrated frequencies for $R_1 = R/K^2$. Moreover,  $L_\tau(Y_\sq)$ is a union of unit cubes contained in the cube $L_\tau(\sq)$ of side length $R_1$. By the induction hypothesis, we have
\begin{equation}\label{eqn:induction}
	 \frac{ \| f_\sq\|_{L^2(Y_\sq)}}{\| f_{\sq} \|_{L^2} } =\frac{ \| g_\sq\|_{L^2(L_{\tau}(Y_\sq))}}{\| g_{\sq} \|_{L^2} } \leq C_\eps R_1^\eps O(E)^\frac{d-m}{m+1} \mathcal{A}_{d,m,\alpha,\beta}(R_1,L_\tau(Y_\sq))^{\frac{1}{m(m+1)}}.  
\end{equation}

To bound the density parameter $\mathcal{A}_{d,m,\alpha,\beta}(R_1,L_\tau(Y_\sq))$, we use the following result.
\begin{lemma}
Let $1\leq \rho \leq R_1$. Then 
\[ 	\sup_{B_\rho} |L_\tau(Y_\sq) \cap B_\rho| \less \eta^{-1}K^{-d} \sup_{B_{K\rho}} |Y\cap B_{K\rho}|. \]
\end{lemma}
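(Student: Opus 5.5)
The plan is to undo the parabolic rescaling and count volumes in the original coordinates. Fix a $\rho$-cube $B_\rho$ with $1\leq \rho\leq R_1$. Since $L_\tau$ is linear with $|\det L_\tau| = K^{-(d-1)}K^{-2}=K^{-(d+1)}$, we have
\[ |L_\tau(Y_\sq)\cap B_\rho| = K^{-(d+1)}\,|Y_\sq\cap P|, \qquad P:=L_\tau^{-1}(B_\rho). \]
Here $P$ is a parallelepiped of dimensions $K\rho\times\cdots\times K\rho\times K^2\rho$, with its long side in the same direction as the tubes $T_1\in\T_\tau$. Indeed, $T_1$ is a translate of $L_\tau^{-1}(B_1(0))$, so $P$ is essentially the $\rho$-dilate of such a tube. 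It therefore suffices to show
\[ |Y_\sq\cap P|\less \eta^{-1} K\,\sup_{B_{K\rho}}|Y\cap B_{K\rho}|. \]

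First, recall that $Y_\sq=\bigcup_{T_1\in\T_\sq}2T_1$, and every $T_1\in\T_\sq\subset\T_{\tau,\eta}$ satisfies $|2T_1\cap Y|\sim\eta|T_1|$. Bound
\[ |Y_\sq\cap P|\leq \sum_{T_1\in\T_\sq:\,2T_1\cap P\neq\emptyset}|2T_1| \less \#\{T_1\in\T_\sq: 2T_1\cap P\ne\emptyset\}\cdot|T_1|. \]
Next, trade each $|T_1|$ for $\eta^{-1}|2T_1\cap Y|$. This gives
\[ |Y_\sq\cap P|\less \eta^{-1}\sum_{T_1:\,2T_1\cap P\ne\emptyset}|2T_1\cap Y| \less \eta^{-1}|Y\cap P'|. \]
In the last step I use that the dilated tubes $\{2T_1\}$, $T_1\in\T_\tau$, have $\less 1$ overlap. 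I also use that any $2T_1$ meeting $P$ lies in $P'$, the $K^{O(\eps_1)}$-dilate of $P$. This holds because $\rho\geq 1$: the tubes $2T_1$ have dimensions $K^{1+O(\eps_1)}\times\cdots\times K^{2+O(\eps_1)}$, they are parallel to $P$, and they are no larger than $K^{O(\eps_1)}P$.

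Finally, cover $P'$ by $\less K$ cubes of side length $K\rho$, stacked along the long direction of length $\sim K^{2}\rho$. Then $|Y\cap P'|\less K\sup_{B_{K\rho}}|Y\cap B_{K\rho}|$. Combining this with the determinant factor $K^{-(d+1)}$ yields $\eta^{-1}K^{-d}\sup_{B_{K\rho}}|Y\cap B_{K\rho}|$, as claimed.

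The only delicate point is the bookkeeping of the $K^{O(\eps_1)}$ losses from the dilation $2T_1$ and from the overlap of the tubes. These are all absorbed into $\less$ under the convention of this section, and the restriction $\rho\geq1$ is exactly what makes the tubes $2T_1$ fit, up to such losses, inside dilates of $P$.
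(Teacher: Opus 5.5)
Your proof is correct and takes essentially the same route as the paper. You change variables using $|\det L_\tau| = K^{-(d+1)}$, then use $|2T_1\cap Y|\sim \eta|T_1|$ and the $\less 1$ overlap of the parallel tubes $2T_1$ to bound $|Y_\sq\cap P|$ by $\eta^{-1}|Y\cap P'|$, and finally cover the $K^{O(\eps_1)}$-dilate $P'$ by $\less K$ cubes of side $K\rho$. The only cosmetic difference is that you sum $|2T_1\cap Y|$ directly, whereas the paper counts the number $N$ of relevant tubes and bounds $N$ via the disjoint tubes contained in $2L_\tau^{-1}(B_\rho)$.
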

\begin{proof}
Note that $L_\tau^{-1}(B_\rho)$ is a parallelepiped  of dimensions $K\rho\times \cdots \times K\rho \times K^2 \rho$. Let $N$ denote the number of tubes $T_1\in \T_\sq$ such that $2T_1$ intersects $L_\tau^{-1}(B_\rho)$. Then we have
\begin{equation}\label{eqn:density}
	|L_\tau(Y_\sq) \cap B_\rho| =  |\det L_\tau| |Y_\sq \cap L_\tau^{-1}(B_\rho)| \leq  |\det L_\tau| |2T_1| N \less N
\end{equation}
since $|\det L_\tau| = K^{-(d+1)}$ and $|2T_1| \less K^{d+1}$.

If $2T_1$ intersects $L_\tau^{-1}(B_\rho)$, then it is contained in $2L_\tau^{-1}(B_\rho)$ which denotes the $K^{2\eps_1}$-dilate of $L_\tau^{-1}(B_\rho)$. Since the dilated tubes $\{2T_1\}$ have $\less 1$ overlap, $2 L_\tau^{-1}(B_\rho)$ contains at least $\gtrapprox N$ disjoint tubes $2T_1$ with $T_1\in \T_\sq$. Since $|Y\cap 2T_1| \sim \eta |T_1| \sim \eta K^{d+1}$ for $T_1\in \T_\sq$, we have 
\begin{equation}\label{eqn:N}
	\eta K^{d+1} N \less | Y\cap  2 L_\tau^{-1}(B_\rho) |. 
\end{equation}
On the other hand, by covering the parallelepiped $2 L_\tau^{-1}(B_\rho)$ by $\less K$ cubes of side-length $K\rho$, 
\[ | Y\cap  2 L_\tau^{-1}(B_\rho) |  \less  K \sup_{B_{K\rho}} |Y\cap B_{K\rho}|.\]
Combined with \eqref{eqn:density}  and \eqref{eqn:N}, this estimate yields
\[ \eta K^{d+1}	|L_\tau(Y_\sq) \cap B_\rho| \less  K \sup_{B_{K\rho}} |Y\cap B_{K\rho}|, \]
from which the claim follows.
\end{proof}

In particular, by the lemma, we have 
\[ 	\sup_{B_{R_1^{1/2}}} |L_\tau(Y_\sq) \cap B_{R_1^{1/2}}| \less 	\eta^{-1}  K^{-d} \sup_{B_{R^{1/2}}} |Y\cap B_{R^{1/2}}|.\]

Moreover, since $\rho \leq R_1^{1/2} \iff K\rho \leq R^{1/2}$, we have 
\begin{align*}
	\gamma_{\alpha, R_1^{1/2}} \big(L_\tau(Y_\sq) \big) &\less \eta^{-1} K^{\alpha-d}   \gamma_{\alpha,R^{1/2}}(Y) \\
	\overline{\gamma_{\beta, R_1}} \big(L_\tau(Y_\sq)\big) &\less \eta^{-1} K^{\beta-d} \overline{\gamma_{\beta, R}}( Y).
\end{align*}
Combining these bounds, we obtain
\[
	\mathcal{A}_{d,m,\alpha,\beta}(R_1, L_\tau(Y_\sq))  \less \eta^{-(m+1)} K^{-\beta+\alpha+m-d}  \mathcal{A}_{d,m,\alpha,\beta}(R, Y). 
\]
By the assumption $\beta \geq \alpha + m - d$, we have $K^{-\beta+\alpha+m-d}  \leq 1$. Thus, by   \eqref{eqn:induction}, we have 
\begin{align*}
	\frac{\| f_\sq \|_{L^2(Y_\sq)} }{\| f_\sq \|_{L^2}}
	&\less K^{-2\eps}  C_\eps R^\eps E^{\frac{d-m}{m+1}}  \eta^{-\frac1 m} \mathcal{A}_{d,m,\alpha,\beta}(R, Y)^{\frac{1}{m(m+1)}}.
\end{align*}
Recall that $\less K^{-2\eps}$ denotes $\leq c_{\eps_1} K^{O(\eps_1)} K^{-2\eps}$. Since  we  assume that $\eps_1$ is sufficiently small relative to $\eps$ and  $R\gtrsim_\eps 1$ is sufficiently large, we may assume that 
\[ c_{\eps_1} K^{O(\eps_1)} K^{-2\eps}  \leq  c_{\eps_1}	K^{-\eps} \leq 10^{-d}. \]
Therefore, summing the  square of the estimate on $\| f_\sq \|_{L^2(Y_\sq)}$ over $\sq$ using \eqref{eqn:decK} and \eqref{eqn:L2ortho} yields 
\begin{align*}
	\| f\|_{L^2(Y)} &\leq   C_\eps R^\eps E^{\frac{d-m}{m+1}}  \mathcal{A}_{d,m,\alpha,\beta}(R, Y)^{\frac{1}{m(m+1)}} 
	\| f\|_{L^2},
\end{align*}
which closes the induction.

\newcommand{\etalchar}[1]{$^{#1}$}

\end{document}